\numberwithin{equation}{section}
\theoremstyle{plain}
\providecommand{\U}[1]{\protect\rule{.1in}{.1in}}
\newtheorem{theorem}{Theorem}
\newtheorem{hypothesis}{Hypothesis}
\newtheorem{lemma}[theorem]{Lemma}
\newtheorem{proposition}[theorem]{Proposition}
\newtheorem{remark}[theorem]{Remark}
\def\P{\mathbb P}
\def\E{\mathbb E}
\def\N{\mathbb N}
\def\R{\mathbb R}
\def\ds{\displaystyle}
\def\L{\mathcal L}
\def\A{\mathcal A}
\def\H{\mathcal  H}
\def\W{\mathcal  W}
\begin{document}

\begin{frontmatter}
\title{  An optimal  regularity result
 for 
Kolmogorov equations 
  and weak uniqueness 
  for  some   critical 
  SPDEs  }
\runtitle{Some Critical SPDEs}
%\thankstext{T1}{Footnote to the title with the ``thankstext'' command.}

\begin{aug}
\author{\fnms{Enrico Priola } 
%\snm{Priola} 
} 
\thanks{The author  have been partially supported by   GNAMPA of the Istituto Nazionale di Alta  Matematica (INdAM).} 
%\thanksref{t1,t2,m1}\ead[label=e1]
%{first@somewhere.com}},
%\author{\fnms{Second} \snm{Author}\thanksref{t3,m1,m2}\ead[label=e2]{second@somewhere.com}}
%\and
%\author{\fnms{Third} \snm{Author}\thanksref{t1,m2}
%\ead[label=e3]{third@somewhere.com}
%\ead[label=u1,url]{http://www.foo.com}}

%\thankstext{t1}{Some comment}
%\thankstext{t2}{First supporter of the project}
%\thankstext{t3}{Second supporter of the project}
\runauthor{E. Priola}

\address{    Dipartimento di  
 Matematica,  
 \\
Universit\`a  di Pavia, Pavia, Italy
\\
enrico.priola@unipv.it
%\thanksmark{}
}

\end{aug}

\vskip 0.5 cm

\begin{abstract}
  We  show uniqueness in law for  the  critical  SPDE  
 $$
 dX_t = AX_t dt + (-A)^{1/2}F(X(t))dt +  dW_t,\;\;   
 X_0 =x \in H,
  $$ 
%related SDEs 
%written in mild form,  
where   $A$ $ : \text{dom}(A) \subset   H \to H$ is a  negative definite  self-adjoint operator on  a separable Hilbert space $H$ having $A^{-1}$ of trace class  and  $W$ is 
   a cylindrical Wiener process on  $H$. 
 Here 
{  $F: H \to H $  can be    continuous 
  with at most linear growth 
  %(possible extensions to  
  (some functions  
  %continuous and locally bounded 
  $F$ which grow more than linearly 
    can  also be considered)}. This leads to new uniqueness results for 
 % {\bf generalized}  
 % {  stochastic Burgers type} 
 generalized stochastic Burgers equations and for three-dimensional   stochastic Cahn-Hilliard type equations 
  which have interesting  applications.    
  To get weak uniqueness we use an infinite dimensional localization principle and also   establish  a new optimal regularity result for 
the   Kolmogorov equation   $ \lambda u -  Lu = f$  
 associated to the SPDE when $F=0$ ($\lambda >0$,   $ f: H \to {\mathbb R}$   Borel and bounded). In particular, we prove that 
  the first derivative $
 Du(x)$ belongs to $\text{dom}((-A)^{1/2})$, for any $x \in H,$ and 
$
\sup_{x \in H} |(-A)^{1/2}Du  (x)|_H 
$ $ =  \| (-A)^{1/2}Du \|_{0} 
 \le 
C \, \| f\|_{0}.
$
%{  Finally, we show how the theorem can be extended to %provide   extensions to treat 
%some locally bounded $F$}
%{  Classical stochastic Burgers and Cahn-Hilliard equations are not considered in this paper. 
%On the other hand 
%For such equations 
%Such equations verify 
%some  
%monotonicity type conditions 
%hold
%and strong uniqueness 
%is known.}
%holds.}
%has been  established. } 
%is  well-established.}
%by different techniques.}
%is  well-established.} 
\\ 
\end{abstract}

\begin{keyword}[class=MSC]
\kwd[Primary ]{60H15}
\kwd{35R60}
\kwd[; secondary ]{35R15}
\\ 
\end{keyword}

 \begin{keyword}
\kwd{Critical SPDEs}
\kwd{Weak  uniqueness in infinite dimensions}
\kwd{Optimal regularity for Kolmogorov 
  operators}
\end{keyword}     
 
\end{frontmatter}

\section{Introduction}

We establish weak uniqueness (or uniqueness in law)    for critical  stochastic evolution equations like 
% in a real separable Hilbert space $H$ of the form 
\begin{equation} \label{sde}
dX_t = AX_t dt + (-A)^{1/2}F(X_t)dt +  dW_t, \;\; X_0 =x \in H.
\end{equation}   
Here $H$ is a separable Hilbert space, $ A: D(A) \subset H \to H$ is a 
self-adjoint operator of negative type 
%with domain $D(A)$  
such that the inverse $A^{-1}$ is of trace class (cf. Section 1.1 and see also Remark \ref{serve}), $W = (W_t)$ is a cylindrical Wiener process on  $H$, cf.  \cite{DZ1},    \cite{DZ},  \cite{hairer}     and the references therein. We {  may}  assume that
%$F$ is continuous and has at most linear growth, i.e.,
 \begin{equation} \label{lin1}
F: H \to H \;\; \text{is continuous and verifies   } \;\; |F(x)|_H \le C_F (1 + |x|_H), \;\; x \in H,
\end{equation}  
 {  for some  constant $C_F >0$. This allows to prove  both weak existence and weak uniqueness for \eqref{sde}.
   Assumption \eqref{lin1} can be   relaxed if we assume  that  weak existence holds for \eqref{sde}; 
 %  We refer to  
 %for  a possibile generalization of this assumption 
  see Section 7 
  %for possible extensions 
  where we 
  % prove weak uniqueness when
 % prove a more general when
  %consider  cases where 
 % assuming that 
  consider  $F$ which is    continuous and  bounded on bounded sets of $H$.
 %for a more general result.
 }

\smallskip 
 
 Using the analytic  semigroup   $(e^{tA})$   generated by $A$ 
 we consider   mild solutions to \eqref{sde},  i.e., 
\begin{equation*}
  \label{mq1}
X_{t}=e^{tA}x+\int_{0}^{t}(-A)^{1/2}e^{(  t-s)  A}F  (
X_{s})
ds
+\int_{0}^{t}e^{(  t-s)  A}dW_{s},\;\;\; t \ge 0,
\end{equation*}
%where $(e^{tA})$ is the analytic  semigroup  generated by $A$ 
(cf. Section 1.1) and   prove
  the following result.
\begin{theorem}
\label{base} 
 Under Hypothesis \ref{d1} and assuming \eqref{lin1}, for any $x \in H$,  there exists a weak mild solution defined on some filtered probability space.  
 Moreover uniqueness in law (or weak uniqueness) holds for \eqref{sde}, for any $x \in H$.     
\end{theorem}
Examples of SPDEs of the form \eqref{sde} are considered in Section 2. In particular, we can deal with  stochastic Burgers-type equations like  
 $$
d u (t, \xi)=   \frac{\partial^2}{\partial   \xi^2}  u(t, \xi) {   dt} +   \frac{\partial }{\partial \xi} {  h( } u(t, \xi)){   dt}  + dW_t(\xi), \;\; u(0, \xi) = u_0(\xi), \;\;\; \xi \in (0,\pi),
$$ 
 with suitable boundary conditions (cf. \cite{Gy}, \cite{D2} and \cite{RS}) and stochastic Cahn-Hilliard equations (cf. \cite{EM}, \cite{DD}, \cite{NC}, \cite{ES}) like   
$$
d u (t, \xi)= -  \triangle^2_{\xi} u(t, \xi){   dt} + \triangle_\xi {  h( }u(t, \xi)){   dt} + dW_t(\xi), \;\;t>0,\;\;  u(0, \xi) = u_0(\xi)\;\; \text{on $G$}, 
$$
 with suitable boundary conditions ($G \subset \R^3$ is a regular bounded open set).
 %{  In the classical case, i.e. ??? strong uniqueness hold by monotone %method}
 We  prove {\sl weak well-posedness} for both SPDEs when    {  $h$} is continuous and has at most a linear growth {  (see Section 2).} 
  {  Such assumption  does not cover classical stochastic  Burgers equations (i.e., ${   h( }u) = \frac{u^2}{2}$) and stochastic Chan-Hiliard equations (i.e., ${   h( }u) = u^3 -u$) for which strong existence and uniqueness can be proved by different methods (cf. \cite{brezniak} and \cite{DD}).
  On the other hand, in Section 7 we  consider   some    perturbations of classical Burgers equations (cf. Propositions \ref{dap} and \ref{well1}).}

 We  mention that in  \cite{Za},  \cite{bass} and \cite{bass1}     weak uniqueness has been investigated for stochastic evolutions equations   
with H\"older continuous  coefficients  and non-degenerate  multiplicative noise when $(-A)^{1/2} F$ is replaced by $F$.
 On the other hand, 
 weak uniqueness for \eqref{sde} follows by  Section 4 of  \cite{D2} assuming  that $F$ is $\theta$-H\"older continuous and bounded, $\theta \in (0,1)$,  
$
 \text{with} \; \| F\|_{C_b^{\theta}(H,H)} \; $ $\text{small enough. }
$
%is a consequence of the results in 
     To prove weak uniqueness  for \eqref{sde} we first  establish   
a new  optimal regularity result 
 for the infinite-dimensional Kolmogorov equation  
\begin{equation*}
\lambda u (x) - Lu (x)   = f (x),\;\; x \in H. 
\end{equation*}
where $\lambda >0$, $f: H \to \R$ is a given Borel and bounded function and 
 $L$ is  an infinite-dimensional Ornstein-Uhlenbeck operator   which is  formally given by 
\begin{equation*}
%\label{dd}
 L g(x) = \frac{1}{2} \mbox{Tr}(D^2 g(x)) +
 \langle Ax ,  Dg (x)\rangle, \;\; x \in D(A),
\end{equation*}
where $D g(x)$ and $D^2 g(x)$ denote respectively the
first and second Fr\'echet   derivatives of a regular function $g$ at $x \in H$  and $\langle \cdot, \cdot \rangle$ is the inner product in $H $ {  
(for regularity results concerning $L$  when $H = \R^n$ see \cite{lorenzi} and the references therein).} 
According to Chapter 6 in \cite{DZ1} (see also \cite{D2} and     \cite{DFPR})  we investigate properties of  the bounded  solution $u : H \to \R$,
%  given by 
 \begin{equation}\label{dd9}  
u(x)= \int_0^{\infty} e^{-\lambda t } P_t f(x)dt, \;\;\; x \in H;
 \end{equation}
here $(P_t)$ is the Ornstein-Uhlenbeck semigroup associated to $L$.    One has
$
P_tf(x) $ $= \E[f(Z_t^x)] $ $ = \E \Big[f(e^{tA} x$ $ + \int_0^t e^{(t-s)A}  dW_s)\Big]
$;
    $Z^x$ denotes the Ornstein-Uhlenbeck process which  solves \eqref{sde}
 when $F=0$ (cf. Section 1.2).   
      It easy to prove that  $u \in C^1_b(H) $, i.e., $u$ is continuous and bounded with  the first {   Fr\'echet} derivative $Du : H \to H$ which is continuous and bounded.   
 
 The new regularity result  we prove is  that  $Du(x) \in D((-A)^{1/2})$, for any $x \in H,$   and 
\begin{equation}\label{ss9}         
  \sup_{x \in H}| (-A)^{1/2} Du (x)|_H \, \le \frac{\pi} {\sqrt 2}\,  \sup_{x \in H}| f(x)|_H, 
\end{equation} 
 see Theorem \ref{ss13} with $z=0$ and compare with \cite{bass}, \cite{CL}, \cite{D2} and \cite{LR}.  
  Note that \eqref{ss9}   is a limit case of  known estimates. Indeed  if $\theta \in (0,1)$, and $f : H \to \R $ is $\theta$-H\"older continuous and bounded then 
\begin{equation}\label{ssp} 
 \begin{array}{l}
   \| (-A)^{1/2} Du \|_{C^{\theta}_b(H, H)} \le  c_{\theta} \| f\|_{C_b^{\theta}(H)}
\end{array}    
   \end{equation} 
is the main result in \cite{D2}.  Similar  regularity results  have  been already proved in $L^p(H, \mu)$-spaces with respect to the Gaussian invariant measure $\mu$ for $(P_t)$ (cf. Section 3 of \cite{CG1}):  
      \begin{equation} 
 %     \label{e11cg} \| u \|_{L^p(\mu)}^p +  \int_H \|D^2 u
%(x)\|_{}^p \, \mu (dx)
%+
\|(-A)^{1/2}D u\|_{L^{p}(\mu)} \le C_p \|f\|_{L^p(\mu)},    
\;\;\; 1 < p < \infty.
\end{equation}
Hence estimate \eqref{ss9} corresponds to the remaining  case $p=\infty$. 
%  Recall that 
%  when $f \in L^2(\mu)$ one has  $u \in W^{2,2}(H, \mu)$, $(-A)^{1/2}D u \in %L^2(H, \mu; H)$ and  
%integrating by parts  with regular functions 
%$v$, see 
%\begin{gather*}
% \int_H  (L_2  u (x))^2 \mu (dx) = \frac{1}{2} \int_H  {\text Tr}[D^2 %u%%
%(x))^2] \mu (dx) 
%+ \int_H  |(-A)^{1/2}D u(x) |^2_H \mu (dx). 
%\end{gather*}
 We stress that when   $f \in L^2(\mu)$ the fact that  the estimate  $\|(-A)^{1/2}D u\|_{L^{2}(\mu)} \le C_2 \|f\|_{L^2(\mu)}$ is sharp follows by   
  Proposition 10.2.5 in \cite{DZ1}.  
%This
%shows that   $\|(-A)^{1/2}D u\|_{L^{2}(\mu)} \le C \| f\|_{L^{2}(\mu)}$ is %%sharp.
  %  The optimality of \eqref{ss9}  is also clear  by  singular   gradient   
% estimate 
{    Moreover, the  bound 
  \begin{equation} \label{ww5}
  \begin{array}{l}
   \sup_{x \in H} | (-A)^{1/2} D P_t f (x) |
    = 
\| (-A)^{1/2} D P_t f\|_0 \sim \frac{ C_1}{t} \| f\|_0, \;\;\text{as} \;\;  t \to 0^+
\end{array}  
\end{equation}
 ($C_1$ is given in \eqref{e5}) 
 %it is  independent of $A$)
 containing 
 %the non-integrable function 
 the singular term $\frac{1}{t}$ 
 %gives an indication 
 %indicates 
 suggests  
 that \eqref{ss9}
 %is optimal. 
 cannot be    improved
 replacing $(-A)^{1/2}$ by  $(-A)^{\gamma}$, $\gamma \in (\frac{1}{2}, 1)$; 
  %(on this respect, 
 see  also 
 %the results in 
 %\cite{analityc} and 
 Chapter 6 of \cite{DZ1} and Remark \ref{mai}.} 
 %and Remark \ref{mai} below). 

Theorem   \ref{ss13} is deduced by the crucial Lemma \ref{ss1}; the proof of such lemma 
%is quite involved and  
uses the diagonal structure of $A$. By Lemma \ref{ss1} we also       
%\vskip 5mm   We also mention that 
%when $\gamma \in (0,1/2)$ an analytic approach to .. is presented in \cite{EF}
% They study properties of the correspoingin Markov semigroup when
%  $F \in C^{\theta}_b(H,H)$.... 
%  ??
%\smallskip
  derive  another new   regularity result (cf. Theorem \ref{sr} with $z=0$):  
 \begin{equation}\label{ss4} 
\| P_s  Du -  Du \|_0 \le c \, s^{1/2}\| f\|_0,\;\;\; s \in [0,1].
\end{equation}
  In Appendix we show that   \eqref{ss4} implies  the ${\cal C}^1$-Zygmund regularity of $Du$;
  such  Zygmund regularity of $Du$  has been   obtained in \cite{CL} and \cite{LR} by  different methods.    
%%%  
\def\ciao1{  
 The following  extension  to $L^p(\mu)$, $p>1$, can be found in
Section 3 of \cite{CG1}.    taglialo   ???
%$L_p$ will denote the
%generator of the Ornstein-Uhlenbeck semigroup in $L^p(H, \mu).$ 
 \begin{theorem}
\label{t1cg}
Let $\lambda>0$,    $f\in L^p(H,\mu)$ and let $\varphi\in D(L_p)$ be the solution of the equation
\begin{equation}
\label{e10cg}
\lambda\varphi-L_p\, \varphi=f.
\end{equation}
Then $\varphi\in W^{2,p}(H,\mu)$, $(-A)^{1/2}D\varphi\in
L^{p}(H,\mu;H)$ and there exists  a constant $C= C(\lambda, p)$ such  
that
\begin{equation}
\label{e11cg} \|\varphi\|_{L^p(\mu)} + \Big ( \int_H \|D^2 \varphi
(x)\|_{}^p \, \mu (dx)\Big)^{1/p}
+\|(-A)^{1/2}D\varphi\|_{L^{p}(\mu)}\le C\|f\|_{L^p(\mu)}.
\end{equation}
 \end{theorem}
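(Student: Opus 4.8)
The plan is to use the diagonal structure of $A$ to reduce the maximal regularity estimate to dimension-free Gaussian Riesz transform bounds, settling $p=2$ by an explicit Hermite computation and then invoking the Littlewood--Paley--Stein machinery for the symmetric Markov semigroup $(P_t)$. Writing $Ae_k=-\alpha_k e_k$ with $0<\alpha_1\le\alpha_2\le\cdots$, $\alpha_k\to\infty$ and $\sum_k\alpha_k^{-1}<\infty$, the invariant measure is $\mu=N(0,\tfrac12(-A)^{-1})$; in the standardized coordinates $\xi_k=\sqrt{2\alpha_k}\,\langle x,e_k\rangle$ the measure $\mu$ becomes the standard product Gaussian and one computes $-L=\sum_k\alpha_k N_k$, where $N_k$ is the one-dimensional number operator in the $k$-th variable. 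Thus $-L$ is diagonalized by the Hermite basis $(H_\beta)$ with eigenvalues $\langle\alpha,\beta\rangle=\sum_k\alpha_k\beta_k$, and $(P_t)$ is a $\mu$-symmetric, positivity preserving contraction semigroup on every $L^p(\mu)$. Since $\varphi=\int_0^\infty e^{-\lambda t}P_t f\,dt=(\lambda-L_p)^{-1}f$, contractivity of $(P_t)$ gives $\|\varphi\|_{L^p(\mu)}\le\lambda^{-1}\|f\|_{L^p(\mu)}$, which disposes of the zeroth-order term.

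First I would settle $p=2$ by hand, since this identifies the right target operators. Using $\big((-A)^{1/2}DH_\beta\big)_k=\sqrt2\,\alpha_k\beta_k H_{\beta-e_k}$ and $\|H_{\beta-e_k}\|^2=\|H_\beta\|^2/\beta_k$, and noting that the vectors $(-A)^{1/2}DH_\beta$ are mutually orthogonal in $L^2(\mu;H)$, one gets
\[
\|(-A)^{1/2}D H_\beta\|_{L^2(\mu)}^2=2\Big(\sum_k\alpha_k^2\beta_k\Big)\|H_\beta\|^2\le 2\,\langle\alpha,\beta\rangle^2\,\|H_\beta\|^2=2\,\|(-L)H_\beta\|_{L^2(\mu)}^2,
\]
since $\sum_k\alpha_k^2\beta_k\le(\sum_k\alpha_k\beta_k)^2$. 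Summing over $\beta$ gives $\|(-A)^{1/2}D\varphi\|_{L^2(\mu)}\le\sqrt2\,\|(-L)\varphi\|_{L^2(\mu)}\le 2\sqrt2\,\|f\|_{L^2(\mu)}$, and an analogous diagonal computation controls $\|D^2\varphi\|_{L^2(\mu)}$. To pass to $1<p<\infty$ I would treat $(-A)^{1/2}D(\lambda-L)^{-1}$ and $D^2(\lambda-L)^{-1}$ as second-order Gaussian Riesz transforms and prove their $L^p(\mu)$-boundedness via Stein's Littlewood--Paley--Stein theory for the symmetric diffusion semigroup $(P_t)$ together with the (weighted) Meyer equivalence of norms; here the commutation relation $DL=LD+AD$ (equivalently the extra factor $e^{tA}$ in $DP_t$) is what lets the square-function analysis absorb the $(-A)^{1/2}$-weight. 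This yields $\|D^2\varphi\|_{L^p(\mu)}+\|(-A)^{1/2}D\varphi\|_{L^p(\mu)}\le C\|f\|_{L^p(\mu)}$, and $\varphi\in W^{2,p}(H,\mu)$ follows because $D\varphi=(-A)^{-1/2}\big((-A)^{1/2}D\varphi\big)$ with $(-A)^{-1/2}$ bounded.

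The main obstacle is exactly the passage from $p=2$ to general $p$: the Gaussian Riesz transforms are singular integral operators whose elementary Calder\'on--Zygmund kernel estimates degenerate, so one must obtain constants \emph{independent of the dimension} in order for the bound to survive the infinite-dimensional limit $L^{(n)}=\sum_{k\le n}\alpha_k N_k\to L$. I would secure these dimension-free constants either through Pisier's martingale/semigroup transform argument or, equivalently, through the bounded $H^\infty$-functional calculus (and bounded imaginary powers) of $-L$ on $L^p(\mu)$ for $1<p<\infty$; the trace-class hypothesis $\sum_k\alpha_k^{-1}<\infty$ then guarantees convergence of the finite-dimensional approximations and the validity of the estimate on all of $H$. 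This dimension-free Littlewood--Paley--Stein analysis is precisely the content of \cite{CG1}, to which the detailed verification can be referred.
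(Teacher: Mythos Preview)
The paper does not actually prove this theorem: the statement sits inside a commented-out block (\texttt{\textbackslash def\textbackslash ciao1\{...\}}) and is introduced only as a citation, ``The following extension to $L^p(\mu)$, $p>1$, can be found in Section 3 of \cite{CG1}''. In the body text the result is likewise only referenced, not proved (``Similar regularity results have been already proved in $L^p(H,\mu)$-spaces \ldots\ (cf.\ Section 3 of \cite{CG1})''). So there is no proof in the paper to compare against.

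Your sketch is a faithful outline of the strategy actually carried out in \cite{CG1}: diagonalize $-L$ on the Hermite basis so that the $p=2$ case is an explicit computation, then upgrade to $1<p<\infty$ via dimension-free $L^p$-bounds for the Gaussian Riesz transforms associated to the symmetric diffusion semigroup $(P_t)$ (Meyer's inequalities / Pisier's semigroup argument / bounded $H^\infty$-calculus of $-L$), and pass to the limit from the finite-dimensional projections $L^{(n)}$. The one point you gloss over is that the $(-A)^{1/2}$-weighted first-order term is not literally a standard Meyer--Riesz transform; in \cite{CG1} the commutation identity $D_kP_t=e^{-\alpha_k t}P_tD_k$ is used to rewrite $(-A)^{1/2}D(\lambda-L)^{-1}$ as a genuine semigroup-type singular integral to which the Littlewood--Paley--Stein machinery applies, and the $D^2$-term is handled by the second-order Meyer inequality. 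Your reference to ``the commutation relation $DL=LD+AD$ (equivalently the extra factor $e^{tA}$ in $DP_t$)'' is exactly the right hook for this, so the sketch is correct in spirit, but since the paper itself defers entirely to \cite{CG1} there is nothing further to compare.
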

 } 
 %%%%%%%%%%%%%
 
Concerning the SPDE \eqref{sde}  we first prove   the weak existence in Section 4 (see also Remark \eqref{sd}).
To this purpose we adapt a compactness argument already used in \cite{GG}  (see also Chapter 8 in \cite{DZ}).
The proof of the uniqueness part of Theorem \ref{base} is more involved and it is done in various steps (see Sections 5 and 6). In the case when $F \in C_b(H,H)$ we first  consider   equivalence between mild solutions and solutions to the martingale problem  of Stroock and Varadhan \cite{SV79}     (cf. Section 5.1). This allows to use some  uniqueness results available for the martingale problem (cf. Theorems \ref{ria}, \ref{uni1} and \ref{key}).  On this respect we point out that an infinite-dimensional generalization of the martingale problem is   given in Chapter 4 of \cite{EK}. 
  
   In Section 5.3 we prove   weak uniqueness assuming that 
  there exists   $z \in H$  such that 
  \begin{gather} \label{aa}
\sup_{x \in H}| F(x)- z |_H\,  < 1/4.
\end{gather}
 To this purpose we need a careful  analysis of the Kolmogorov equation 
\begin{gather*}
  \lambda u -Lu -\langle z, (-A)^{1/2} D u 
\rangle= f  + \langle F - z, (-A)^{1/2} D u 
\rangle
\end{gather*} 
 under the condition \eqref{aa} (see Section 5.2). This is based on the fact that the same   estimate  \eqref{ss9} holds more generally  if  $u$ is replaced by
 the       solution  $     u^{(z)}$ of the following equation
 \begin{equation} \label{sdd}
  \lambda u -Lu -\langle z, (-A)^{1/2} D u 
\rangle= f,   
\end{equation} 
 for any $z \in H$ (cf. Theorem \ref{ss13}).   
  In Section 5.4 we prove uniqueness in law when $F \in C_b (H,H)$ (removing condition \eqref{aa}). To this purpose we also adapt the localization principle which has been introduced  in \cite{SV79} (cf. Theorem \ref{uni1}). 
  In Section 6 we complete the proof of Theorem \ref{base}, showing weak     uniqueness under  \eqref{lin1}. To this purpose 
   we truncate $F$ and 
   prove uniqueness  for the martingale problem up to a stopping time (cf. Theorem  \ref{key}). {  Section 7 considers  the case of $F$ which is  continuous and locally bounded.} 
   %the case of continuous and locally bounded 
%   \\   
%  We stress  that  further results
%are available in a preliminary preprint version  of this paper (cf. %\cite{Prr}).    
 
  We finally mention 
that recent papers 
  investigate  pathwise uniqueness for  SPDEs like \eqref{sde}
%\begin{equation*} \label{s22}
% dX_t = AX_t dt + F(X(t))dt +  dW_t,\;\;  
% X_0 =x \in H,
% \end{equation*}
 when  $(-A)^{1/2}F$ is replaced by a measurable drift term  $F$ 
 %and also a measurable drift $F$ can be considered     
 (cf. \cite{DFPR}, \cite{DFRV}  and  also \cite{mytnik} for the case of semilinear stochastic heat equations and see the references therein). 
%In such papers $F $ can be bounde. 
%In contrast to finite dimension (see..)
 For such equations in infinite dimensions    even if  $F \in C_b(H,H)$   pathwise uniqueness, for any initial  $x \in H$,
 %for  \eqref{s22} 
 is still not clear
  (however   
 pathwise uniqueness holds for $\mu$-a.e.  $x \in H$). 
 %??? On the other hand if $F \in C_b^{\theta}(H,H)$ then pathwise %uniqueness holds, for any $x \in H$, see 
 %Finally an expanded versione of the present paper with more details is 

\begin{remark} \label{mai}   {\em      It is not clear if our  uniqueness result holds for \eqref{sde} when $(-A)^{1/2}$ is replaced by $(-A)^{\gamma}$, $\gamma \in (1/2,1)$.
 We believe that  for  $\gamma \in (1/2,1)$ there  should exist a continuous and bounded drift $F_{\gamma}: H \to H$ and $x_{\gamma} \in H$ such that weak uniqueness fails for  
  $dX_t = AX_t dt $ $+ (-A)^{\gamma}F_{\gamma} (X_t)dt +  dW_t,$ $  X_0 =x_{\gamma} $ (on the other hand, weak existence holds, cf. Remark \ref{sd}). 
 In this sense \eqref{sde}  can be considered   as a  critical SPDE.}
\end{remark}

\subsection{ Notations and preliminaries}

Let $H$ be a   real separable Hilbert space.  Denote its
norm and inner product  by $\left\vert \cdot \right\vert_H $
 and 
$\left\langle \cdot , \cdot \right\rangle $ respectively. Moreover  ${\mathcal B}(H)$ indicates its Borel $\sigma$-algebra.  
 Concerning \eqref{sde} 
 as in \cite{D2}, \cite{DFPR} and  \cite{DFRV} 
 we  assume  
 %  (cf. \cite{D2}, \cite{DFPR}, \cite{DFRV})
\begin{hypothesis} \label{d1} 
 $A:D(A)\subset H\to H$ is a negative definite
self-adjoint operator with domain $D(A)$ (i.e., there exists $\omega >0$ such that  $\langle Ax, x\rangle \le - \omega |x|^2_H$, $x \in D(A)$). Moreover
 $A^{-1}$ 
 is a trace class operator.
 \end{hypothesis} 
In the sequel we will concentrate on an infinite dimensional Hilbert space  $H.$ 
 Since $A^{-1}$ is compact, there exists an orthonormal basis $(e_k)$
in $H$ and an infinite sequence of positive numbers $(\lambda_k)$ such that
 \begin{equation} \label{e1a}
 \begin{array}{l}
  Ae_k=-\lambda_k e_k,\quad k\ge 1,\;\; \text{and } \;\; \sum_{k \ge 1} {\lambda_k^{-1}} < \infty.
\end{array}
\end{equation} 
% Moreover $\lambda_k \to \infty$ 
We denote by  ${\mathcal L}(H)$  the Banach space of  bounded and linear operators $T: H \to H$ endowed with the operator norm $\| \cdot \|_{\mathcal  L}.$  
  The operator $A$ generates an analytic semigroup $(e^{tA})$ on $H$
such that $e^{tA} e_k = e^{- \lambda_k t } e_k$, $t \ge 0$.  
 Remark that  
\begin{equation} \label{ewd}
\begin{array}{l}
 \|  (-A)^{1/2} e^{tA} \|_{\cal L} = \sup_{k \ge 1}\, \{  (\lambda_k)^{1/2} e^{-\lambda_k t} \} \le \frac{c}{ \sqrt{t}},\;\;\; t>0,
\end{array}   
 \end{equation}
with  $c= \sup_{u \ge 0} u e^{-u^2}$.  We will also use orthogonal projections with respect to $(e_k)$:
\begin{equation} \label{pp1}
 \begin{array} {l}
\pi_m=\sum_{j=1}^me_j\otimes e_j, \;\;\; 
\pi_m x = \sum_{k=1}^m x^{(k)} e_k, \;\; \text{where $x^{(k)} = \langle x, e_k\rangle $, $x \in H$, $m \ge 1$. }      
 \end{array}
\end{equation} 
   Let $(E, |\cdot |_E)$ be a real separable Banach space. We denote by 
${B}_b(H, E)$
  the Banach space of all real, bounded and Borel functions on
  $H$ with values in $E$, endowed with the supremum norm $\| f \|_0 = \sup_{x \in H} |f(x)|_E$, $f \in {B}_b(H, E).$ Moreover 
  $C_b(H, E) \subset  B_b(H, E)  $ indicates the subspace of all   bounded and continuous  functions.
 We denote by $C^{k}_b (H,E) \subset {B}_b(H, E)$, $k \ge
1$, the  space of all functions $f: H \to E$ which are bounded
and Fr\'echet differentiable on $H$ up to the order $k \ge 1$ with
all the derivatives $D^j f$ bounded and continuous on $H$, $1 \le j \le k$. 
 We also set $B_b(H) = B_b(H,\R)$, $C_b(H) = C_b(H, \R)$ and $C^{k}_b (H, \R)
= C^{k}_b (H)$. 
 
\smallskip   
We  will deal with the SPDE \eqref{sde} 
%\begin{equation} \label{uno1}
%dX_{t}=AX_{t}dt +  (-A)^{1/2}F(X_{t})dt+ dW_{t},\qquad X_{0}=x\in H,
%\end{equation}
 where  $W = (W_t)$ $= (W(t))$ is a {\sl cylindrical Wiener} process on $H$.  Thus   
  $W$ is formally given by ``$W_t = \sum_{k \ge 1}
  W^{(k)}_t e_k$'' where  $(W^{(k)})_{k \ge 1}   $   are independent real Wiener processes and   $(e_k)$ is the basis of eigenvectors  of $A$ (cf. \cite{DZ1},  \cite{hairer} and \cite{DZ}).   
 The next definition is meaningful for $F: H \to H$ which is only continuous  because of \eqref{ewd}. 
  
 % Recall that we also assume \eqref{lin1}.   
% We assume that    
%\begin{equation*} \label{d11}
% \text {  $F : H \to H$ is   continuous and } \;\; 
% |F(x)|_H \le C_{F}  (1+  |x|_H),\;\;\; x \in H.
%\end{equation*}
  %Let $x \in H$.
  
 \vskip 1mm 
  A  { \sl  weak mild solution}  to 
(\ref{sde})    is a sequence $( 
\Omega,$ $ {\mathcal F},
 ({\mathcal F}_{t}),$ $ \P, W, X ) $, where $(
\Omega, {\mathcal F},$ $
 ({\mathcal F}_{t}), \P )$ is a  filtered probability space  
  on which it is defined a
cylindrical Wiener process $W$ and
 an ${\cal F}_t$-adapted,   $H$-valued
continuous process $X$ $ = (X_t)$ $ = (X_t)_{t \ge 0}$ such that, $\P$-a.s., 
 \begin{equation}
  \label{mqq}
X_{t}= \,  e^{tA}x \, + \, \int_{0}^{t}(-A)^{1/2}e^{(  t-s)  A}F  (
X_{s})
ds
+\int_{0}^{t}e^{(  t-s)  A}dW_{s},\quad  t \ge 0.
\end{equation}
(hence  $X_0 =x$, $\P$-a.s.). 
 We say that  {\sl uniqueness in law holds for \eqref{sde} for any $x \in H$ } if  given two weak mild solutions   $X$ and $Y$     (possibly defined on different filtered probability spaces and starting at $x \in H$), we have that  $X$ and $Y$ have the same law on ${\cal B}(C([0, \infty); H))$ which is the Borel $\sigma$-algebra of $C([0, \infty); H)$  (this is the  Polish    space of  all continuous functions  from $[0, \infty)$ into $H$ endowed with the metric of the uniform convergence on bounded intervals; cf. \cite{KS} and  \cite{DZ}).
 %\cancel{\color{green} Equation \eqref{mqq} is  meaningful because   of  %\eqref{ewd}.}
  Note that   the stochastic convolution
$$    
  \begin{array} {l}
W_A(t) =\int_{0}^{t}e^{\left(  t-s\right)  A}dW_{s} = \sum_{k \ge 1}
\int_0^t e^{-(t-s) \lambda_k}e_k dW^{(k)}(s)
\end{array} 
$$
is well defined since 
 the series converges in $L^2(\Omega; H)$, for any $t \ge 0$. Moreover $W_A(t)$ is a Gaussian random variable with values in $H$ with distribution $N(0,Q_t)$
 where 
 \begin{equation}\label{qt1}
 \begin{array} {l}
Q_t = \int_0^t e^{2 sA} ds =  (-2 A)^{-1}(I-e^{2tA}),\;\;\; t \ge 0,
\end{array}
\end{equation}
 is   the covariance operator
(see also \cite{D2}).
 %%% correzione
Note that $ W_A(t) $ has a continuous version with values in $H$ (see Corollary 2 in  \cite{Tala});   if we assume in addition that $(-A)^{-1 + \delta}$ is of trace class, for some $\delta \in
(0,1)$, then  this fact follows by Theorem 5.11 in \cite{DZ}.

Equivalence between {   different} notions of solutions for \eqref{sde}  
  are clarified  in \cite{DZ1} and \cite{hairer} (see also  \cite{kunze} for a more general setting).    If we write  $ X^{(k)}(t)=  X^{(k)}_t = \langle  X(t), e_k \rangle $, $k \ge 1$, \eqref{sde} is equivalent to the system 
\begin{gather} \label{d33}
   X^{(k)}_t =  x^{(k)} -    \lambda_{k} \int_0^t   X^{(k)}_s ds 
  \, + \, \lambda_{k}^{1/2} \, \int_0^t F^{(k)}( X_s )ds + W_{t}^{(k)},\;\;\; k \ge 1,
\end{gather}     
or to 
 $ \ds  
X^{(k)}_t =  e^{- \lambda_k t}  x^{(k)}    +
  \int_0^t e^{-\lambda_k (t-s)}(\lambda_k)^{1/2}F^{(k)}(X_s)  ds +  \int_0^t e^{-\lambda_k (t-s)} d W^{(k)}_s,
$ 
for $k \ge 1$, $t \ge 0$, with 
$
F(x) = \sum_{k \ge 1}   F^{(k)}(x)e_k, \;\;\; x \in H.   
$

 We will also use the natural filtration of $X$ which is  denoted by  $({\cal F}_t^X)$; 
  ${\cal F}_t^X = \sigma(X_s \, : \, 0 \le s \le t)$ is the  $\sigma$-algebra generated by the r.v. 
 $X_s$, $0 \le s \le t$ (cf. Chapter 2 in \cite{EK}).
\begin{remark} \label{serve} {\em   We point out that Theorem \ref{base}  
   holds under the following  more general hypothesis:
$A:D(A)\subset H\to H$ is self-adjoint, $\langle A x,x  \rangle \le 0$, $x \in D(A)$, and
   $(I -A)^{-1 }$ 
is  of  trace class, with $I = I_H$.
 Indeed in this case  one can  rewrite  equation \eqref{sde} in the form
 $$ 
d X_{t}=(A -I) X_{t}  dt \, 
+ \,  (I - A)^{1/2} [(I -A)^{-1/2} X_t     + (-A)^{1/2} (I -A)^{-1/2}  F(X_{t})] dt \, + \, dW_{t},
$$ 
$ X_{0}=x. $  Now the linear operator   
  $\tilde A = I-A$ and the nonlinear term   $\tilde F(x)=[(I -A)^{-1/2} x         + \, (-A)^{1/2} (I -A)^{-1/2}  F(x)],$
% $$
% \tilde F(x)=[(I -A)^{-1/2} x         + \, (-A)^{1/2} (I -A)^{-1/2}  %F(x)],
% $$
  $ x \in H,$ verify    Hypothesis \ref{d1} and condition    \eqref{lin1} respectively.  
 }
\end{remark}

\subsection {A generalised Ornstein-Uhlenbeck semigroup}    
  
Let us fix $z \in H$.  We will consider  generalised Ornstein-Uhlenbeck operators like  
\begin{equation} \label{ou3}
L^{(z)} g(x) = \frac{1}{2} \mbox{Tr}(D^2 g(x)) +
 \langle x , A Dg (x)\rangle +  \langle z , (-A)^{1/2} Dg (x)\rangle , \; x \in H, \;\; g \in C^2_{cil}(H).
\end{equation}
Here $C^2_{cil}(H)$ denotes the space of {\sl regular 
cylindrical functions.} We say that  $g: H \to \R$ 
   belongs to $C^2_{cil}(H)$ if there exist elements $e_{i_1}, \ldots, e_{i_n}$ of the basis $(e_k)$ of eigenvectors of $A$ and a $C^2$-function  $\tilde g : \R^n \to \R$ with compact support such that 
\begin{gather} \label{cil2}
 g(x) = \tilde g (\langle  x, e_{i_1}\rangle, \ldots, \langle  x, e_{i_n} \rangle),\;\;\; x \in H.
\end{gather}  
By writing the stochastic equation $dX_t = AX_t dt +  (-A)^{1/2} z dt + dW_t,$ $ X_0 =x$ in mild form as  
$
X_t = e^{tA} x $ $+ \int_0^t e^{(t-s)A} dW_s $ $ +  \int_0^t (-A)^{1/2}e^{(t-s)A} z\,  ds, 
$
 one can easily check that the Markov semigroup associated to 
 $L^{(z)}$ is a  generalized Ornstein-Uhlenbeck semigroup $(P_t^{(z)})$:
\begin{gather}\label{gt}
 \begin{array}{l} \ds 
P_t^{(z)} f (x) \, 
%=\, \int_{H} f(e^{tA} x+ y + (-A)^{-1/2}[z- e^{tA}z] )  \; { N} \big (0 , %
% Q_t) \big)\, (dy),\;\; 
 =\, \int_{H} f(e^{tA} x+ y + \Gamma_t z  )  \; {    N  (0 , Q_t)} \, (dy),\;\; 
f \in { B}_b (H),\; x \in H,
\\   \nonumber 
 \text{ setting $\Gamma_t = (-A)^{1/2} \int_0^t e^{sA} ds, $  } 
 \;\;\;\;\; 
 \Gamma_t z = (-A)^{-1/2}[z- e^{tA}z] 
   = \sum_{k \ge 1} \, \frac {(1- e^{-t \lambda_k })}{(\lambda_k)^{1/2}} z^{(k)} \,  e_k. 
 \end{array} 
\end{gather} 
The  case $z=0$. i.e., $(P_t^{(0)})= (P_t)$ corresponds to the well-known Ornstein-Uhlenbeck semigroup (see, for instance, \cite{DZ1}, \cite{DZ}, \cite{D2}, \cite{DFPR} and \cite{DFRV}) which has  a unique invariant measure $\mu =N(0,S)$
where $S=-\frac12\;A^{-1}$. 
  It is also well-known (see, for instance, \cite{DZ1} and  \cite{DZ}) that under Hypothesis \ref{d1}, $(P_t)$ is strong Feller, i.e, $P_t (B_b(H)) \subset C_b(H)$, $t>0$.  
  Indeed we have $e^{tA}(H) \subset Q^{1/2}_t(H)$, $t>0$, or, equivalently,
\begin{equation}
\label{lam}  
\begin{array} {l} 
 \Lambda_t=Q_t^{-1/2}e^{tA}=\sqrt 2\;
(-A)^{1/2}e^{tA}(I-e^{2tA})^{-1/2} \in {\mathcal L}(H),\;\; t>0.
\end{array} 
\end{equation} 
Moreover $P_t (B_b(H)) \subset C_b^{k}(H)$, $t>0$, for any $k \ge 1$. Following the same proof of Theorem 6.2.2 in \cite{DZ1} one can show that 
 under Hypothesis \ref{d1}, for any $z \in H,$ we have 
$P_t^{(z)} (B_b(H)) \subset  C_b^{k}(H)$, $t>0$, for any  $k \ge 1$.
  Moreover, for any $f \in B_b(H)$, $t>0$, the following formula for the directional derivative along a direction $h$ holds: 
\begin{equation}
\label{e3} 
D_h P_t^{(z)} f(x) = 
\langle D P_t^{(z)} f(x),h
 \rangle  = \int_H \langle
  \Lambda_t h,Q_t^{-\frac12} y\rangle \, f (e^{tA}x+y+ \Gamma_t z) \mu_t(dy),
  \; x,  h \in H,
\end{equation}
 where $\mu_t = N(0,Q_t)$ (cf. \eqref{qt1}) and the mapping: $y \mapsto \langle
  \Lambda_t h,Q_t^{-\frac12} y\rangle$ is a centered
   Gaussian random variable
 on $(H, {\cal B}(H),\mu_t)$  with variance $ |\Lambda_t h|^2$
   (cf. Theorem 6.2.2 in \cite{DZ1}). 
   We have 
 \begin{equation} \label{e5}  
 %\label{ffd}
 \begin{array}{l}
 \ds   \Lambda_t e_k =\sqrt
2\;(\lambda_k)^{1/2}e^{-t\lambda_k}(1-e^{-2t\lambda_k})^{-1/2} e_k,
\\  \ds   
%\text{and so} 
\;\,  \| \Lambda_t \|_{\cal L}\le C_1
t^{-\frac{1}{2}}, \;\; t>0, \; \; C_1 = \sqrt
2 \cdot \sup_{u \ge 0} { [u \, e^{-u^2}}{(1- e^{-2u^2})^{-1/2}}].
\end{array}
\end{equation}    
 We deduce that,  for $t >0$, $g   \in B_b(H)$, $h,k \in H,$
\begin{equation}\label{wdc}
 \begin{array}{l}
 \| D_h P_t^{(z)} g\|_0 \le \frac{C_1}{\sqrt{t}}|h| \| g\|_0, \;\;\; 
 \| D^2_{hk} P_t^{(z)} g\|_0 \le \frac{\sqrt{2}\, C_1^2}{{t}} \| g\|_0 |h| \, |k|, 
\end{array} 
 \end{equation}
where $D_h P_t^{(z)} g= \langle D P_t^{(z)} g(\cdot),h \rangle $, $D^2_{hk} P_t^{(z)} g
 =  \langle D^2 P_t^{(z)} g(\cdot)h,k \rangle $.     
 
 \smallskip 
  To study equation \eqref{sdd} we will investigate regularity properties of the function
 \begin{equation}\label{wv}
u^{(z)}(x)= \int_0^{\infty} e^{-\lambda t }   P_t^{(z)} f(x)dt, \; \;\; x \in H,\; f \in B_b(H) 
\end{equation}
 (we drop the dependence of $u^{(z)}$ on $\lambda $); see  also the remark below. 
  
\begin{remark} \label{ss} {\em  Let us fix $z \in H$.   We point out that under Hypothesis \ref{d1}
 when  $f \in B_b(H)$ and $x \in H$,  the mapping:  $t \mapsto P_t^{(z)} f(x)$ is right-continuous and bounded on $(0, \infty)$ by the semigroup property and   the strong Feller property. Hence, for any $\lambda>0,$ $u^{(z)}: H \to \R$  given in \eqref{wv}   belongs to $C_b(H)$.    
%  we can consider, for any $\lambda>0,$  the continuous and bounded function $u^% 
% {(z)}: H \to \R$ given in \eqref{wv}. 
 \\
Moreover, also
the mapping: $t \mapsto D_h P_t^{(z)} f(x)$ is right-continuous  on $(0, \infty)$, for $x,h \in H
$. To check this fact let us fix $t>0$. Writing $D_{ h} {P_{t+s}^{(z)}}f(x) =   D_{ h} P_{s+ \frac{t}{2}}^{(z)} [P_{t/2}^{(z)} \, f](x)$, $s \ge 0,$ and using the strong Feller property we get easily the assertion.

Since  $\sup_{x \in H}|D P_t^{(z)} f(x)|_H \le \frac{c \| f\|_0}{\sqrt{t} }$, $t>0,$ differentiating under the integral sign, one  shows that there exists 
the directional derivative $D_h u^{(z)}(x)$ at any point $x \in H$ along any direction $h \in H$. 
    Moreover, it is not difficult to prove that  there exists  
the  first {   Fr\'echet}  derivative  $Du^{(z)}(x)$ at any $x \in H$  and  $Du^{(z)} : H \to H$ is continuous and bounded (cf. the proof of Lemma 9 in \cite{DFPR}).  Finally we have the formula
 \begin{equation}\label{uu1}  
\;\; \; D_h u^{(z)}(x) =   
 \int_0^{\infty} e^{-\lambda t } D_h P_t^{(z)} f(x)dt,\;\;\; x,h \in H
\end{equation} 
and the straightforward estimate $\| Du^{(z)} \|_0 \le c(\lambda) \| f\|_0$ with $c(\lambda)$  independent of $z \in H$. We will prove a  better  regularity result for $Du^{(z)}$ in Section 3.
}
\end{remark} 
 
 \begin{remark} \label{maa} {\em 
 In the final part of the  proof of Lemma \ref{ss1} we will  need to use  that
% the following result. For any $t>0$ we have 
%\begin{equation} \label{mas}
 $\Gamma_t (H ) \subset Q_t^{1/2} (H),$ $  t >0$  
%\end{equation}
(cf. \eqref{gt}). Note that   this is equivalent to {   saying} that   $ Q_t^{-1/2} \Gamma_t \in {\cal L}(H)$, $t>0$, and    we have
 %\begin{gather*}
  $Q_t^{-1/2} \Gamma_t $ $
 % = \sqrt{2} (-A)^{1/2} (I -e^{2tA})^{-1/2} (-A)^{-1/2}
 %(I -e^{tA}) \\
 = \sqrt{2} (I -e^{2tA})^{-1/2}  
 (I -e^{tA}) = \sqrt{2}    (I + e^{tA})^{- 1/2}  
 (I -e^{tA})^{1/2} $ $ \in {\cal L}(H). $   
  %\end{gather*}   
 } 
 \end{remark}

%\vskip 1 mm 
 \section{Examples}  
\subsubsection{  One-dimensional  stochastic Burgers-type equations      
%in 3D  dimensions 1, 2 and 3}
}   We consider 
  \begin{equation} \label{bur0}
d u (t, \xi)=   \frac{\partial^2}{\partial   \xi^2}  u(t, \xi)dt  +   \frac{\partial }{\partial \xi} {   h( }\xi, u(t, \xi))dt + dW_t(\xi), \;\; u(0, \xi) = u_0(\xi), \;\;\; \xi \in (0,\pi),
 \end{equation}
 with Dirichlet boundary condition $u(t,0) = u(t,\pi)=0$, $t>0$ (cf. \cite{Gy} and \cite{D2} and see the references therein). Here $u_0 \in H = L^2(0,\pi)$ and $A = \frac{d^2}{d  \xi^2}$ with Dirichlet boundary conditions, i.e. $D(A) = H^2(0, \pi) \cap H^1_0 (0, \pi)$. It is well-known that $A$ verifies Hypothesis \ref{d1}. The
eigenfunctions are
 $
 e_k (\xi) = \sqrt{2/\pi} \,  \sin (k \xi), $ $ \xi \in \R,\;\; k \ge 1.
 $
 
 The  eigenvalues
 are $-\lambda_k$, where $\lambda_k = k^2 $. 
  The cylindrical noise is $W_t(\xi) = \sum_{k \ge 1 }  
   W_t^{(k)} e_k(\xi)$ (cf. \cite{DZ}).
   {  Classical stochastic Burgers equations with ${   h( }\xi, u) = \frac{u^2}{2}$ are examples of locally monotone SPDEs and strong uniqueness holds (cf. \cite{brezniak}).}
   In \cite{Gy} {  strong uniqueness  is proved}
  %even with non-degenerate multiplicative noise 
  assuming that ${   h( }\xi, \cdot )$ is locally Lipschitz with a linearly growing Lipschitz constant.  
 %In \cite{D2} is assumed that $f: \R \to \R$ is bounded and $\theta $
  
  Here we   assume that {\sl  $h: (0, \pi) \times \R \to \R$ is continuous and there exists $C>0$ such that }
 $$
 |{   h( }\xi, s)| \le C (1 + |s|),
 $$
 $s \in \R$, $\xi \in (0, \pi)$ (more generally, one could impose Carath\'eodory type conditions on $h$). 
 
 It is  well-known that the  Nemiskii operator: 
   $x \in H \mapsto {  h( }  \cdot, x(\cdot)) \in H$ is continuous from $H$ into $H$.
 %in order that the Nemiskii operator: $x \in H \mapsto f(\xi, x)  $ is %continuous from $H$ into $H$).
  %Arguing as in Example 4.4 of  \cite{D2},
 To write \eqref{bur0} in the form \eqref{sde} we define $F: H \to H$ as follows
$$ 
F(x)(\xi) = (-A)^{-1/2}\,  \partial_{\xi}  [{   h( }\cdot , x(\cdot))](\xi),\;\;\; x \in  L^2(0,\pi)= H. 
$$
To check that  $F$ verifies \eqref{lin1} it is enough to prove  that 
$T= (-A)^{-1/2} \, \partial_{\xi}$ can be extended to a bounded linear {   operator} from $L^2(0,\pi)$ into $L^2(0,\pi)$. 
We briefly verify this fact.
 Recall that the domain $D[(-A)^{1/2}]$ coincides with the Sobolev space $ H^1(0, \pi)$.
Take $y \in H^1_0(0, \pi)$ and $x \in L^2(0, \pi)$. Define $x_N = \pi_N x$ (cf. \eqref{pp1}).
 Using that $(-A)^{1/2}$ is self-adjoint and integrating by parts we find (we use inner product in $ L^2(0, \pi)$ and the fact  that $y(0) = y(\pi)=0$)
\begin{gather*}
 \langle (-A)^{-1/2}  \partial_{\xi} \, y , x_N \rangle = \langle   \partial_{\xi} y , (-A)^{-1/2} x_N \rangle =  - \langle    y , \partial_{\xi} (-A)^{-1/2} x_N \rangle.
\end{gather*}   
 Now   $ \partial_{\xi} (-A)^{-1/2} x_N (\xi)    =  \sqrt{2/\pi} \sum_{k=1}^N  x^{(k)} \cos (k \xi)$ and so  $| \partial_{\xi} (-A)^{-1/2} x_N  |_{L^2(0,\pi)}^2$ $=  |x_N  |_{L^2(0,\pi)}^2$. 
  
 It follows that, for any $N \ge 1$,
$| \langle (-A)^{-1/2}  \partial_{\xi} y , x_N \rangle | $ $\le |y|_{L^2(0,\pi)} \, 
 |x |_{L^2(0,\pi)}$
 and we easily get the assertion. 
 Hence $F$ verifies \eqref{lin1} and {\sl  SPDE \eqref{bur0} is  well-posed in weak sense, for any initial condition $u_0 \in L^2(0,\pi)$.}
  
Note that instead of ${   h( }\xi, u)$ one could consider different non local non-linearities like, for instance, $u \, g(|u|_H)$ assuming that $g : \R \to \R$ is continuous and bounded.

\subsubsection{ Three-dimensional stochastic Cahn-Hilliard equations      
}  

 The  Cahn-Hilliard equation is a  model to describe phase separation in a binary alloy and some other media, in the presence of thermal fluctuations; 
  we refer to \cite{NC} for a survey
 on this model. The stochastic Cahn-Hilliard equation
  has been recently much investigated  under monotonicity conditions on $h$ which allow to prove pathwise uniqueness; in one dimension a typical example is ${   h( }s) = s^3 -s$  (see \cite{EM}, \cite{DD}, \cite{NC}, \cite{ES} and the references therein).   
 
 We can treat  such SPDE in one, two or three dimensions. Let us  consider   Neumann boundary conditions in a regular bounded open set $G \subset \R^3$. For the sake of simplicity we concentrate on the cube $G = (0, \pi)^3$.
  The equation has the  form 
 \begin{equation} \label{all}\begin{cases}
d u (t, \xi)= -  \triangle^2_{\xi} u(t, \xi)dt + \triangle_\xi {   h( }u(t, \xi))dt + dW_t(\xi), \;\;t>0,\;\;  u(0, \xi) = u_0(\xi)\;\; \text{on $G$}, 
\\
\frac{\partial}{\partial n} u = \frac{\partial}{\partial n} (\triangle u) =0 \;\; on \; \partial G,
\end{cases}
\end{equation}    
 where $\triangle_{\xi}^2 $ is the bilaplacian and $n$    is the outward unit normal vector on the boundary $\partial G$.  Let us introduce the Sobolev spaces $H^j(G) = W^{j,2}(G)$ and the Hilbert space $H$,
 $$
 \begin{array}{l} 
 H = \big \{ f \in L^2 (G) \, :\, \int_G f(\xi) d\xi =0 \big \}.
 \end{array} 
 $$
 %To be in  the setting of \eqref{sde}, 
 We assume $u_0 \in H$ and  define $ D(A) = \{ f \in H^4 (G) \cap H \, :\,  \frac{\partial}{\partial n} f = \frac{\partial}{\partial n} (\triangle f) =0 $    on $ \partial G \big \}, $ $A f = - \triangle^2_{\xi} f$, $f \in D(A)$. Using also the divergence theorem, we have $A : D(A) \to H$.
 
 The square root has domain $D[(-A)^{1/2}]   = \{ f \in H^2 (G) \cap H \, :\,  \frac{\partial}{\partial n} f  =0 $  on $ \partial G\big \};$ $- (-A)^{1/2} f = \triangle_{\xi} f$, $f \in D[(-A)^{1/2}]$. 
 Note that 
 % the hypotheses of Remark \ref{serve}.  
%(in order to verif Hypothesis \ref{d1} we should consider  
 $A$   is self-adjoint with compact resolvent  and it is   negative definite with $\omega =1$ (cf. Hypothesis \ref{d1}). 
  The
eigenfunctions are
 $$
 e_k (\xi_1, \xi_2, \xi_3) = (\sqrt{2/\pi})^3 \cos (k_1 \xi_1) \cos (k_2\xi_2)
 \cos(k_3 \xi_3), \;\;\; \xi = (\xi_1, \xi_2, \xi_3) \in
 \R^3,  $$
 $k=(k_1, k_2, k_3) \in
 \N^3$, $k \not = (0,0,0)= 0^*$.
 The corresponding  eigenvalues  
 are $-\lambda_k$, where $\lambda_k =  (k_1^2 + k_2^2 + k_3^2 )^2$.
 Since $\sum_{k  \in \N^3, \, k \not = 0^*} \, \lambda_k^{-1} < + \infty 
 $  we see that $A$ verifies Hypothesis \ref{d1}. 
The cylindrical Wiener process  is     $W_t(\xi) = \sum_{k \in \N^3,\; k \not = 0^*}  
   W_t^{(k)} e_k(\xi)$. 
   Note that  $\triangle_\xi {   h( }u(t, \xi)) =  \triangle_\xi \big [ 
  {   h( }u(t, \xi)) - \int_G  {   h( }u(t,\xi))d\xi\big] $.
     
Assuming that {\sl $h: \R \to \R$ in \eqref{all} is continuous and verifies $|{   h( }s)| \le c(1 + |s|)$, $s \in \R$,} we can define $F : H \to H$ as follows:
$$
\begin{array}{l} 
F(x)(\xi) = {   h( }x(\xi)) - \int_G  {   h( }x(\xi))d\xi,\;\;\; x \in H,\; \xi \in G.  
\end{array} 
$$
 It is not difficult to prove that  $F$ verifies \eqref{lin1}. Thus {\sl SPDE \eqref{all} is well-posed in weak sense, for any initial condition $u_0 \in H$.  
 }
 
 \section {A new  optimal  regularity result  }

Let $f \in B_b(H)$ and fix $z \in H$. Here we are interested in the regularity property of the function $u^{(z)}: H \to \R$ given in \eqref{wv}. 
  By Remark \ref{ss} we know that  $u^{(z)} \in C^1_b(H)$ and we have a formula for the directional derivative:  
\begin{gather} \label{s55} 
  D_h u^{(z)}(x) = \langle Du^{(z)}(x), h \rangle = \int_0^{\infty} e^{- \lambda t}  D_{ h} P_t^{(z)} f(x) dt,\;\;\; x,h \in H, \; \lambda >0. 
\end{gather}
 In the sequel,  for any $s \ge 0,$ we will consider the bounded   linear  operator $(-A)^{-1/2} e^{sA} : H \to H$  defined as 
 $$
 \begin{array}{l}
  (-A)^{-1/2} e^{sA} h = \sum_{k \ge 1} \lambda_k^{-1/2} e^{-s \lambda_k}\, h^{(k)} e_k,
 \end{array}
 $$
 $h \in H$.
 The following  lemma will be  important.  
\begin{lemma}\label{ss1} 
 For $s \ge 0$, $f \in B_b(H)$, $\lambda >0$, $x, h, z \in H$, we have
$$
 \Big| \int_0^{\infty} e^{- \lambda t}  D_{ h} {P_{t+s}^{(z)}}f(x) dt \Big| 
 \le 
  \frac{\pi} {\sqrt 2} 
  \| f\|_0 \, |(-A)^{-1/2} e^{sA} h|_H.
 $$
  \end{lemma}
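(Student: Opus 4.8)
The plan is to start from the Bismut--Elworthy--Li type formula \eqref{e3}, which represents the directional derivative as a Gaussian integral, and to exploit the diagonal action of $A$ throughout. Writing $y = Q_{t+s}^{1/2}\zeta$ with $\zeta$ standard Gaussian and using that $\Lambda_{t+s}$ is diagonal, one has $\langle \Lambda_{t+s}h,\zeta\rangle = \sum_k (\Lambda_{t+s})_{kk}\,h^{(k)}\zeta^{(k)}$, where $(\Lambda_{t+s})_{kk}=\sqrt2\,\lambda_k^{1/2}e^{-(t+s)\lambda_k}(1-e^{-2(t+s)\lambda_k})^{-1/2}$ as in \eqref{e5}. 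After interchanging the sum with the integrations in $y$ and in $t$, the quantity to be estimated becomes
$$\sum_k h^{(k)}\int_0^\infty e^{-\lambda t}(\Lambda_{t+s})_{kk}\,\phi_k(t)\,dt,\qquad \phi_k(t)=\int_H \zeta^{(k)}\,f(e^{(t+s)A}x+Q_{t+s}^{1/2}\zeta+\Gamma_{t+s}z)\,N(0,I)(d\zeta),$$
so that $\phi_k(t)$ is the first Gaussian moment of $f$ in the $k$-th coordinate.

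Two ingredients feed the estimate. First, the scalar per-mode integral: the substitution $v=e^{-(t+s)\lambda_k}$ turns $\int_0^\infty (\Lambda_{t+s})_{kk}\,dt$ into $\sqrt2\,\lambda_k^{-1/2}\int_0^{e^{-s\lambda_k}}(1-v^2)^{-1/2}\,dv=\sqrt2\,\lambda_k^{-1/2}\arcsin(e^{-s\lambda_k})$, and the elementary inequality $\arcsin(w)\le \tfrac{\pi}{2}w$ on $[0,1]$ yields exactly the factor $\tfrac{\pi}{\sqrt2}\,\lambda_k^{-1/2}e^{-s\lambda_k}=\tfrac{\pi}{\sqrt2}\,|(-A)^{-1/2}e^{sA}e_k|_H$ (the harmless factor $e^{-\lambda t}\le 1$ is simply dropped, which is why the final constant is $\lambda$-independent). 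Second, since the normalized coordinates $\zeta^{(k)}$ are orthonormal in $L^2$ of the Gaussian and $f$ is bounded, Bessel's inequality gives the crucial $\ell^2$ control $\sum_k \phi_k(t)^2\le \|f\|_0^2$ for every $t$.

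The hard part is combining these so as to land on the \emph{$\ell^2$} quantity $|(-A)^{-1/2}e^{sA}h|_H$ with the sharp constant $\tfrac{\pi}{\sqrt2}$. Using the per-mode bound mode by mode only produces the $\ell^1$-type expression $\tfrac{\pi}{\sqrt2}\sum_k \lambda_k^{-1/2}e^{-s\lambda_k}|h^{(k)}|$, which genuinely overshoots: testing on two well-separated eigenvalues one sees that $\int_0^\infty|\Lambda_t h|_H\,dt$ can be as large as $\pi\,|(-A)^{-1/2}h|_H$, so the crude bound $|D_hP_{t+s}^{(z)}f|\le\sqrt{2/\pi}\,\|f\|_0\,|\Lambda_{t+s}h|_H$ integrated in $t$ only gives the worse constant $\sqrt{2\pi}$. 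The right move is to apply Cauchy--Schwarz in $k$ \emph{pointwise in $t$}, pairing $h^{(k)}$ (weighted by $\lambda_k^{-1/2}e^{-s\lambda_k}$, which assembles precisely $|(-A)^{-1/2}e^{sA}h|_H$) against the remaining factor, and only then to integrate in $t$ and use the Bessel bound. I expect the real obstacle to sit exactly here: the estimate must use that all the coefficients $\phi_k(t)$ come from one and the same bounded $f$ — hence are correlated across both $k$ and $t$ — because an argument relying solely on the pointwise bound $\sum_k\phi_k(t)^2\le\|f\|_0^2$ together with the per-mode mass $\int_0^\infty(\Lambda_{t+s})_{kk}\,dt$ is insufficient (one can saturate both pointwise constraints and still exceed $\tfrac{\pi}{\sqrt2}$). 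Converting the $t$-singularity of \eqref{ww5} into the finite $\arcsin$ factor while respecting this coupling, via the diagonal structure, is the technical heart of the lemma.

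A convenient way to organize the coupling is to first reduce to $s=0$: by the semigroup property $P_{t+s}^{(z)}=P_s^{(z)}P_t^{(z)}$ the integral equals $D_h\big(P_s^{(z)}u^{(z)}\big)(x)$, and the chain-rule formula for the $C^1_b$ function $u^{(z)}$,
$$D_h P_s^{(z)}g(x)=\int_H \big\langle Dg\big(e^{sA}x+y+\Gamma_s z\big),\,e^{sA}h\big\rangle\,N(0,Q_s)(dy),$$
combined with $\langle Dg,\,e^{sA}h\rangle=\langle (-A)^{1/2}Dg,\,(-A)^{-1/2}e^{sA}h\rangle$ and Cauchy--Schwarz, reduces everything to the single bound $\sup_{x\in H}|(-A)^{1/2}Du^{(z)}(x)|_H\le \tfrac{\pi}{\sqrt2}\|f\|_0$, i.e. the case $s=0$. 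Thus the coupling estimate above need only be carried out once, for $s=0$, and the general statement follows.
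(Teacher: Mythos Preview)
Your proposal is not a proof: you correctly diagnose that the naive Cauchy--Schwarz in $k$ followed by integration in $t$ only yields the constant $\sqrt{2\pi}$, you then state that ``the right move is to apply Cauchy--Schwarz in $k$ pointwise in $t$'' with the weights $\lambda_k^{-1/2}e^{-s\lambda_k}$, but you immediately concede that this move combined with the pointwise Bessel bound $\sum_k\phi_k(t)^2\le\|f\|_0^2$ is still insufficient --- and you never actually close the gap. Your reduction from general $s$ to $s=0$ via $D_hP_s^{(z)}u^{(z)}=P_s^{(z)}\big(D_{e^{sA}h}u^{(z)}\big)$ is valid and elegant, but it only shifts the whole burden onto the case $s=0$, which remains unproved in your write-up. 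So as it stands there is a genuine gap: the core inequality is asserted, not established.

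What you are missing is precisely the step that the paper carries out. The key is \emph{not} to integrate out the Gaussian variable first (producing your scalars $\phi_k(t)$, which then mix $k$ and $t$ in an intractable way) but to reduce to a cylindrical $f=\tilde f(\langle\cdot,e_1\rangle,\dots,\langle\cdot,e_m\rangle)$, write everything as an explicit integral over $\R^m$, and apply Fubini to swap the $t$-integral with the spatial integral \emph{before} any estimation. For fixed $\tilde v\in\R^m$ one then performs, term by term in the sum over $k$, the rescaling $u=\lambda_k t$. This is the decisive manoeuvre: after it the time-kernel becomes the $k$-independent factor $\frac{e^{-u}}{\sqrt{1-e^{-2u}}}$ (times $e^{-\lambda u/\lambda_k}\le1$), and what remains next to it is exactly $\lambda_k^{-1/2}e^{-s\lambda_k}h^{(k)}$ multiplied by the standardised coordinate $\frac{v_k-\text{mean}_k}{c_k}$. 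Swapping back (Fubini again), for each fixed $u$ the $\R^m$-integral is bounded by $\|f\|_0\cdot\int_{\R^m}\big|\sum_k\lambda_k^{-1/2}e^{-s\lambda_k}h^{(k)}y_k\big|\,N(0,I_m)(dy)\le\|f\|_0\,|(-A)^{-1/2}e^{sA}h|_H$, and the remaining $u$-integral is $\int_0^\infty e^{-u}(1-e^{-2u})^{-1/2}\,du=\tfrac{\pi}{2}$. This produces the sharp $\tfrac{\pi}{\sqrt2}$. In your $\phi_k(t)$ framework the spatial integration has already been performed, so the per-mode time rescaling yields $\phi_k(u/\lambda_k)$ and the different modes no longer share a common Gaussian variable --- that is exactly why your Cauchy--Schwarz/Bessel combination falls short.
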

\begin{proof} Let us fix $f \in B_b(H)$, $s \ge 0 $, $\lambda >0$, $z$ and $x \in H$. We proceed in two steps.

\noindent {\it I Step. We consider $f \in C_b(H)$.}  We know that the functional $R_{x,f,s, \lambda}^{(z)} : H \to \R$,
\begin{gather} \label{s11}
 R_{x,f,s, \lambda }^{(z)} (h) = \int_0^{\infty} e^{- \lambda t}  D_{ h} P_{t+s}^{(z)} f(x) dt =\int_0^{\infty} e^{- \lambda t}  \langle D {P_{t+s}^{(z)}}f(x), h \rangle dt,\;\; h \in H, 
\end{gather}
is linear and bounded, thanks to the estimate $\sup_{x \in H} |D {P_r^{(z)} } f(x)|_H \le \frac{c \| f\|_0 }{\sqrt{r}}$, $r>0$. 
  Recall   the projections $ \pi_N  : H \to \text{Span}\{e_1, \ldots, e_N \}$ (see \eqref{pp1});  $\pi_N h = \sum_{k=1}^N h^{(k)} e_k,$ $N \ge 1$.
The assertion for $f \in C_b(H)$ follows if we prove that, for any $h \in H,$ $N \ge 1,$  
\begin{equation} \label{dbb}
  |R_{x,f,s, \lambda }^{(z)} (\pi_N h)| \le C |(-A)^{-1/2} e^{sA}  h|_H \, \| f\|_0,
\end{equation}
where $C$ is independent of $N$ and $h$. We   fix $h \in H$ and  define 
$$
\begin{array}{l}
\pi_N h = h_N.
\end{array}  
$$
 We first 
 assume that $f \in C_b (H)$ 
 depends only  on a finite numbers of coordinates.  Identifying
  $H$ with $l^2({\mathbb N})$ through the basis $(e_k)$, we  have 
$$
f (x) = \tilde f(x^{(1)}, \ldots,
  x^{(m)}), \;\;\; x \in H,
$$  
 for some  $m \ge 1$, and  $\tilde f : \R^m \to \R$ continuous and bounded. 

Setting $Q_t e_k = Q_t^k e_k$, where 
  $Q_t^k = \int_0^t e^{-2 \lambda_k r} dr = \frac{1 - e^{-2 \lambda_k t}}{2 \lambda_k}$  (cf. \eqref{qt1}) we consider
\begin{gather*}  
\begin{array}{l}
p(x) = \frac{1}{\sqrt{2 \pi}}e^{- x^2/2 },\;\; c_k(t) =
\Big (\frac{ 1- e^{-2\lambda_k t} } {2 \lambda_k}  \Big)^{1/2} = \sqrt{Q_t^k}.
\end{array}  
\end{gather*}
Note that, for $t>0,$ the one dimensional Gaussian measure $N(0, \int_0^t e^{-2 \lambda_k s}ds)$ has density $p(\frac{ x}{ c_k(t)} ) \frac{1}{c_k(t)}$ with respect to the Lebesgue measure on $\R.$     
 Recall also that  
 \begin {equation}\label{reca}
\Lambda_{t+s}^k = \sqrt
2\;(\lambda_k)^{1/2}e^{-[t+s]\lambda_k}(1-e^{-2[t+s]\lambda_k})^{-1/2},
\end{equation} 
$ t > 0, \; k \ge 1$ (see \eqref{lam}). In the sequel we concentrate on the more difficult case    $N > m$ (if $N \le m$ we can obtain \eqref{dbb} arguing similarly). By \eqref{e3} we find, integrating over $\R^N$, 
\begin{gather*}  
  R_{x,f,s, \lambda }^{(z)} (h_N) =
\int_0^{\infty} e^{- \lambda t}\Big (   \int_H {f}(e^{(t+s)A}x + y + %% 
\Gamma_{t+s}z ) \,  
 \Big[\sum_{k=1}^N \Lambda_{t+s}^k \frac{ h^{(k)}\, y_k} {c_k(t+s)} \Big]
 N(0, Q_{t+s})  dy \Big ) dt  
\\
= \int_0^{\infty} e^{- \lambda t} dt \int_{\R^N}  {\tilde f}\big (e^{-(t+s)\lambda_1}x^{(1)} + y_1 + \frac{1- e^{-(t+s) \lambda_1 } }{\sqrt{ \lambda_1} } z^{(1)}, \ldots 
\\ \ldots, e^{-(t+s)\lambda_m}x^{(m)} + y_m + \frac{1- e^{-(t+s) \lambda_m }}{\sqrt{\lambda_m}}       z^{(m)} \big) \,  \cdot 
\\
 \cdot
 \Big[\sum_{k=1}^m \Lambda_{t+s}^k \frac{ h^{(k)} \, y_k} {c_k(t+s)}  + 
 \sum_{k=m+1}^N \Lambda_{t+s}^k \frac{ h^{(k)} \, y_k} {c_k(t+s)} 
  \Big] \prod_{k=1}^N  p \big(\frac{ y_k} {c_k(t+s)} \big)\frac{1}{{c_k(t+s)}}
   dy_{1} \ldots   dy_{N}.
\end{gather*}
 Set 
  $
 \Gamma_k(r) = \frac{[1- e^{-r \lambda_k }]}{(\lambda_k)^{1/2}},   \;\;\; k \ge 1, \, r \ge 0.
 $ 
 Since by the Fubini theorem
 $$
 \begin{array}{l}     
 0= \int_{\R^{N-m}} \Big[\sum_{k=m+1}^N \Lambda_{t+s}^k \frac{ h^{(k)} \, y_k} {c_k(t+s)} \Big] \prod_{k=m+1}^N  p (\frac{ y_k} {c_k(t+s)} )\frac{1}{{c_k(t+s)}}
  dy_{m+1} \ldots   dy_{N}  
 \; \cdot 
\\  \ds \cdot \int_{\R^m}  {\tilde f}(e^{-(t+s)\lambda_1}x^{(1)} + y_1 + \Gamma_1(t+s) z^{(1)}, \ldots, e^{-(t+s)\lambda_m}x^{(m)}   + y_m + \Gamma_m(t+s) z^{(m)}) \, \cdot 
\\ \ds \, \cdot \,    
  \prod_{k=1}^m  p (\frac{ y_k} {c_k(t+s)} )\frac{1}{{c_k(t+s)}}
  dy_1 \ldots dy_m,  
  \end{array}  
 $$ 
  we find 
  \begin{gather*}
 R_{x,f,s, \lambda }^{(z)} (h_N) = \int_0^{\infty} e^{- \lambda t}  D_{ h_N} {P_{t+s}^{(z)}} f (x) dt  
 %\baselineskip = 2.0 pt
 % \\
 %= \int_0^{\infty} e^{- \lambda t} dt \int_{\R^m}  {\tilde f}(e^{-(t+s)%% 
 % \lambda_1}x^{(1)} + y_1 + \Gamma_1(t+s) z^{(1)}, \ldots
 %, e^{-(t+s)\lambda_m}x^{(m)} + y_m + \Gamma_m(t+s) z^{(m)}) \,  \cdot 
 %\\
 % \cdot
 %\Big[\sum_{k=1}^m \Lambda_{t+s}^k \frac{ h^{(k)} \, y_k} {c_k(t+s)}   
 % \Big] \prod_{k=1}^m  p \big(\frac{ y_k} {c_k(t+s)} \big)\frac{1}{{c_k(t%
 %+s)}}
 %  dy_{1} \ldots   dy_{m} 
\\
= \int_0^{\infty} e^{- \lambda t} dt \int_{\R^m}  {\tilde f}(v_1, \ldots , v_m) \,  \cdot 
 \Big[\sum_{k=1}^m \Lambda_{t+s}^k \frac{ h^{(k)} \, [v_k - e^{-(t+s)\lambda_k}\, x^{(k)} - \Gamma_k(t+s) z^{(k)} ]} {c_k(t+s)}   
  \Big] \cdot
  \\ \cdot \, \prod_{k=1}^m  p \big(\frac{ v_k - e^{-(t+s)\lambda_k}\, x^{(k)} -  \Gamma_k(t+s) z^{(k)} } {c_k(t+s)} \big)\frac{1}{{c_k(t+s)}}
   dv_{1} \ldots   dv_{m}. 
\end{gather*}    
In the sequel to simplify the notation we write 
$$
\tilde h=(h^{(1)}, \ldots,  h^{(m)}), \;\;\; 
\tilde x =(x^{(1)}, \ldots,
  x^{(m)}) ,
$$  $\tilde v = (v_1, \ldots,
  v_m)$, $\Lambda_{t+s} \tilde h = (\Lambda_{t+s}^1 h^{(1)}, \ldots,
 \Lambda_{t+s}^m h^{(m)}) \in \R^m $. By the Fubini theorem,  
 we deduce
 \begin{gather*}
 R_{x,f,s, \lambda }^{(z)} (h_N)   
 = \int_{\R^m}  {\tilde f}(\tilde v ) d \tilde v  \int_0^{\infty} e^{- \lambda t}  
 \Big[\sum_{k=1}^m \Lambda_{t+s}^k \frac{ h^{(k)} \, [v_k - e^{-(t+s)\lambda_k}x^{(k)} -
 \Gamma_k(t+s) z^{(k)}]} {c_k(t+s)}   
  \Big] 
 \\ \, \cdot \, 
  \prod_{k=1}^m  p \big(\frac{ v_k - e^{-(t+s)\lambda_k}x^{(k)} - \Gamma_k(t+s) z^{(k)} } {c_k(t+s)} \big)\frac{1}{{c_k(t+s)}} dt.
\end{gather*}
Now, for any fixed $\tilde v \in \R^m$, we have, recalling \eqref{reca} and   changing variable: $u = \lambda_k t$,
\begin{gather*}
 \int_0^{\infty} e^{- \lambda t}   \Big[\sum_{k=1}^m \Lambda_{t+s}^k \frac{ h^{(k)} \, [v_k - e^{-(t+s)\lambda_k}x^{(k)} - \Gamma_k(t+s) z^{(k)}] } {c_k(t+s)}   
  \Big] 
\\ \, \cdot \,   
  \prod_{k=1}^m  p \big(\frac{ v_k - e^{-(t+s)\lambda_k}{x^{(k)}} - \Gamma_k(t+s) {z^{(k)}} } {c_k(t+s)} \big)\frac{1}{{c_k(t+s)}} dt
%\\
%= \sqrt
%{2} \sum_{k=1}^m\int_0^{\infty} e^{- \lambda t}   \;(\lambda_k)^{1/2} %
%\frac{e^{-[t+s]\lambda_k} } {(1-e^{-2[t+s]\lambda_k})^{1/2} } \frac{  \, [v_k %- e^{-(t+s)\lambda_k}{x^{(k)}} - \Gamma_k(t+s) {z^{(k)}} ] } {c_k(t+s)}   %h^{(k)}
%  \, \cdot 
%\\
%\cdot \, \prod_{k=1}^m  p \big(\frac{ v_k - e^{-(t+s)\lambda_k}{x^{(k)}} - %
%\Gamma_k(t+s) {z^{(k)}}  } {c_k(t+s)} \big)\frac{1}{{c_k(t+s)}} dt
\\  =  \sqrt
{2} \int_0^{\infty} {     \sum_{k=1}^m }\, \, e^{- \lambda u/\lambda_k }  \frac{  e^{-u}}{ (1-e^{-2u }  e^{-2s\lambda_k})^{1/2}}  \cdot \\ \cdot \;     (\lambda_k)^{-1/2} e^{-s\lambda_k}  h^{(k)} \,
\frac{  [v_k - e^{-(u/ \lambda_k \, +s)\lambda_k}{x^{(k)}} - \Gamma_k(u/ \lambda_k \, +s) {z^{(k)}} ] } {c_k(u/ \lambda_k \, +s)} \, \cdot 
\\
\cdot  \prod_{k=1}^m  p \big(\frac{ v_k - e^{-(u/\lambda_k \, +s)\lambda_k}{x^{(k)}}  - \Gamma_k(u/ \lambda_k \, +s) {z^{(k)}}} {c_k(u/\lambda_k  \, +s)} \big)\frac{1}{{c_k(u/\lambda_k  \, +s)}} du.
\end{gather*}
By the Fubini theorem and using $1-e^{-2u} e^{-2 s \lambda_k} \ge 1-e^{-2u} $,  $u \ge 0$, $k \ge 1$, we get 
\begin{equation} \label{wcv1} 
\begin{array}{l} 
\ds 
   |R_{x,f,s, \lambda }^{(z)} (h_N) | 
      =   \sqrt
{2}   
 \Big | \int_0^{\infty} du \int_{\R^m} {   \sum_{k=1}^m } \, \,  f(\tilde v)  e^{- \lambda u/\lambda_k }  \frac{  e^{-u}}{ (1-e^{-2u }  e^{-2s\lambda_k})^{1/2}}  \,  
 \\ \ds \, \cdot \, 
   (\lambda_k)^{-1/2} e^{-s\lambda_k}  h^{(k)} \,
\frac{  [v_k - e^{-(u/ \lambda_k \, +s)\lambda_k}{x^{(k)}} - \Gamma_k(u/ \lambda_k \, +s) {z^{(k)}}] } {c_k(u/ \lambda_k \, +s)} \, 
\\ \ds  \, \cdot \,  \prod_{k=1}^m  p \big(\frac{ v_k - e^{-(u/\lambda_k \, +s)\lambda_k}{x^{(k)}} - \Gamma_k(u/ \lambda_k \, +s) {z^{(k)}} } {c_k(u/\lambda_k  \, +s)} \big)\frac{1}{{c_k(u/\lambda_k  \, +s)}} d \tilde v   \Big|
\\ \ds
\le  \sqrt
{2}  \int_0^{\infty}  \frac{ e^{-u}} 
{(1-e^{-2u }  )^{1/2}}   du \int_{\R^m} |f(\tilde v)| \,  \cdot
\\ \ds   \cdot   \Big | \sum_{k=1}^m   \frac{ e^{-s\lambda_k}} { (\lambda_k)^{1/2}}  h^{(k)} \, \frac{  [v_k - e^{-(u/ \lambda_k \, +s)\lambda_k}{x^{(k)}} - \Gamma_k(u/ \lambda_k \, +s) {z^{(k)}} ]} {c_k(u/ \lambda_k \, +s)}   \Big| \, \,  \cdot 
\\ \ds  
 \cdot \, \prod_{k=1}^m  p \Big(\frac{ v_k - e^{-(u/\lambda_k \, +s)\lambda_k}{x^{(k)}}  - \Gamma_k(u/ \lambda_k \, +s) {z^{(k)}}} {c_k(u/\lambda_k  \, +s)} \Big)\frac{1}{{c_k(u/\lambda_k  \, +s)}} d \tilde v.
\end{array}
\end{equation} 
%where we have  used that $1-e^{-2u} e^{-2 s \lambda_k} \ge 1-e^{-2u} $, %for any $u \ge 0$, $k \ge 1$.   
Let us fix $u \ge 0$; we have, changing variable in the integral over $\R^m$,
\begin {gather*}
\int_{\R^m} |f(\tilde v)| \,     \Big | \sum_{k=1}^m   (\lambda_k)^{-1/2} e^{-s\lambda_k}  h^{(k)} \, \frac{  [v_k - e^{-(u/ \lambda_k \, +s)\lambda_k}{x^{(k)}} - \Gamma_k(u/ \lambda_k \, +s) {z^{(k)}} ] } {c_k(u/ \lambda_k \, +s)}   \Big| \, \,  \cdot 
\\
 \cdot \, \prod_{k=1}^m  p \Big(\frac{ v_k - e^{-(u/\lambda_k \, +s)\lambda_k}{x^{(k)}}  - \Gamma_k(u/ \lambda_k \, +s) {z^{(k)}}} {c_k(u/\lambda_k  \, +s)} \Big)\frac{1}{{c_k(u/\lambda_k  \, +s)}} d \tilde v
\\
%\le  \| f\|_0
%\int_{\R^m}   \Big | \sum_{k=1}^m   (\lambda_k)^{-1/2} e^{-s\lambda_k}  %h^{(k)} \frac{  w_k  } {c_k(u/ \lambda_k \, +s)}   \Big| \, \, 
%  \prod_{k=1}^m  p \Big(\frac{ w_k   } {c_k(u/\lambda_k  \, +s)} \Big)
%\frac{1}{{c_k(u/\lambda_k  \, +s)}} d \tilde w,
%\\
% = 
 \le \| f\|_0
\int_{\R^m}   \Big | \sum_{k=1}^m   (\lambda_k)^{-1/2} e^{-s\lambda_k}  h^{(k)} {  y_k  }   \Big| \, \, 
 % \prod_{k=1}^m  p ( y_k)     d 
 N(0,I_m)
 (d\tilde y), 
\end{gather*}
using  the standard Gaussian measure $N(0,I_m)$ with density $\prod_{k=1}^m  p ( y_k)$.  We find   
\begin{gather*} 
%\text{Now} \;\;\; 
\begin{array}{l}
\ds \int_{\R^m}   \Big | \sum_{k=1}^m   (\lambda_k)^{-1/2} e^{-s\lambda_k}  h^{(k)} {  y_k  }   \Big| \, \, 
    N(0,I_m)
 (d\tilde y)
%  \\
%\le 
% \Big( \int_{\R^m}   \Big | \sum_{k=1}^m   (\lambda_k)^{-1/2} e^{-s\lambda_k}  
%h^{(k)} {  y_k  }   \Big|^2 \, \, 
%  N(0,I_m)    ( d \tilde y) \Big)^{1/2}
  \\ \ds \le  
 \big( \int_{\R^m}    \sum_{k=1}^m   (\lambda_k)^{-1} e^{-2s\lambda_k}  [h^{(k)}]^2 {  y_k^2  }    \, \, 
   N(0,I_m) (d\tilde y) \big)^{1/2}
    =   \, \big ( \sum_{k=1}^m   (\lambda_k)^{-1} e^{-2s\lambda_k}  [h^{(k)}]^2 \big)^{1/2}.
  \end{array}   
\end{gather*}
By \eqref{wcv1}   it follows that  
$$ \begin{array}{l} \ds
|R_{x,f,s, \lambda }^{(z)} (h_N) | 
\le \sqrt{2} \| f\|_0 \, \Big ( \sum_{k=1}^m   (\lambda_k)^{-1} e^{-2s\lambda_k} \, [h^{(k)}]^2 \Big)^{1/2}  \, \int_0^{\infty}  e^{-u} 
(1-e^{-2u }  )^{-1/2} du    
\\ \ds
\le \frac{\pi} {\sqrt 2}  \| f\|_0 \,  |(-A)^{-1/2} e^{sA} h|_H
\end{array}
$$
and so \eqref{dbb} holds.  Now   we treat  an arbitrary $f \in C_b (H)$.
We introduce  the cylindrical functions $f_n,$ $
 f_n (x)  $ $= {   f(\pi_n x)} $ $ = f \Big( \sum_{k =1}^n x^{(k)} e_k \Big),
 $ $ n \in \N, $ $ x \in
 H.
$  It is clear that $\| f_n \|_0 \le  \| f\|_0$, $n \in
\N$, and moreover $f_n (x) \to f(x)$, for any $x \in H$.
%Let us fix $s \in ]0,1]$, $x \in H$, $\lambda >0.$ 

By the previous estimate with $f$ replaced by $f_n$,
we get 
  \begin {equation} \label{www1}
 \Big| \int_0^{\infty} e^{- \lambda t}  D_{ h} {P_{t+s}^{(z)}}f_n (x) dt \Big|
 \le 
   \frac{\pi} {\sqrt 2}  \| f\|_0 |(-A)^{-1/2} e^{sA} h|_H,\;\;\; h \in H.
 \end{equation}
Let $t>0$ (recall that $s \ge 0$ is fixed). According to \eqref{e3} we have 
\begin{gather} \label{s331} 
 D_{ h} {P_{t+s}^{(z)}}f_n(x) = \int_H \langle
  \Lambda_{t+s} h, Q_{t+s}^{-\frac12} y\rangle\, f_n(e^{(t+s)A}x+y +\Gamma_{t+s}z  )\, N(0,Q_{t+s})(dy);
\end{gather}
 we can easily pass to the limit as $n \to \infty$ in \eqref{s331} by the Lebesgue convergence theorem and get $D_{ h} {P_{t+s}^{(z)}}f_n(x) \to $  $D_{ h} {P_{t+s}^{(z)}}f(x)$. 
  Similarly,  using also the estimate $ |D {{P_{t+s}^{(z)}}} f(x)|_H \le \frac{c \| f\|_0 }{\sqrt{t}}$, $t>0$, we can pass 
to the limit, as $n \to \infty$, in \eqref{www1} and  
 obtain \eqref{dbb} when $f \in C_b(H).$
 
 \noindent {\it II Step. Let us consider  $f \in B_b(H)$.} 

Here we use  the invariant measure $\mu = N(0,-\frac12 A^{-1})$ for $(P_t)$ (i.e., $(P_t^{(z)})$ when $z=0$).
 There exists a uniformly bounded   sequence $(f_n) \subset C_b(H) $ such that $f_n(x) \to f(x)$, as $n \to \infty$, for any $x \in H$, $\mu$-a.s., and $\| f_n\|_{0} \le \| f\|_0$ (to this purpose it is enough to note that $P_{1/k} f \to f $ in $L^2(H,\mu)$ as $k \to \infty$). 

It is well-known that for any $r>0,$ $x \in H,$ $N(e^{rA} x, Q_{r})$ is equivalent to $\mu$ (see \cite{DZ} and \cite{DFPR}).  This follows from the fact that $(P_t)$ 
is strong Feller and irreducible and so we can apply the  Doob theorem (cf. Proposition 11.13 in \cite{DZ}).
%Arguing as in page 3316 of \cite{DFPR},  we deduce that,
We note that,  for  $r>0$, $x \in H,$  
$$
| {P_r^{(z)} } f_n(x) -  {P_r^{(z)} } f(x)|
\le  \int_{H} | f_n ( y) - f(y) |\,  N(e^{rA}x+ \Gamma_r z , Q_r)dy.   
$$
By Remark \ref{maa}  we know  $\Gamma_r z \in Q_r^{1/2} (H)$, where   $Q_r^{1/2} (H)$ is the Cameron-Martin space of $N(e^{rA}x , Q_r)$. Applying the Feldman-Hajek theorem we find that $N(e^{rA}x+ \Gamma_r z , Q_r) $ and $N(e^{rA}x, Q_r)$ are  equivalent.
 
 By the Lebesgue  theorem we find, for  $x \in H,$     $r>0,$
 %\begin{gather*}
  $ {\lim_{n \to \infty}| {P_r^{(z)} } f_n(x) -  {P_r^{(z)} } f(x)|=0.}$
   %\end{gather*}
Now let us fix $t>0$. Writing $D_{ h} {P_{t+s}^{(z)}}f_n =   D_{ h} P_{s+ \frac{t}{2}}^{(z)} P_{t/2}^{(z)} \, f_n$, we have
$$
% \begin{array}{l}
 D_{ h} {P_{t+s}^{(z)}}f_n(x) = \int_H \langle
  \Lambda_{\frac{t}{2}+ s} h, Q_{\frac{t}{2} +s}^{-\frac12} y\rangle\, P_{\frac{t}{2}}^{(z)}f_n(e^{(\frac{t}{2} +s)A}x+y +  \Gamma_{\frac{t}{2}+s}z)\, N(0,Q_{\frac{t}{2}+s})(dy).
%  \end{array} 
$$ 
Since   
 $P_{t/2}^{(z)} f_n(x) \to P_{t/2}^{(z)} f(x)$, for any $x \in H$, we find easily $ D_{ h} {P_{t+s}^{(z)}}f_n(x) 
\to  D_{ h} P_{s+t/2 }^{(z)} P_{t/2}^{(z)} f(x)$ $=  D_{ h} {P_{t+s}^{(z)}}(x)$, as $n \to \infty.$ Since, for any $n \ge 1$, $h \in H$,
  \begin {equation} \label{www12}
  \begin{array}{l}
\Big| \int_0^{\infty} e^{- \lambda t}  D_{ h} {P_{t+s}^{(z)}}  f_n (x) dt \Big| \le 
   \frac{\pi} {\sqrt 2}\,  \| f\|_0 \, |(-A)^{-1/2} e^{sA} h|_H,
\end{array}  
 \end{equation}
  passing to the limit as $n \to \infty $ (using also \eqref{wdc}) we obtain the assertion.
\end{proof}

By considering $s=0$ in the previous lemma, we obtain 
%of independent interest
\begin{theorem}\label{ss13} Let $f \in B_b(H)$, $\lambda >0$, $z \in H$ and consider $u^{(z)} \in C^1_b(H)$ given in \eqref{wv}.
 The following assertions hold.  
\\
(i) For any $x \in H$ we have
$Du^{(z)}(x) \in D((-A)^{1/2})$, $(-A)^{1/2} Du^{(z)} : H \to H $ is Borel and bounded and 
\begin{equation}\label{w44}    
 \sup_{x \in H} \big [\sum_{k \ge 1} \lambda_k  (D_{e_k} u^{(z)} (x))^2  \big]^{1/2}= \| (-A)^{1/2} Du\|_0  \le \frac{\pi} {\sqrt 2} \|f\|_0.
\end{equation}
(ii) Let $(f_n) \subset B_b(H)$ be such  that $\sup_{n \ge 1 }\| f_n\|_0 \le C < \infty$ and  $f_n(x) \to f(x)$, $x \in H$. Define
$$
%\begin{array}{l}
u^{(z)}_n(x) = \int_0^{\infty} e^{- \lambda t}  P_{t}^{(z)}  f_n(x) dt.
%\end{array}  
$$
Then, for any $h \in H$,  
 \begin{equation} \label{s1144} 
\langle (-A)^{1/2} Du^{(z)}_n(x), h \rangle  
\to  
\langle (-A)^{1/2} Du^{(z)}(x), h \rangle,    \;\; \text{as $n \to \infty$, $x, z  \in H$.}    
\end{equation}
\end{theorem}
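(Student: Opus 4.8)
The plan is to read off part (i) directly from Lemma \ref{ss1} with $s=0$ and then bootstrap the resulting pointwise bound by a short self-improvement argument. Taking $s=0$ in Lemma \ref{ss1} and recalling the representation $D_h u^{(z)}(x)=\int_0^\infty e^{-\lambda t}D_h P_t^{(z)}f(x)\,dt$ from \eqref{uu1}, I would first record the key estimate
$$|\langle Du^{(z)}(x),h\rangle| \le \frac{\pi}{\sqrt2}\,\|f\|_0\,|(-A)^{-1/2}h|_H, \qquad x,h\in H.$$
Writing $v_k=D_{e_k}u^{(z)}(x)=\langle Du^{(z)}(x),e_k\rangle$ and testing this inequality against the finite vector $h_N=\sum_{k=1}^N\lambda_k v_k e_k$, for which $|(-A)^{-1/2}h_N|_H=(\sum_{k=1}^N\lambda_k v_k^2)^{1/2}$, I obtain $\sum_{k=1}^N\lambda_k v_k^2\le \frac{\pi}{\sqrt2}\|f\|_0(\sum_{k=1}^N\lambda_k v_k^2)^{1/2}$; dividing and letting $N\to\infty$ yields $\sum_{k\ge1}\lambda_k v_k^2\le (\frac{\pi}{\sqrt2}\|f\|_0)^2$. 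This is exactly the statement that $Du^{(z)}(x)\in D((-A)^{1/2})$ together with the bound in \eqref{w44}; the first equality in \eqref{w44} is just the definition $|(-A)^{1/2}Du^{(z)}(x)|_H^2=\sum_k\lambda_k v_k^2$ followed by $\sup_x$.

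For the measurability claim I would argue that, since $Du^{(z)}$ is continuous by Remark \ref{ss}, each coordinate $x\mapsto\lambda_k^{1/2}D_{e_k}u^{(z)}(x)=\langle(-A)^{1/2}Du^{(z)}(x),e_k\rangle$ is continuous, so the partial sums $\sum_{k=1}^N\lambda_k^{1/2}D_{e_k}u^{(z)}(x)\,e_k$ are continuous maps $H\to H$; by the bound just proved they converge in $H$ to $(-A)^{1/2}Du^{(z)}(x)$ at every $x$, and a pointwise limit of continuous functions is Borel (and bounded by $\frac{\pi}{\sqrt2}\|f\|_0$).

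For part (ii) I would first prove the convergence for finite-dimensional test vectors and then remove this restriction using the uniform bound from (i). Fix $m$ and set $h_m=\pi_m h$; since $h_m\in D((-A)^{1/2})$ and, by (i) applied to $f_n$, $Du_n^{(z)}(x)\in D((-A)^{1/2})$, self-adjointness of $(-A)^{1/2}$ gives $\langle(-A)^{1/2}Du_n^{(z)}(x),h_m\rangle=D_{(-A)^{1/2}h_m}u_n^{(z)}(x)=\int_0^\infty e^{-\lambda t}D_{(-A)^{1/2}h_m}P_t^{(z)}f_n(x)\,dt$. For fixed $t>0$ the representation \eqref{e3} together with $f_n\to f$ pointwise and the dominated convergence theorem gives $D_gP_t^{(z)}f_n(x)\to D_gP_t^{(z)}f(x)$ for any $g$; the uniform bound $|D_gP_t^{(z)}f_n(x)|\le C_1 t^{-1/2}|g|\,C$ from \eqref{wdc} then provides an integrable majorant in $t$, so a second application of dominated convergence yields $\langle(-A)^{1/2}Du_n^{(z)}(x),h_m\rangle\to\langle(-A)^{1/2}Du^{(z)}(x),h_m\rangle$ as $n\to\infty$, for each fixed $m$.

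Finally I would pass from $h_m$ to an arbitrary $h\in H$. By (i), the vectors $(-A)^{1/2}Du_n^{(z)}(x)$ and their prospective limit $(-A)^{1/2}Du^{(z)}(x)$ all have $H$-norm at most $\frac{\pi}{\sqrt2}C$ (note $\|f\|_0\le C$), so $|\langle(-A)^{1/2}Du_n^{(z)}(x)-(-A)^{1/2}Du^{(z)}(x),\,h-h_m\rangle|\le\sqrt2\,\pi C\,|h-h_m|_H$ uniformly in $n$. A standard $\varepsilon/3$ argument---choose $m$ to make $|h-h_m|_H$ small, then $n$ large using the finite-dimensional convergence just established---gives the claim \eqref{s1144}. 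The only genuinely delicate point is this last interchange of the two limits ($n\to\infty$ and $m\to\infty$), which is precisely where the uniform-in-$n$ bound furnished by part (i) is indispensable; everything else reduces to dominated convergence and the self-improving inequality.
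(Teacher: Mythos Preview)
Your proof is correct and follows essentially the same route as the paper's: both parts rest on Lemma \ref{ss1} with $s=0$, dominated convergence, and the uniform bound from (i) to extend the weak convergence in (ii) from a dense set to all of $H$. The only cosmetic difference is in (i), where the paper packages your test-vector computation as a one-line adjoint argument (the estimate $|\langle Du^{(z)}(x),(-A)^{1/2}h'\rangle|\le \frac{\pi}{\sqrt2}\|f\|_0|h'|_H$ for $h'\in D((-A)^{1/2})$ says precisely that $Du^{(z)}(x)\in D(((-A)^{1/2})^*)=D((-A)^{1/2})$ with the stated bound), but your explicit self-improvement via $h_N=\sum_{k\le N}\lambda_k v_k e_k$ is an equivalent and equally valid realization of the same idea.
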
  
\begin{proof} (i) Set $B = (-A)^{1/2} $ with domain $D(B)$. Let us fix $x,  z  \in H$ and recall \eqref{s55}. 

We know by Lemma \ref{ss1} with $s=0$ that $k_0 = Du^{(z)}(x) $ verifies $|\langle k_0, B  h'\rangle| \le \frac{\pi} {\sqrt 2} \| f\|_0 | h'|_H$, $h' \in D(B)$. 
This implies that $k_0 \in D(B^*)$ and $|B^* k_0|_H \le  \frac{\pi} {\sqrt 2} \| f\|_0$. The assertion follows since  $B$ is  self-adjoint. Moreover, since $\sum_{k=1}^N \sqrt{\lambda_k}\,  D_{e_k} u^{(z)} e_k $ converges pointwise to $(-A)^{1/2} Du^{(z)}$ as $N \to \infty$ 
 we get the desired Borel measurability. 

 \smallskip 
(ii) To prove \eqref{s1144} we first note that, for $t>0$, $ D_h {P_{t}^{(z)} } f_n(x) \to D_h {P_{t}^{(z)} } f(x)$, as $n \to \infty, $ $x,h \in H$ 
 (see the argument after  formula \eqref{s331}). Moreover, $|D {{P_{t}^{(z)}}} f_n(x)|_H \le \frac{c \| f_n\|_0 }{\sqrt{t}}$ $\le \frac{c \,C}{\sqrt{t}}$, 
 $t>0$. Hence we have, for any $k \in D( (-A)^{1/2} )$, $x \in H, $
$$
 \lim_{n \to \infty} \langle Du^{(z)}_n(x) ,(-A)^{1/2} k \rangle    = 
  \langle Du^{(z)} (x) ,(-A)^{1/2} k \rangle.  
$$
By the first assertion  we deduce  that $
 \lim_{n \to \infty} \langle (-A)^{1/2} Du^{(z)}_n(x) , k \rangle    = 
  \langle (-A)^{1/2} Du^{(z)} (x) , k \rangle. $ It follows easily that 
 \eqref{s1144} holds, for any $h \in H.$
\end{proof}
\begin{remark} {\em 
Note that if $G \in B_b(H,H)$, then \eqref{s1144} implies that
\begin{equation}
\label{w12}
\lim_{n \to \infty} \langle (-A)^{1/2} Du^{(z)}_n(x) , G(x)\rangle =
  \langle (-A)^{1/2}   Du^{(z)}(x) , G(x)\rangle,\;\; x,z \in H. 
\end{equation}
This fact will be useful in the sequel. \qed   
}
\end{remark}
Recall  that 
\begin{equation*} 
 u^{(z)}(x) = \int_0^{\infty} e^{- \lambda t}   {P_{t}^{(z)} } f(x) dt =
 R^{(z)}(\lambda) f(x)
 \end{equation*}  
where the resolvent $R^{(z)}(\lambda): B_b(H) \to B_b(H)$ verifies the identity
\begin{gather*}
R^{(z)}(\mu) - R^{(z)}(\lambda) =  (\lambda - \mu) R^{(z)}(\mu) \, R^{(z)}(\lambda),\;\; \lambda, \mu>0.
\end{gather*}
 By    Lemma \ref{ss1} we can also obtain the following  new   regularity result.  
  It implies ${\cal C}^1$-Zygmund regularity for $Du$; see Appendix  
  (in finite dimension for Ornstein-Uhlenbeck semigroups such implication  
  %fact that the next result implies ${\cal C}^1$-Zygmund regularity for $Du$ 
  is proved in  Lemma 3.6 and Proposition 3.7 with $\theta =1/2$; see also Remark \ref{dai}). 
 %This  is new even if $z=0$ in infinite dimension. 
% It   is an infinite dimensional generalization of a result proved in   %Lemma % 
% 3.6 of \cite{DL}. The proof in \cite{DL} uses   results from interpolation %
% theory.  
% which  (see Lunardi and interpolazione ?? no). 
% Here we use a
%and direct 
 % self-contained 
% different approach.  
\begin{theorem}\label{sr} Let ${\small C_2 = {\sqrt{ 2}\,  \pi}  \frac{(1+ \omega)^{1/2}}{(\omega)^{1/2}} \, + 4 C_1 } $ where $\omega > 0$ and $C_1$  are respectively defined in Hypothesis \ref{d1}  and formula \eqref{e5}.
   For any   $s\in [0,1]$, $x,z\in H,$ $f \in B_b(H)$, $\lambda>0$, we have: 
\begin{equation}\label{dee}
|{P_s^{(z)}} D u^{(z)}(x) - D u^{(z)}(x) |_H  \le  C_2 \,  s^{1/2} \|f \|_{0}.  
\end{equation}
\end{theorem}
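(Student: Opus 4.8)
The plan is to reduce everything to a scalar directional statement: fixing $h\in H$ with $|h|_H\le 1$, I will bound $|P_s^{(z)}D_h u^{(z)}(x)-D_h u^{(z)}(x)|$ and then take the supremum over such $h$, using that $\langle P_s^{(z)}Du^{(z)}(x),h\rangle=P_s^{(z)}[D_h u^{(z)}](x)$ and $|P_s^{(z)}Du^{(z)}(x)-Du^{(z)}(x)|_H=\sup_{|h|_H\le 1}|P_s^{(z)}D_h u^{(z)}(x)-D_h u^{(z)}(x)|$. Writing $D_h u^{(z)}(x)=\int_0^\infty e^{-\lambda t}D_h P_t^{(z)}f(x)\,dt$ (cf. \eqref{s55}) and carrying $P_s^{(z)}$ under the integral sign, I will split
$$P_s^{(z)}D_h u^{(z)}(x)-D_h u^{(z)}(x)=\mathbf A+\mathbf B,$$
with $\mathbf A=\int_0^\infty e^{-\lambda t}\big[P_s^{(z)}D_h P_t^{(z)}f(x)-D_h P_{t+s}^{(z)}f(x)\big]\,dt$ and $\mathbf B=\int_0^\infty e^{-\lambda t}\big[D_h P_{t+s}^{(z)}f(x)-D_h P_t^{(z)}f(x)\big]\,dt$.

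For $\mathbf A$ I will use the elementary commutation rule $D_h P_r^{(z)}g=P_r^{(z)}D_{e^{rA}h}g$, valid for $g\in C^1_b(H)$ (differentiate the Mehler formula \eqref{gt} in $x$; here $g=P_t^{(z)}f\in C_b^\infty(H)$ for $t>0$). Applied with $r=s$ and $g=P_t^{(z)}f$ it gives $D_h P_{t+s}^{(z)}f=P_s^{(z)}D_{e^{sA}h}P_t^{(z)}f$, so the integrand of $\mathbf A$ equals $P_s^{(z)}\big[D_{(I-e^{sA})h}P_t^{(z)}f\big]$; the key point is that the new direction $(I-e^{sA})h$ stays in $H$. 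Interchanging $P_s^{(z)}$ with the $t$-integral (cf. \eqref{uu1}) then yields $\mathbf A=P_s^{(z)}\big[D_{(I-e^{sA})h}u^{(z)}\big](x)$. Since $P_s^{(z)}$ is a contraction on $B_b(H)$ and, by Theorem \ref{ss13}, $Du^{(z)}(w)\in D((-A)^{1/2})$ with $\|(-A)^{1/2}Du^{(z)}\|_0\le\frac{\pi}{\sqrt2}\|f\|_0$, self-adjointness of $(-A)^{1/2}$ gives $|\mathbf A|\le\|D_{(I-e^{sA})h}u^{(z)}\|_0\le\|(-A)^{1/2}Du^{(z)}\|_0\,|(-A)^{-1/2}(I-e^{sA})h|_H$. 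The operator $(-A)^{-1/2}(I-e^{sA})$ is diagonal with eigenvalues $\lambda_k^{-1/2}(1-e^{-s\lambda_k})$, and the scaling $x=s\lambda_k$ gives $\lambda_k^{-1/2}(1-e^{-s\lambda_k})=s^{1/2}(1-e^{-x})x^{-1/2}\le s^{1/2}$, hence $\|(-A)^{-1/2}(I-e^{sA})\|_{\mathcal L}\le s^{1/2}$ and $|\mathbf A|\le\frac{\pi}{\sqrt2}\,s^{1/2}|h|_H\|f\|_0$.

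For $\mathbf B$, which must also be $O(s^{1/2})$, I will change variables $\tau=t+s$ in the first integral and split $\int_0^\infty=\int_0^s+\int_s^\infty$, obtaining $\mathbf B=(e^{\lambda s}-1)\int_s^\infty e^{-\lambda\tau}D_h P_\tau^{(z)}f\,d\tau-\int_0^s e^{-\lambda\tau}D_h P_\tau^{(z)}f\,d\tau$. Both pieces I will control with the first-order bound $|D_h P_\tau^{(z)}f(x)|\le C_1\tau^{-1/2}|h|_H\|f\|_0$ from \eqref{wdc}: the second piece gives $2C_1 s^{1/2}|h|_H\|f\|_0$ at once, while for the first I bound $\tau^{-1/2}\le s^{-1/2}$ on $[s,\infty)$ and use $(e^{\lambda s}-1)\frac{e^{-\lambda s}}{\lambda}=\frac{1-e^{-\lambda s}}{\lambda}\le s$, which cancels the dangerous factor $e^{\lambda s}$ and yields $C_1 s^{1/2}|h|_H\|f\|_0$ uniformly in $\lambda>0$. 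Collecting the estimates gives $|P_s^{(z)}D_h u^{(z)}(x)-D_h u^{(z)}(x)|\le\big(\tfrac{\pi}{\sqrt2}+3C_1\big)s^{1/2}|h|_H\|f\|_0$, and taking the supremum over $|h|_H\le1$ proves \eqref{dee} with a constant no larger than $C_2$. I expect the only genuinely delicate points to be the commutation step in $\mathbf A$ (one must route through $P_s^{(z)}D_{e^{sA}h}P_t^{(z)}f$ so that the direction $(I-e^{sA})h$ stays bounded, a naive rearrangement producing the unbounded $e^{-sA}$) and the $\lambda$-uniform cancellation in the first piece of $\mathbf B$.
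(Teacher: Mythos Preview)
Your argument is correct, and it takes a genuinely different route from the paper's. Both proofs hinge on the same commutation identity $D_h P_s^{(z)}g=P_s^{(z)}D_{e^{sA}h}g$ for $g\in C^1_b(H)$, but they exploit it in opposite directions. The paper first reduces to $\lambda\in(0,1]$ via the resolvent identity, then splits off $\int_0^s e^{-\lambda t}D_h P_t^{(z)}f\,dt$ and studies $T(h)=\int_0^\infty e^{-\lambda t}\big[P_s^{(z)}(D_h P_t^{(z)}f)-e^{-\lambda s}D_h P_{t+s}^{(z)}f\big]dt$; on finite projections $h_N$ the commutation turns this into $\int_0^\infty e^{-\lambda t}D_{S_N(s,\lambda)h}P_{t+s}^{(z)}f\,dt$ with the \emph{unbounded} direction $S_N(s,\lambda)h=\sum_k h_k(e^{s\lambda_k}-e^{-\lambda s})e_k$, which is then controlled by the time-shifted Lemma~\ref{ss1} (the factor $(-A)^{-1/2}e^{sA}$ absorbs the growth of $S_N$). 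You instead split into $\mathbf A+\mathbf B$ without the $e^{-\lambda s}$ weight, use the commutation the other way so that the resulting direction $(I-e^{sA})h$ stays bounded in $H$, and collapse $\mathbf A$ to $P_s^{(z)}\big[D_{(I-e^{sA})h}u^{(z)}\big]$, which is controlled by Theorem~\ref{ss13} (the $s=0$ case of the lemma) together with $\|(-A)^{-1/2}(I-e^{sA})\|_{\mathcal L}\le s^{1/2}$; your $\mathbf B$ is then an elementary residual handled with \eqref{wdc} and the neat identity $(e^{\lambda s}-1)\,\lambda^{-1}e^{-\lambda s}=\lambda^{-1}(1-e^{-\lambda s})\le s$.

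What each approach buys: your argument is shorter, avoids the preliminary reduction to $\lambda\le 1$ and the finite-rank truncations $h_N$, and yields the sharper constant $\tfrac{\pi}{\sqrt2}+3C_1$, independent of both $\lambda$ and $\omega$ (hence strictly below the stated $C_2$). The paper's route keeps Lemma~\ref{ss1} in its full $s$-dependent form, so Theorems~\ref{ss13} and~\ref{sr} appear as parallel consequences of one estimate rather than the second being a corollary of the first.
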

\begin{proof}
It is enough to prove the assertion when  $\lambda \in (0,1]$. Indeed once we have  proved
\begin{equation*}
|{P_s^{(z)}} D u^{(z)}(x) - D u^{(z)}(x) |_H  \le  C \,  s^{1/2} \|f \|_{0},\;\; \lambda \in (0,1],  
\end{equation*}
we write, for  $\lambda >1$,  using the previous resolvent identity  
\begin{gather*}
 u^{(z)}(x)  =  \int_0^{\infty} e^{-  t}  {P_{t}^{(z)} } f(x) dt  + {(1- \lambda )} \int_0^{\infty} e^{-   t}   {P_{t}^{(z)} }  u^{(z)} (x) dt.
 \end{gather*}
  It follows  that
%\begin{gather*}
 $|{P_s^{(z)}} D u^{(z)}(x) - D u^{(z)}(x) |_H   
 %C\,   s^{1/2} [\|f \|_{0} + {(\lambda-1)} \| u^{(z)} \|_0  
 \le   C   s^{1/2} \big[\|f \|_{0} + \frac{ \lambda-1}{\lambda} \| f \|_0 \big]
   \le 2 C\,   s^{1/2} \|f \|_{0},  
$ 
%\end{gather*}
since $ \| u^{(z)} \|_0 \le \frac{1}{\lambda} \|f \|_{0}$.  
 Now we  fix  $x,z\in H$, $s \in (0,1]$, $\lambda \in (0,1]$.   
  We have  
\begin{gather*} 
%\label{dd}  
{P_s^{(z)}} D_h u^{(z)}(x) - D_h u^{(z)}(x) = \langle {P_s^{(z)}} D u^{(z)}(x) - D u^{(z)}(x), h \rangle  
\\ 
= \int_0^{\infty} e^{-\lambda t } [{P_s^{(z)}} (D_h {P_t^{(z)}} f)(x) -   D_{h} P_{t}^{(z)}f(x)]dt.  
\end{gather*}
Note that 
\begin{gather*}
  \int_0^{\infty} e^{-\lambda t }  D_{h} P_{t}^{(z)}f(x)dt = 
      \int_{0}^{s}e^{-\lambda t }  {    D_{h} P_{t}^{(z)}f(x)dt } \, + \,  e^{-\lambda 
 s}
\int_{0}^{\infty} e^{-\lambda t }  D_{h} P_{t+s}^{(z)}f(x)dt .
\end{gather*}
Hence 
\begin{gather*}
{P_s^{(z)}} D_h u^{(z)}(x) - D_h u^{(z)}(x) = \int_0^{\infty} e^{-\lambda t } {P_s^{(z)}} (D_h {P_t^{(z)}} f)(x) - e^{-\lambda s}
\int_{0}^{\infty} e^{-\lambda t }  D_{h} P_{t+s}^{(z)}f(x)dt 
\\ {   - }
 \int_{0}^{s} {   e^{-\lambda t } }   D_{h} P_{t}^{(z)}f(x)dt. 
   \end{gather*}
Since 
%\begin{equation} \label{d32}
$\Big | \int_{0}^{s}e^{-\lambda u }   D_{h} P_{u}^{(z)}f(x)dt \Big| \le 2 C_1 \sqrt{s} |h|_H \| f\|_0, $   
%\end{equation}
   we concentrate on  $T_{x,s,f, \lambda}^{(z)} :  H \to \R$:
%We concentrate on the bounded linear operator
 \begin{equation}\label{w22}
T_{x,s,f, \lambda }^{(z)} \,( h)  = \int_0^{\infty} e^{-\lambda t } [{P_s^{(z)}} (D_h {P_t^{(z)}} f)(x) - e^{-\lambda s} D_{h} {P_{t+s}^{(z)}}f(x)]dt, \; h \in H.
\end{equation}
This linear    functional is well-defined because
\begin{gather*}
|{P_s^{(z)}} (D_h P_t^{(z)} f)(x) - e^{-\lambda s} D_{h} {P_{t+s}^{(z)}}f(x)|  
  \le \| D_h P_t^{(z)} f\|_0 + c(\lambda)\| D_{h} {P_{t+s}^{(z)}} f \|_{0} 
\le  \frac{c}{ \sqrt{t}} |h| \| f\|_0,
\end{gather*}
$ t>0, \, h \in H$ (using \eqref{e5}).  Moreover, it  is easy to check that it is linear and bounded (to this purpose, note that, for any $t>0$, the mapping $h \mapsto {P_s^{(z)}} (D_h P_t^{(z)} f)(x) -  e^{-\lambda s}D_{h} {{P_{t+s}^{(z)}}}f(x)$ is linear). 
   We will prove that 
\begin{equation}\label{2qq}
|T_{x,s,f, \lambda}^{(z)} (h) | \le  \frac{\pi} {\sqrt 2} \frac{(1+ \omega)^{1/2}}{(\omega)^{1/2}} \, s^{1/2} \, |h| \, \| f\|_0.
\end{equation}
 To this purpose let us consider $h = e_k$. 
Since in particular, for $t>0,$ $P_t^{(z)} f \in C^1_b(H)$   we can  differentiate under the integral sign and obtain  
\begin{gather*}
D_{e_k} {P_{t+s}^{(z)}} f(x) =
D_{e_k} {P_s^{(z)}} (P_t^{(z)} f)(x) = \int_H \langle D P_t^{(z)} f(e^{sA}x  + y + \Gamma_sz) , e^{sA} e_k \rangle \mu_s(dy)
\\
= e^{-\lambda _k s} {P_s^{(z)}} (D_{e_k} P_t^{(z)} f)(x), \;\; \; t>0.
\end{gather*}
Hence, for any $k \ge 1,$ $t>0,$
\begin{gather} \label{wqr}
{P_s^{(z)}} (D_{e_k} P_t^{(z)} f)(x) -  e^{-\lambda s} D_{e_k} {P_{t+s}^{(z)}}f(x) = [e^{\lambda_k s } -e^{-\lambda s}] D_{e_k} {P_{t+s}^{(z)}} f(x). 
\end{gather}
For  $h \in H$ we  define 
$
\pi_N h = h_N
$ $= \sum_{k=1}^N h^{(k)} e_k$ (cf. \eqref{pp1}). We have
\begin{gather*} 
 \begin{array}{l} \ds 
\int_0^{\infty} e^{- \lambda t} [{P_s^{(z)}} (D_{h_N} P_t^{(z)} f)(x) - e^{-\lambda s} D_{h_N} {P_{t+s}^{(z)}}f(x)] dt \\ \ds 
 = \sum_{k=1}^N h_k [e^{\lambda_k s } - e^{-\lambda s}] \int_0^{\infty} e^{- \lambda t}  D_{e_k} {P_{t+s}^{(z)}}f(x) dt.
 \end{array}  
\end{gather*}
 Let $S_N(s, \lambda ): H \to H$, 
%\begin{gather*}
 $S_N(s, \lambda) h $ $= \sum_{k=1}^N h_k (e^{s\lambda_k} -e^{-\lambda s}) e_k,$ $ h \in H.$
%\end{gather*}
We have
\begin{gather*}
 %\begin{array}{l}
T_{x,s,f, \lambda}^{(z)} ( h_N)  = 
%\int_0^{\infty} e^{-\lambda t } [{P_s^{(z)}} (D_{h_N} P_t^{(z)} f)(x) - e^{-%
 % \lambda s} D_{h_N} {P_{t+s}^{(z)}}f(x)]dt
%\\
% = 
\sum_{k=1}^N h_k [e^{\lambda_k s } - e^{-\lambda s}] \! 
\! \int_0^{\infty} e^{- \lambda t}  D_{e_k} {P_{t+s}^{(z)}}f(x) dt
= \!\! \int_0^{\infty} e^{- \lambda t}  D_{S_N (s, \lambda) h} {P_{t+s}^{(z)}}f(x) dt.
%\end{array}
\end{gather*}   
 Using Lemma \ref{ss1} we find
%\begin{gather*}
$ \ds |T_{x,s,f, \lambda}^{(z)} \,( h_N)|^2 
% \le  \frac{\pi^2} { 2}
% \|f\|_0^2 \,\,   | (-A)^{-1/2} e^{sA} S_N (s, \lambda) h |^2_H 
 %  \\ 
 \le \frac{\pi^2} { 2} \|f\|_0^2 \, \,  \| (-A)^{-1/2} e^{sA} S_N (s,\lambda ) \|_{\cal L}^2\,\, |h_N|^2_H.$
%\end{gather*} 

Since
 %\begin{gather*}
 $\| (-A)^{-1/2} e^{sA} S_N (s, \lambda ) \|_{\cal L} $
%=  \sup_{1 \le k \le N} \, [ (1- e^{-\lambda_k s } ) \lambda_k^{-1/2} ]
%\\
$\le   \sup_{k \ge 1} [(1- e^{-[\lambda  + \lambda_k] s } )^{1/2}\,  \lambda_k^{-1/2}] \le  \,  s^{1/2} \sup_{k \ge 1} \frac{(1 + \lambda_k)^{1/2}}{(\lambda_k)^{1/2}},$
%\end{gather*}
we get
%\begin{equation}\label{erf}
 $|T_{x,s,f}^{(z)} \,( h_N)|^2 \le \frac{\pi^2} { 2} \frac{1+ \omega}{\omega}\,   \|f\|_0^2 |h_N|^2 \, s.$ 
%\end{equation}
 As $N \to \infty$ we get \eqref{2qq}.
 % and this finishes the proof.  
% The assertion follows.
 \end{proof} 
 %%%%
\def\ciao2{ 
 \begin{remark} {\em Estimate \eqref{dee} is not straightforward. It is equivalent to    
  \begin{gather*}
|{P_s^{(z)}}  D u^{(z)}(x) - D u^{(z)}(x ) |_H^2 =  \sum_{k \ge 1} |{P_s^{(z)}} D_{e_k} u^{(z)}(x) -  D_{e_k} u^{(z)}(x) |^2 \le C  s \|f \|_{0}^2 \;\; \text{where}
\\
 \sum_{k \ge 1} \Big | \int_0^{\infty} e^{-\lambda t } [{P_s^{(z)}} (D_{e_k} P_t^{(z)} f)(x) - D_{k} {P_{t+s}^{(z)}}f(x)]dt  \Big|^2 
\\  
=
\sum_{k \ge 1} \Big | \int_0^{\infty} e^{-\lambda t } \,  [e^{\lambda_k s } -1] \, 
|D_k {P_{t+s}^{(z)}} f(x)| dt  \Big|^2 .
\end{gather*}
 If instead of using Lemma \ref{ss1} we 
  put     the modulus inside the integral, we only get 
  \begin{gather} \label{222}
\int_0^{\infty} e^{-\lambda t } \,  |e^{\lambda_k s } -1| \, 
|D_k {P_{t+s}^{(z)}} f(x)| dt
\le C \| f\|_{0} \int_0^{\infty} \,  |e^{\lambda_k s } -1| \, \frac{ (\lambda_k)^{1/2}e^{-t\lambda_k} \, e^{-s\lambda_k} }
{
(1-e^{-2t\lambda_k})^{1/2}}  dt
\\ \notag
=  C \| f\|_{0} (1- e^{-\lambda_k s } ) \,\int_0^{\infty} \,   \, \frac{ (\lambda_k)^{1/2}e^{-t\lambda_k}}
{
(1-e^{-2t\lambda_k})^{1/2}}  dt
= 
  C \| f\|_{0} \frac{ (1- e^{-\lambda_k s } )} { \lambda_k^{1/2} }
\int_0^{\infty} \,   \, \frac{ e^{-u} \,  }
{
(1-e^{-2 u})^{1/2}}  du.
\end{gather}
 Now define
$
\phi(s) = \sum_{k \ge 1} \frac{ (1- e^{-\lambda_k s } )^2 }{\lambda_k}, \;\; s \in [0,1].
$

This  is a continuous function on $[0,1]$ such that $\phi(0)=0$. However   in general {\sl it is 
 not true that $\phi(s) \le C s$, $s \in [0,1]$} (one can consider the case $\lambda_k = k^2$). By the straightforward computations  in \eqref{222} one cannot get   \eqref{dee}.
 }
\end{remark}
}
%%%
%The next sections are devoted to the proof of Theorem \ref{base}.   First %we prove weak existence for \eqref{sde}.   The proof of weak uniqueness %is more involved.  To this purpose we will  use the  martingale %characterisation of mild solutions, the optimal regularity result given %in Theorem \ref{ss13} and also localization principles for martingale % %problems.   

\section{Proof of weak existence of Theorem \ref{base}}

We will prove weak existence  by adapting  a  compactness approach of  \cite{GG}.  This approach uses the factorization method introduced in  \cite{DKP} (this approach is also explained  in  Chapter 8 of \cite{DZ}).

Let us fix $x \in H$.  To construct the solution we start with some approximating mild solutions. We introduce, for each $m \ge 1$,  
$$
A_m = A \circ \pi_m ,\;\;\; A_m e_k =  - \lambda_k  \,  e_k,\;\; k = 1, \ldots m, 
$$
 $   A_m e_k =0, $ $k >m;$  here  
 $\pi_m=\sum_{j=1}^me_j\otimes e_j$ ($(e_j)$ is the basis of eigenvectors  of $A$; see \eqref{pp1}).

  For each $m$  there exists a weak mild solution $X_m= (X_m(t))_{t \ge 0}$ on some filtered probability space, possibly depending on $m$  (such solution can also be constructed by the Girsanov theorem, see \cite{GG0}, \cite{DZ} and  \cite{DFPR}). 
  
%\begin{remark} {\em  
Usually the mild solutions $X^m$ are constructed on a time interval $[0,T]$.  However there is a standard procedure based on the Kolmogorov extension theorem to define the solutions  on $[0, \infty)$. On this respect, we refer to  Remark 3.7, page 303, in \cite{KS}.  
%}
%\end{remark}

We know that  
 \begin{equation}\label{2ww}
 %\begin{array}{l}
X_m (t)=e^{tA} x +\int_{0}^{t}e^{\left(  t-s\right)  A} (-A_m)^{1/2}F^{}(X_m(s))
ds+\int_{0}^{t}e^{\left(  t-s\right)  A}dW_{s},\;\;\; t \ge 0.
%\end{array} 
\end{equation}
Recall    that, for any $t \ge 0,$ the stochastic convolution $W_A(t)= \int_{0}^{t}e^{\left(  t-s\right)  A}dW_{s}$ is a Gaussian random variabile with law $N(0,Q_t)$.  Let $p > 2$ and $q = \frac{p}{p-1} <2$.  We find (using also \eqref{ewd} {   and the H\"older inequality) }
\begin{gather*}
|X_m (t)|^p_H \le c_p ( |e^{tA} x|^p_H + \big | \int_{0}^{t}e^{\left(  t-s\right)  A} (-A_m)^{1/2}F^{}(X_m(s))ds |^p_H +
| W_A(t)|^p_H ) 
%\\
%\le c_T| x|^p_H  +  M^p \,  C_F^p \Big ( \int_{0}^{t}|  (t-s)^{-1/2} (1 + |%X_m(s) |_H ds \Big)^p+
%c_p | W_A(t)|^p_H  
\\ \le 
  c_T| x|^p_H +  c_T ( \int_{0}^{t}  (t-s)^{-q/2} ds)^{p/q} \cdot    \int_{0}^{t} (1+ |X_m(s) |_H^p  )  ds  +
c_p | W_A(t)|^p_H 
 \\
\le C_T | x|^p_H + C_T +  C_T  \int_{0}^{t}   |X_m(s) |^p_H ds +
C_T | W_A(t)|^p_H,\;\;  t \in [0,T].  
\end{gather*}
 By the Gronwall lemma we find the bound
% for $t \in [0,T]$
% \begin{gather*}
%|X_m (t)|^p_H \le   C_T (| x|^p_H + 1+ | W_A(t)|^p_H)
% +  k_T  \int_{0}^{T}    | W_A(s)|^p_H ds.  
%\end{gather*}
% We deduce the bound 
\begin{equation} \label{sd1}
\begin{array}{l}
\sup_{m \ge 1} \sup_{t \in [0,T]}\E|X_m (t)|^p_H = {   C_T } < \infty.
\end{array}  
\end{equation} 
{\sl The mild solution $X$ will be a weak limit of solutions $(X_m)$.} To this purpose we need some compactness results.
 The next  result is proved in \cite{GG} 
(the proof  uses that $(e^{tA})$ is a compact semigroup). 
\begin{proposition}
\label{p8.40}
If
$0<\frac{1}{p}<\alpha \leq 1$ then the
operator $G_{\alpha }: L^{p}(0,T;H) \to C([0,T];H) $
\begin{displaymath}
G_{\alpha }f(t)=\frac{\sin \pi \alpha}{\pi}  \int_{0}^{t}(t-s)^{\alpha -
1} e^{(t-s)A}f(s)ds,\quad f \in L^{p}(0,T;H),\;t \in [0,T], \;\; \text{is compact}. 
\end{displaymath}
 \end{proposition}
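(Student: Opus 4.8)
The plan is to deduce compactness of $G_\alpha$ from the vector-valued Arzel\`a--Ascoli theorem. Recall that a bounded set $K \subset C([0,T];H)$ is relatively compact if and only if (i) $K$ is uniformly equicontinuous and (ii) for every $t\in[0,T]$ the section $\{g(t):g\in K\}$ is relatively compact in $H$. I stress that in infinite dimensions \emph{both} conditions are genuinely needed: unlike the scalar case, uniform H\"older regularity of the family alone does not force compactness in $C([0,T];H)$. I would apply this with $K=G_\alpha(\overline B)$, where $\overline B$ is the closed unit ball of $L^p(0,T;H)$. First, writing $q=\frac{p}{p-1}$, the hypothesis $\alpha>\frac1p$ is exactly $(\alpha-1)q>-1$; since $(e^{tA})$ is a contraction semigroup ($\|e^{rA}\|_{\cal L}\le 1$ by \eqref{e1a}) and $|\sin\pi\alpha|\le 1$, H\"older's inequality gives
\[
|G_\alpha f(t)|_H \le \frac{1}{\pi}\Big(\int_0^t r^{(\alpha-1)q}\,dr\Big)^{1/q}\|f\|_{L^p} \le C_T\,\|f\|_{L^p},
\]
so $G_\alpha$ is well-defined with bounded $H$-valued sections and $K$ is bounded.

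For the pointwise compactness (ii), fix $t\in(0,T]$ and $\epsilon\in(0,t)$ and split
\[
G_\alpha f(t) = \frac{\sin\pi\alpha}{\pi}\int_{t-\epsilon}^t (t-s)^{\alpha-1}e^{(t-s)A}f(s)\,ds + \frac{\sin\pi\alpha}{\pi}\int_0^{t-\epsilon}(t-s)^{\alpha-1}e^{(t-s)A}f(s)\,ds.
\]
The first term is bounded in $H$ by $C\,\epsilon^{\alpha-1/p}\|f\|_{L^p}$, uniformly in $f\in\overline B$, by the same H\"older estimate on $[t-\epsilon,t]$, hence small as $\epsilon\to0$. In the second term $t-s\ge\epsilon$, so I factor $e^{(t-s)A}=e^{(\epsilon/2)A}\,e^{(t-s-\epsilon/2)A}$ and pull the fixed operator $e^{(\epsilon/2)A}$ out of the integral; the remaining integral stays in a bounded subset of $H$, and $e^{(\epsilon/2)A}=\sum_k e^{-\lambda_k\epsilon/2}\,e_k\otimes e_k$ is \emph{compact}, being a norm-limit of finite-rank operators by \eqref{e1a}. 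Thus the second term ranges over a relatively compact set, so $\{G_\alpha f(t):f\in\overline B\}$ lies within distance $C\epsilon^{\alpha-1/p}$ of a relatively compact set for every $\epsilon$, hence is totally bounded.

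For the equicontinuity (i), take $0\le t<t+h\le T$ and write
\begin{gather*}
G_\alpha f(t+h) - G_\alpha f(t) = \frac{\sin\pi\alpha}{\pi}\int_t^{t+h}(t+h-s)^{\alpha-1}e^{(t+h-s)A}f(s)\,ds
\\
+ \frac{\sin\pi\alpha}{\pi}\int_0^t \big[(t+h-s)^{\alpha-1}e^{(t+h-s)A} - (t-s)^{\alpha-1}e^{(t-s)A}\big]f(s)\,ds.
\end{gather*}
The first integral is $\le C\,h^{\alpha-1/p}\|f\|_{L^p}$ by H\"older. The second is the crux, and I expect the operator-kernel difference to be the \textbf{main obstacle}: one must reconcile the singularity at $s=t$ with the increment $h$. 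I would organize it by an $\epsilon/3$-type splitting at $s=t-\eta$: on $[t-\eta,t]$ both kernels are handled by the H\"older tail bound ($\le C\eta^{\alpha-1/p}$, uniform in $f\in\overline B$), while on $[0,t-\eta]$ the difference $(t+h-s)^{\alpha-1}e^{(t+h-s)A}-(t-s)^{\alpha-1}e^{(t-s)A}$ is estimated in $\|\cdot\|_{\cal L}$ using continuity of $r\mapsto r^{\alpha-1}$, the norm-continuity of the analytic semigroup on $(0,\infty)$, and $\|Ae^{rA}\|_{\cal L}\le c/r$; this tends to $0$ as $h\to0$ for fixed $\eta$, yielding a modulus of continuity independent of $f\in\overline B$.

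Verifying (i) and (ii), Arzel\`a--Ascoli gives that $K=G_\alpha(\overline B)$ is relatively compact in $C([0,T];H)$, i.e.\ $G_\alpha$ is compact. To sidestep the delicate kernel-difference estimate, one may instead argue by approximation: for $\epsilon>0$ set $G_\alpha^\epsilon f(t)=\frac{\sin\pi\alpha}{\pi}\int_0^{(t-\epsilon)\vee0}(t-s)^{\alpha-1}e^{(t-s)A}f(s)\,ds$; the H\"older tail bound gives $\|G_\alpha-G_\alpha^\epsilon\|_{\mathcal L(L^p,\,C([0,T];H))}\le C\,\epsilon^{\alpha-1/p}\to0$, while each $G_\alpha^\epsilon$ has a kernel bounded away from the singularity (so its equicontinuity is immediate from the now norm-continuous, bounded kernel) and factors through the compact operator $e^{(\epsilon/2)A}$ (so its sections are relatively compact), whence $G_\alpha^\epsilon$ is compact. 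Since the compact operators form a closed subspace, the uniform limit $G_\alpha$ is compact.
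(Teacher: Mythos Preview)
Your proof is correct and follows essentially the same route as the paper (which cites \cite{GG} for this result and reproduces the method in the proof of the variant Proposition~\ref{p8.41}): infinite-dimensional Arzel\`a--Ascoli, with pointwise compactness obtained by cutting off the singularity at $s=t$ and factoring the remainder through the compact operator $e^{\epsilon A}$, and equicontinuity from the H\"older tail bound together with norm-continuity of the kernel away from the singularity. Your alternative approximation argument via $G_\alpha^\epsilon$ is a clean repackaging of the same ingredients.
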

Below we consider a variant of the previous result. In the proof we use  estimate \eqref{ewd}.  
\begin{proposition}
\label{p8.41}
 Let $p>2$.   Then the
operator ${Q} : L^{p}(0,T;H) \to C([0,T];H)  $,
\begin{displaymath}
{Q} f(t)=\int_{0}^{t} (-A)^{1/2} e^{(t-s)A}f(s)ds,\quad f \in L^{p}(0,T;H),\;t \in [0,T], \;\; \text{is compact}. 
\end{displaymath}
% is compact from $L^{p}(0,T;H)$ into 
% $C([0,T];H)$.  
\end{proposition}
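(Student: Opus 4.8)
The plan is to factor $Q$ as the composition of the compact operator $G_\alpha$ from Proposition~\ref{p8.40} with an auxiliary bounded operator on $L^p(0,T;H)$, for a suitably chosen $\alpha$. Since $p>2$ we may fix $\alpha$ with $\frac1p<\alpha<\frac12$; both inequalities will be used below, and the freedom to pick such an $\alpha$ is precisely what the hypothesis $p>2$ provides. The starting point is the Beta-integral identity
\begin{equation*}
\int_s^t (t-r)^{\alpha-1}(r-s)^{-\alpha}\,dr = \frac{\pi}{\sin\pi\alpha},\qquad 0<s<t,
\end{equation*}
which, together with the semigroup law $e^{(t-s)A}=e^{(t-r)A}e^{(r-s)A}$ and the commutation of $(-A)^{1/2}$ with $e^{\tau A}$, yields the factorization
\begin{equation*}
(-A)^{1/2}e^{(t-s)A} = \frac{\sin\pi\alpha}{\pi}\int_s^t (t-r)^{\alpha-1}e^{(t-r)A}\,(r-s)^{-\alpha}(-A)^{1/2}e^{(r-s)A}\,dr.
\end{equation*}

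Next I would insert this identity into the definition of $Qf(t)$ and interchange the order of integration. Setting
\begin{equation*}
R_\alpha f(r) = \int_0^r (r-s)^{-\alpha}(-A)^{1/2}e^{(r-s)A}f(s)\,ds,
\end{equation*}
Fubini's theorem gives $Qf = G_\alpha(R_\alpha f)$, so that $Q=G_\alpha\circ R_\alpha$ (the constant $\frac{\sin\pi\alpha}{\pi}$ in the factorization matches the one built into $G_\alpha$, so no extra factor appears). The interchange is legitimate because, by \eqref{ewd}, the integrand is dominated by $c\,(t-r)^{\alpha-1}(r-s)^{-\alpha-1/2}|f(s)|_H$, whose $r$-integral reduces by scaling to a constant multiple of $(t-s)^{-1/2}$, leaving $\int_0^t (t-s)^{-1/2}|f(s)|_H\,ds$, finite since $f\in L^p$ with $p>2$.

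It then remains to verify that $R_\alpha\colon L^p(0,T;H)\to L^p(0,T;H)$ is bounded. Using \eqref{ewd} once more,
\begin{equation*}
|R_\alpha f(r)|_H \le c\int_0^r (r-s)^{-\alpha-1/2}|f(s)|_H\,ds,
\end{equation*}
which is the convolution of $|f|_H\in L^p(0,T)$ with the kernel $t\mapsto t^{-\alpha-1/2}$. Because $\alpha<\frac12$, this kernel belongs to $L^1(0,T)$, and Young's inequality for convolutions gives $\|R_\alpha f\|_{L^p}\le c\,\|t^{-\alpha-1/2}\|_{L^1(0,T)}\,\|f\|_{L^p}$. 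Since $\alpha>\frac1p$, Proposition~\ref{p8.40} applies and $G_\alpha\colon L^p(0,T;H)\to C([0,T];H)$ is compact; therefore $Q=G_\alpha R_\alpha$, a compact operator composed with a bounded one, is itself compact.

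The one delicate point is the bookkeeping of exponents in the factorization: one must simultaneously keep $\alpha>1/p$ (so that $G_\alpha$ is compact) and $\alpha+\tfrac12<1$ (so that the convolution kernel defining $R_\alpha$ is integrable), and these two requirements are compatible exactly when $1/p<1/2$, i.e.\ when $p>2$. The analytic estimates are routine once \eqref{ewd} is available; the real work lies in setting up the factorization and justifying the use of Fubini.
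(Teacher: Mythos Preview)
Your proof is correct and takes a genuinely different route from the paper. The paper argues directly via the infinite-dimensional Ascoli--Arzel\`a theorem: for each fixed $t$ it approximates $Q^t f = Qf(t)$ by the truncated operators $Q^{\varepsilon,t}f = e^{\varepsilon A}\int_0^{t-\varepsilon}(-A)^{1/2}e^{(t-\varepsilon-s)A}f(s)\,ds$, which are compact because $e^{\varepsilon A}$ is, and then checks equicontinuity by hand using \eqref{ewd} and the compactness of $(-A)^{1/2}e^{sA}$ for $s>0$. Your argument instead absorbs all the work into the already-proved Proposition~\ref{p8.40} via the factorization $Q=G_\alpha R_\alpha$; this is shorter and conceptually cleaner, and in fact mirrors the factorization trick the paper itself invokes later for the stochastic convolution $W_A = G_\alpha Y$. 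The trade-off is that the paper's proof is self-contained and makes explicit where compactness of the semigroup enters, while yours leverages that Proposition~\ref{p8.40} has already done that job. Both exploit $p>2$ in the same essential way: in the paper it appears as $q=p/(p-1)<2$ so that $s^{-q/2}\in L^1$; in your version it appears as the compatibility window $1/p<\alpha<1/2$.
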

\begin{proof} Since the proof is similar to the one of Proposition \ref{p8.40} we only give a sketch of the proof. 
  Denote by $| \cdot |_{p}$ the norm in
$L^{p}(0,T;H)$. According to the infinite 
dimensional version of the Ascoli-Arzel\'{a}
theorem one has to show that
\begin{enumerate}
\item[(i)] For arbitrary $t \in [0,T]$ the sets
$\{ {Q}f(t): \; |f|_{p}\leq 1\}$
 are relatively
compact in $H$.  

\item[(ii)] For arbitrary $\varepsilon >0$ there exists
$\delta >0$ such that
\begin{equation}
|{Q}f(t)-{Q}f(s)|_H \leq
\varepsilon,\;\; \text { if}\; \;  |f |_{p}\leq
1, \; |t-s|\leq \delta,\quad s,t \in [0,T].
\label{e8.17}
\end{equation}
\end{enumerate}
 To check (i)
let us fix $t \in (0,T]$ and define operators $Q^t $  and  $    Q^{\varepsilon,t }$ from  $L^{p}(0,T;H)$ into $H$, for
$\varepsilon  \in (0,t)$, 
\begin{displaymath}
{Q}^{t}f= Qf (t),\;\; \;\;
{Q}^{\varepsilon ,t}f=\int_{0}^{t-
\varepsilon } (-A)^{1/2} e^{(t-s)A} f(s)ds,\quad f \in
L^{p}(0,T;H).
\end{displaymath}
Since
 $
  {Q}^{\varepsilon ,t}f=e^{\varepsilon
A}\int_{0}^{t-\varepsilon } (-A)^{1/2} e^{(t-
\varepsilon -s) A}f(s)ds
$
and $e^{\varepsilon A},\varepsilon >0,$ is
compact, the  operators
${Q}^{\varepsilon ,t}$ are compact.
 Moreover, 
% setting
%$M= \sup_{t \in [0,T]}\|S(t)\|$, we have, 
by using \eqref{ewd} and the 
H\"{o}lder inequality  (setting $q=\frac{p}{p-1} <2$)
\begin{displaymath}
 \begin{array}{l}
 \ds |{Q}^t f \, - \, {Q}^{\epsilon,t}f|_H =\left|\int_{t-\varepsilon }^{t } (-A)^{1/2} e^{(t-s)A}f(s)ds\right|_H 
%\\  \ds   \leq \left(\int_{t-\varepsilon }^{t }\|(-A)^{1/2} e^{(t-s)A}\|^{q}%_{\cal L}ds
%\right)^{1/q}\left( \int_{t-\varepsilon }^{t}| f(s)|^{p}_H  ds
%\right)^{1/p} \\   
\\ 
 \ds \leq M \left(\int_{t-\varepsilon }^{t }(t-
s)^{ - q/2}ds
\right)^{1/q}\left( \int_{t-\varepsilon }^{t} | f(s)|^{p}_H ds
\right)^{1/p} 
{   \le } \, \,  M_q \varepsilon ^{-1/2 \,  +1/q  } |f|_{p}
\end{array}
\end{displaymath}
with $ -\frac{1}{2} +  \frac{1}{q}  >0 $.  Hence
$Q^{\varepsilon,t  } \rightarrow
{{Q}^t} $, as $\epsilon \to 0^+$, in the operator norm so that
${Q}^t$ is compact and
(i) follows. Let us consider (ii).
For $0\leq t \leq t+u \leq T$ and $|f|_{p}\leq
1$, we have
\begin{displaymath}
 \begin{array}{l}
\ds |{Q}f(t+u)-{Q}f(t)|_H     
 \leq \int_{0}^{t} \|(-A)^{1/2} e^{(t+u-s)A} - 
(-A)^{1/2} e^{(t-s)A}\|_{\cal L}\,   |f(s)|_H ds 
 \\
\ds+\int_{t}^{t+u}| (-A)^{1/2}   e^{(t+u-s)A} f(s)|_H ds 
 \\
 \ds
 \le {   M_p} \Big(   \int_{0}^{u} s^{-q /2}ds
\Big)^{1/q}
+ 
 \Big( \int_{0}^{T} \|(-A)^{1/2} e^{(u+s)A}-  (-A)^{1/2}  e^{sA} \|^{q}_{\cal L} ds
\Big)^{1/q }   = I_1 + I_2.   
\end{array}
\end{displaymath}
It is clear that $I_{1} = M_p' \,   u^{1/2 -
1/p}      \rightarrow 0$ as
$u\rightarrow 0$.

     Moreover, for $s>0$, $(-A)^{1/2} e^{sA}$ is compact (indeed $(-A)^{1/2} e^{sA} e_k$ $= (\lambda_k)^{1/2} e^{-s \lambda_k} e_k$ and $(\lambda_k)^{1/2} e^{-s \lambda_k} \to 0$ as $k \to \infty$). It follows that  
  $\|e^{uA}(-A)^{1/2}e^{sA}-
(-A)^{1/2}e^{sA}\|_{\cal L}$ $ \rightarrow 0$ as $u\rightarrow 0$  for
arbitrary $s>0$.

Since
 $
\ds \|(-A)^{1/2} e^{(u+s)A}- (-A)^{1/2} e^{sA}
\|^{q}\leq \frac{ (2M)^q }{s^{q/2}} , \;   \,       s >0,
 $ $ u\geq 0, 
$
 and $q<2$,
 by the Lebesgue's dominated 
convergence theorem $I_{2}\rightarrow 0$ as   
$u\rightarrow 0$. Thus the proof of
(ii) is complete.
 \end{proof}

\noindent  {\bf Proof of the existence part of Theorem \ref{base}}. Let $x \in H$. We proceed {   in}  two steps. 
\\
{\it I Step.} Let $( X_{m})$ be solutions of \eqref{2ww}.  
 We prove that  their laws $\{ {\cal
L}(X_{m})\}$ form a tight family of 
probability measures on ${\cal B}(C([0,\infty);H))$.

To this purpose it is enough to show that for each $T>0$ the laws $\{ {\cal
L}(X_{m})\}$ form a tight family of 
probability measures on ${\cal B}(C([0,T];H))$.
%(recall that $C([0,\infty);H)$ is a Polish space endowed with the metric %of the uniform convergence on bounded intervals of $[0, \infty)$). 

 Let us fix  $p>2$, $T>0$ and choose $\alpha$ such that  $1/p <\alpha < 1/2 $. We know by \eqref{sd1}
 that there exists a constant $c_{p}>0$ such that $\E|X_{m}(t)|^{p}_H  $ $\leq c_{p},$ $  m\ge 1,$ $t \in [0,T].$ It follows that
\begin{equation}
\sup_{m \ge 1} \E\int_0^T |F_m(X_{m}(t))|^{p}_H < \infty.
\label{e8.18} 
\end{equation} 
with $\pi_m \circ F = F_m$, since   $|F_m(x)|_H \le C_F (1 + |x|_H)$, $m \ge 1$. 
 By the stochastic Fubini theorem we have the following
factorization formula (cf. Theorem 5.10 in \cite{DZ})
\begin{displaymath}
\begin{array}{l}
W_A(t)=\int_{0}^{t}e^{(t-s)A}dW_s=\; {G}_{\alpha} Y_t= G_{\alpha}(Y)(t), \quad t \in [0,T],
\end{array}
 \end{displaymath} 
where $
  Y_t=\int_{0}^{t} (t-r)^{-\alpha }e^{(t-r)A}   dW_r, $ $ t \in [0,T].
$ 
Therefore  
\begin{equation} \label{224c}
 \begin{array}{lll}
 X_{m}(t) 
&=& e^{tA} x+ Q(F_{m}(X_{m}))(t)+
   G_{\alpha
}(Y)(t),\;\; t \in [0,T].
\end{array}
\end{equation} 
Note that $Y_t$ is a Gaussian random variable with values in $H$, having mean 0 and  covariance operator $R_t = \int_0^t s^{-2 \alpha} e^{2sA} ds$. Therefore it is easy to prove that
\begin{equation} \label{s77} 
\E\int_{0}^{T}|Y_s|^{p}_H ds \le T \sup_{t \in [0,T]}  \E [|Y_t|^{p}_H] < \infty. 
\end{equation} 
 Now we  show tightness of $\mathcal L(X_{m})$ on  ${\cal B}(C([0,T];H))$.  
It follows from \eqref{e8.18},  
\eqref{s77} and  Chebishev's
inequality that for $\varepsilon >0$ one can   
find $r>0$ such that for all $m \ge 1$
\begin{equation}
\P\Big( \big(\int_{0}^{T}|Y_s|^{p}_H ds\big)^{1/p}   
\leq r \; \mbox{\rm and
} \big(\int_{0}^{T}|F_{m}( X_{m}(s) ) |^{p}_H ds \big)^  
{1/p}  \leq r \Big)> 1-\varepsilon.
\label{e8.19}   
\end{equation}
By Propositions \ref{p8.40} and   \ref{p8.41} (recall that $|\cdot|_p $ denotes the norm in
$L^{p}(0,T;H)$) the set
\begin{displaymath}
K=\{ e^{(\cdot )}x+ G_{\alpha}f(\cdot
)+ Qg(\cdot ): \; |f|_{p}\leq
r,|g|_{p}\leq r\} \subset C([0,T];H)  
\end{displaymath}
is  relatively  compact. It follows from \eqref{224c}
that 
%\begin{displaymath}
 $\P (X_m \in K) = \mathcal  L(X_{m})(K)> 1-\varepsilon
,$ $ m \ge 1.$
%\end{displaymath}
and the tightness follows by the Prokhorov theorem.

\vskip 1mm       
\noindent 
  {\it II Step. } By the Skorokhod representation theorem, possibly passing to a subsequence of $(X_m)$ still denoted by $(X_m)$,  
 there exists a probability space $(\hat  \Omega, \hat  {\cal F}, \hat  \P )$ and  random variables $\hat X $ and  $\hat X_m$, $m \ge 1$, defined on $\hat \Omega $ with values in $C(0,\infty; H)$  such that the law of $X_m$ coincide with the law of $\hat X_m$, $m \ge 1,$ and moreover 
\begin{gather*}
 \hat X_m \to \hat X,\;\;\; \hat \P-a.s. 
\end{gather*}
Let us fix ${k_0} \ge 1$. Let $\hat X^{(k_0)}_m = \langle \hat X^{}_m,  e_{k_0} \rangle
$.  Recall that $\pi_m \circ F = F_m$.
 It is not difficult to prove that     the processes $(M_{m}^{(k_0)})_{m \ge 1}$  
$$
M_{m}^{(k_0)}(t) = \begin{cases} \hat X^{(k_0)}_m(t) - x^{(k_0)} +   \lambda_{k_0} \int_0^t  \hat X^{(k_0)}_m(s) ds \, - \, \lambda_{k_0}^{1/2} \, \int_0^t F^{(k_0)}(\hat X_m (s))ds,\; \; {k_0} \le m 
\\     \hat X^{(k_0)}_m(t) - x^{(k_0)} +      \lambda_{k_0} \int_0^t \hat X_m^{(k_0)}(s) ds,    \, \, \; \;\; {k_0} > m,\;\; t \ge 0,   
\end{cases}
$$
are square-integrable continuous  ${\cal F}_t^{\hat X_m}$-martingales on $(\hat  \Omega, \hat  {\cal F}, \hat  \P )$ with $M_{m}^{(k_0)}(0) =0$.
%(cf. the proof in Section 8.4 of \cite{D2}). 
 Moreover the quadratic variation process  $\langle M_{m}^{(k_0)}\rangle_t \, = \, t$, $m \ge 1$ (cf. Section 8.4 in \cite{DZ}).  
 
 Passing to the limit as $m \to \infty$ we find that 
\begin{equation} \label{w11}
M_{}^{(k_0)}(t) =  \hat X^{(k_0)}(t) - x^{(k_0)} +    \lambda_{k_0} \int_0^t  \hat X^{(k_0)}(s) ds \, - \, \lambda_{k_0}^{1/2} \, \int_0^t F^{(k_0)}(\hat X (s))ds, \;  t \ge 0,   
   \end{equation}
 is a square-integrable continuous  ${\cal F}_t^{\hat X}$-martingale with $M_{}^{(k_0)}(0) =0$. To check the martingale property,    
 let us fix $0 < s < t $. We know that $\hat \E [M_{m}^{(k_0)}(t) - M_{m}^{(k_0)}(s) /\,  {\cal F}_s^{\hat X_m}]=0$, $m \ge 1$.   
  
  Consider $0 \le s_1 < \ldots < s_n \le  s$, $n \ge 1$. For  any   $h_j \in C_b (H)$, we have, for $m \ge k_0$,
\begin{gather}\label{w113}  
 \begin{array}{l}
 \hat \E  \Big[ \big ( \hat X^{(k_0)}_m (t) - \hat X^{(k_0)}_m(s) +    \lambda_{k_0} \int_s^t  \hat X_m^{(k_0)}(r) dr  -  \lambda_{k_0}^{1/2} \, \int_s^t F^{(k_0)}(\hat X_m (r))dr    \big )  \\
    \cdot \prod_{j=1}^n 
h_j(\hat X_m ({s_j}))\Big]=0.
\end{array}     
\end{gather}
 Using that $| F^{(k_0)} (x) | \le C_F   (1 + |x|_H)$ and that,  for any  $T>0$,
% \begin{equation*}
% \label{dd}
%\sup_{m \ge 1} \sup_{t \le T} \E [ |X_m(t)|^p_H ]  = 
 $\ds \sup_{m \ge 1} \sup_{t \le T} \hat \E [ |\hat X_m(t)|^p_H ]
 \le C < \infty$
%\end{equation*}
 (cf. \eqref{sd1}) 
by the Vitali   convergence theorem we get easily that \eqref{w113} holds when
 $\hat X_m$ is replaced by $\hat X$ {   (note that  this assertion could   be proved by using only the   dominated convergence theorem).} Then  we obtain that $M_{}^{(k_0)}$ 
 is   a square-integrable continuous  ${\cal F}_t^{\hat X}$-martingale.   
  \\ 
Moreover, by a limiting procedure, arguing as before, we find that $( (M_{}^{(k_0)}(t))^2 -t)$ is a martingale. It follows that $M_{}^{(k_0)}$ is {\it a real Wiener process} on $(\hat \Omega, \hat {\cal F}, \hat \P)$.

Hence, for any $k \ge 1,$ we find that there exists a real Wiener process $M^{(k)}$ such that
\begin{gather*}
\begin{array}{l}
  \hat X^{(k)}(t) =  x^{(k)} -    \lambda_{k} \int_0^t  \hat X^{(k)}(s) ds 
  \, + \, \lambda_{k}^{1/2} \, \int_0^t F^{(k)}(\hat X (s))ds + M_{}^{(k)}(t). 
  \end{array} 
\end{gather*}
We prove now that   $(M^{(k)})_{k \ge 1}$ are independent Wiener processes. 
\\
We fix $N \ge 2$ and
 introduce the processes $(S_{m}^N)_{m \ge 1}$, $S_m^{N}(t) =  
 \big ( M_{m}^{(1)}(t), \ldots,   \ M_{m}^{(N)}(t)\big)_{t \ge 0}$, with values in $\R^N$.     The components of $S_m^{N}$  are square-integrable continuous  ${\cal F}_t^{\hat X_m}$-martingales. Moreover  the    quadratic covariation $\langle M_{m}^{(i)},  M_{m}^{(j)} \rangle_t = \delta_{ij} t $.
 
 Passing to the limit as before we obtain that also the $\R^N$-valued process $(S^N(t))$, 
 $
 S^N (t) = \big ( M^{(1)}(t), \ldots,   \ M^{(N)}(t) \big), \;\; t \ge 0,  
  $
 has  components which are square-integrable continuous  ${\cal F}_t^{\hat X}$-martingales with quadratic covariation $\langle M^{(i)},  M^{(j)} \rangle_t = \delta_{ij} t $. Note that $S^N(0)=0$, $\hat \P$-a.s.
 
  By the L\'evy characterization of the Brownian motion (see Theorem 3.16 in \cite{KS}) we have that  $\big ( M^{(1)}(t), \ldots,   \ M^{(N)}(t) \big)$ is a standard Wiener process with values in $\R^N$. Since $N$ is arbitrary, 
  $(M^{(k)})_{k \ge 1}$ are independent real Wiener processes and  the proof is complete.

\begin {remark} \label{sd} {\em 
%The existence result for \eqref{sde} holds more generally if $F: H \to H$ is %continuous and we have uniform estimates for approximating solutions  like %%
%\eqref{e8.18}. A sufficient condition for this is that $F$ has at most a %linear growth.   
%\\
Following the previous method one can prove existence of weak mild solution even for 
\begin{equation*} 
dX_{t}=AX_{t}dt +  (-A)^{\gamma}F(X_{t})dt+ dW_{t},\qquad X_{0}=x\in H,
\end{equation*}
with $\gamma \in (0,1)$ and $F: H \to H$  continuous and having at most a linear growth.   
}    
\end {remark}

\section {Proof of weak uniqueness  when $F \in C_b(H,H)$}

To get the weak uniqueness of Theorem \ref{base} when 
 $F \in C_b(H,H)$
we  first show  the equivalence between martingale solutions and mild solutions. Indeed  for martingale problems  some useful uniqueness  results  are  available even in infinite dimensions (see, in particular, Theorems \ref{ria}, \ref{uni1} and \ref{key}).
%(see Theorem \ref{uni1}) are  available.  ??

\subsection{Mild solutions and  martingale problem   } 

 We  formulate the martingale problem of Stroock and Varadhan \cite{SV79} for the operator $\L$ given below in \eqref{ll} and associated to \eqref{sde}. We stress that  an infinite-dimensional generalization of the martingale problem is proposed in Chapter 4 of \cite{EK}.  Here we follow 
 % notations and results of
   Appendix of \cite{PrPot}.  In such  appendix  some extensions and modifications of theorems given in
Sections 4.5 and 4.6 of \cite{EK} are proved.

 \vskip 1mm       
   We use the space $C^2_{cil}(H)$
  of regular
  cylindrical functions (cf. \eqref{cil2}).
  We 
 deal with the following linear operator ${\L}: D(\L) \subset C_b(H) \to C_b(H)$,
with $D(\L) = C^2_{cil}(H)$  (recall that here   $F \in C_b(H,H)$): 
 \begin{eqnarray} \label{ll}
\L f (x) &=& \frac{1}{2} Tr(D^2 f(x)) + \langle x, ADf(x) \rangle +
\langle F(x),  (-A)^{1/2} Df(x) \rangle 
\\ \nonumber & = &
L f (x) +  
\langle F(x),  (-A)^{1/2} Df(x) \rangle,\;\;\; f \in D(\L),\; x \in H.
\end{eqnarray}

\begin{remark} {\em   
 We stress  that the linear operator $(\L, D(\L ))$  in \eqref{ll} is countably pointwise determined,
   i.e., it verifies Hypothesis 17 in \cite{PrPot}. Indeed, arguing  as in Remark 8 of \cite{PrPot}, one shows that there exists  a countable
set ${\mathcal H}_0 \subset D(\L)$ such
 that for any $f \in D(\L) $, there exists 
a sequence $(f_n) \subset {\mathcal H}_0$ satisfying
$$
 \lim_{n \to \infty} ( \| f - f_n \|_{0} +   \|  \L f_n -   \L f \|_{0}) =0. \qed
$$ 
}
\end{remark}

Let $x \in H$. An $H$-valued stochastic process $X = (X_t)= (X_t)_{  t \ge 0 }$
defined on some probability space $(\Omega, {\cal F}, \P)$
with continuous trajectories is a \textsl { solution of the martingale
problem for $(\L, \delta_x)$} if,
 for any $f \in D(\L)$,
\begin{equation}\label{mart}
 \begin{array}{l}
    M_t(f) = f(X_t) - \int_0^t \L f(X_s) ds, \;\; t \ge 0, \;\; \text{is a
    martingale}
\end{array} 
    \end{equation}
(with respect to the natural filtration $({\cal F}_t^X)$) 
% where ${\cal F}_t^X = \sigma(X_s \, : \, 0 \le s \le t)$ is the  
%$\sigma$-algebra generated by the random variables
% is the smallest $\sigma$-algebra that makes  %measurable all
%$X_s$, $0 \le s \le t$),
  and, moreover,     $X_0 =x, \P$-a.s.. 
  
  If we do not assume that $F$ is bounded then in general    
   $M_t(f)$ is only a local martingale because   in general  $\L f$ is not    a bounded function.

 \smallskip 
  We say that \textsl{ the martigale problem for ${ \L}$ is  well-posed} if, for any $x  \in H$,   there exists a martingale solution
 for $({ \L}, \delta_x)$ and, moreover,  uniqueness in law    holds  for  the martingale problem for $({ \L}, \delta_x)$.

 % \smallskip
  Equivalence between 
    mild solutions and martingale solutions  has been proved in a  general setting  in  \cite{kunze} even for SPDEs in Banach spaces.   
  We only give a sketch of the proof of  the next result for the sake of completeness
  (see also Chapter 8 in \cite{DZ}). 
  
  %Clearly, this  holds also if $F \in B_b(H,H)$ and, more %generally,  under % hypotheses in \cite{kunze}. 

\begin{proposition} \label{ser}  Let   $F \in C_b(H,H)$ and $x \in H.$   

(i)  If $X$ is a weak mild solution to \eqref{sde}    with $X_0 = x$, $\P$-a.s.,  then
 $X$ is also 
a  solution  of the martingale
problem for $(\L, \delta_x)$.

(ii) Viceversa, if $X= (X_t)$ is  a  solution of the martingale
problem for $(\L,  \delta_x)$ on some probability space $(\Omega, {\cal F}, \P)$ then there exists a cylindrical Wiener process on $(\Omega, {\cal F},({\cal F}_t^X), \P)$ such that 
$X$ is  a weak mild solution to \eqref{sde}  
  on $(\Omega, {\cal F},({\cal F}_t^X), \P)$
with initial
condition $x$. 
\end{proposition}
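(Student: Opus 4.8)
The plan is to exploit the coordinate decomposition \eqref{d33}, which already presents each $X^{(k)}$ as a genuine real-valued It\^o process, and to reduce both implications to finite-dimensional stochastic calculus, treating the two directions as an It\^o-formula computation read forwards and then backwards.

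For part (i), fix $f \in C^2_{cil}(H)$ of the form $f(x) = \tilde f(x^{(i_1)}, \ldots, x^{(i_n)})$ with $\tilde f$ of class $C^2$ and compact support. First I would recall that, for a weak mild solution, projecting \eqref{mqq} onto $e_k$ gives exactly the mild form of the one-dimensional linear equation \eqref{d33}, so that each coordinate obeys
$$dX^{(k)}_t = \big(-\lambda_k X^{(k)}_t + \lambda_k^{1/2} F^{(k)}(X_t)\big)\, dt + dW^{(k)}_t,$$
with $(W^{(k)})$ independent real Wiener processes; hence $(X^{(i_1)}, \ldots, X^{(i_n)})$ is an $\R^n$-valued semimartingale. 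Applying the classical It\^o formula to $\tilde f$ and using $d\langle W^{(i_j)}, W^{(i_l)}\rangle_t = \delta_{jl}\, dt$ gives
$$df(X_t) = \Big[\tfrac{1}{2}\textstyle\sum_j \partial_{jj}\tilde f + \sum_j \partial_j \tilde f\,\big(-\lambda_{i_j} X^{(i_j)}_t + \lambda_{i_j}^{1/2} F^{(i_j)}(X_t)\big)\Big] dt + \sum_j \partial_j\tilde f\, dW^{(i_j)}_t.$$
I then identify the bracketed drift with $\L f(X_t)$: for cylindrical $f$ one has $Df(x) = \sum_j \partial_j \tilde f\, e_{i_j}$ (which lies in $D(A)$) and $\mathrm{Tr}(D^2 f(x)) = \sum_j \partial_{jj}\tilde f$, while, using self-adjointness and $Ae_{i_j} = -\lambda_{i_j} e_{i_j}$, $\langle x, A Df(x)\rangle = -\sum_j \lambda_{i_j} x^{(i_j)}\partial_j \tilde f$ and $\langle F(x), (-A)^{1/2} Df(x)\rangle = \sum_j \lambda_{i_j}^{1/2} F^{(i_j)}(x)\,\partial_j \tilde f$. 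Thus $M_t(f) = \sum_j \int_0^t \partial_j\tilde f\, dW^{(i_j)}_s$ is a true martingale, since $\partial_j \tilde f$ is bounded; this gives (i).

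For part (ii) the plan is to run this computation in reverse, and here lies the main obstacle: the coordinate maps $x \mapsto x^{(k)}$ and the products $x \mapsto x^{(k)}x^{(j)}$ are not themselves in $D(\L) = C^2_{cil}(H)$, because by \eqref{cil2} the test functions are required to have compact support. I would therefore localize: choose $\phi_R \in C^2$ with compact support and $\phi_R(r) = r$ on $[-R,R]$, apply the martingale property of \eqref{mart} to the admissible functions $x \mapsto \phi_R(x^{(k)})$, and set $\tau_R = \inf\{t : |X^{(k)}_t| \ge R\}$. On $[0,\tau_R]$ this shows that
$$N^{(k)}_t = X^{(k)}_t - x^{(k)} + \lambda_k \int_0^t X^{(k)}_s\, ds - \lambda_k^{1/2}\int_0^t F^{(k)}(X_s)\, ds$$
is a stopped local martingale; letting $R \to \infty$ yields a continuous local martingale with $N^{(k)}_0 = 0$. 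Applying the martingale problem to localized versions of $x \mapsto x^{(k)}x^{(j)}$ and subtracting the corresponding It\^o correction (the $\tfrac12\mathrm{Tr}(D^2\cdot)$ term contributing $\delta_{kj}$) then gives $\langle N^{(k)}, N^{(j)}\rangle_t = \delta_{kj}\, t$.

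To conclude, I would invoke the L\'evy characterization (Theorem 3.16 in \cite{KS}) applied to the finite vectors $(N^{(1)}, \ldots, N^{(N)})$ for each $N$: the $N^{(k)}$ are independent standard real Wiener processes on $(\Omega, {\cal F}, ({\cal F}_t^X), \P)$. Setting $W^{(k)} = N^{(k)}$ and $W_t = \sum_k W^{(k)}_t e_k$ defines a cylindrical Wiener process, and by construction every coordinate of $X$ satisfies \eqref{d33}; the equivalence between \eqref{d33} and the mild formulation \eqref{mqq} recorded in Section 1.1 shows that $X$ is a weak mild solution with initial datum $x$. The only genuinely technical points are thus the localization needed to circumvent the compact-support constraint on test functions and the bookkeeping for the cross-variations, both routine once the coordinate system \eqref{d33} is in hand.
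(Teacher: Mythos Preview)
Your proposal is correct and follows essentially the same route as the paper's proof: It\^o's formula on cylindrical test functions for (i), and for (ii) a localization of the unbounded coordinate functions $x\mapsto x^{(k)}$ and $x\mapsto x^{(k)}x^{(j)}$ to fit them into $C^2_{cil}(H)$, followed by the L\'evy characterization applied to finite blocks $(N^{(1)},\ldots,N^{(N)})$. The paper localizes via cutoffs $l_k(x)\eta(\langle x,e_k\rangle/n)$ rather than your $\phi_R$-plus-stopping-time device, and it derives $\langle M^{(k)}\rangle_t=t$ by explicitly comparing the martingale-problem identity for $f(x)=(x^{(k)})^2$ with It\^o's formula applied to $(X^{(k)}_t)^2$ starting from the semimartingale decomposition of $X^{(k)}$; but these are minor packaging differences, not different ideas. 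One small point worth making explicit in (i): the It\^o integral is first a martingale for the larger filtration $({\cal F}_t)$ carried by the weak mild solution, and one then passes to $({\cal F}_t^X)$ by inclusion.
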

\begin{proof}(i) Let   $X$ be  a weak mild solution to \eqref{sde} with $X_0 = x$, $\P$-a.s.
 defined on  a filtered probability space $(
\Omega,$ $ {\mathcal F},
 ({\mathcal F}_{t}), \P) $. Let $f \in D(\L)$. Since  $f$ depends only on a finite number of variables  by the It\^o formula   
  we obtain that
$
%\begin{array}{l}
f(X_t) $ $ - \int_0^t \L f(X_s) ds
%\end{array} 
$
 is an ${\cal F}_t$-martingale, for any $f \in D(\L)$. 
 We get easily  the assertion since ${\cal F}_t^X \subset 
{\cal F}_t$, $t \ge 0$.

\vskip 1mm \noindent  (ii)  Let $X$ be a solution to the martingale problem
 for $(\L, \delta_x)$ defined on $(\Omega, {\cal F}, \P) $.
% The proof is divided into three steps.

\vskip 0.5 mm       
\noindent  \textit{I Step.}  {\it Let $X^{(k)}_t = \langle X_t, e_k \rangle $ and $F(x) = \sum_{k \ge 1} F^{(k)}(x) e_k$.
We show that, for any $k \ge 1$,
$$
 \begin{array}{l}
X^{(k)}_t - x^{(k)} + \lambda_k\int_0^t X^{(k)}_s ds - \int_0^t (\lambda_k)^{1/2}F^{(k)}(X_s) ds
\end{array} 
$$
is a one-dimensional Wiener process $W^{(k)}= (W^{(k)}_t)$.}
 
Let $k \ge 1$. We will  modify   a well known argument
 (see, for instance, the proof of Proposition 5.3.1 in \cite{EK}).
 By the definition of martingale solution,
it follows easily that if
$f(x)= x^{(k)} = \langle x, e_k\rangle $, $x \in H$, the process
 \begin{equation} \label{f8}
\begin{array}{l}
M_t^{(k)} = 
 X^{(k)}_t  -  x^{(k)} + \lambda_k\int_0^t X^{(k)}_s ds - \int_0^t b_k(s) ds \,\; \text{is a continuous  local martingale},
 \end{array} 
 \end{equation}
   which is ${\cal F}_t^X$-adapted,  with    
 $b_k(s) = (\lambda_k)^{1/2}F^{(k)}(X_s)$ {  (to this purpose one has  to approximate the unbounded function $l_k(x)=\langle x, e_k\rangle$ by functions 
  $l_k (x) \eta (\frac{\langle x, e_k\rangle}{n})$, $n \ge 1$, where  
  $\eta \in C_0^{\infty}(\R)$ is such that  $\eta(s)=1$ for $|s| \le 1$).
 }     Then
    using $f(x) = (\langle x, e_k\rangle)^2 $, $x \in H$, we find that
\begin{equation} \label{g5}
\begin{array}{l}     
N_t^k = (X^{(k)}_t)^2 - (x^{(k)})^2 + 2\lambda_k\int_0^t (X^{(k)}_s)^2 ds -
2\int_0^t b_k(s)\, X^{(k)}_s  ds - t,
\end{array} 
\end{equation}
is also a continuous local martingale.  On the other hand, starting from \eqref{f8}   and
applying the It\^o formula (cf. Theorem 5.2.9 in \cite{EK}), we get
$$
\begin{array}{l}
(X^{(k)}_t)^2 = (x^{(k)})^2 - 2\lambda_k\int_0^t (X^{(k)}_s)^2 ds +
2\int_0^t b_k(s) \,X^{(k)}_s  ds + 2 \int_0^t b_k(s)   dM_s^{(k)} +  \langle M^{(k)} \rangle_t,
\end{array} 
$$
where $(\langle M^{(k)} \rangle_t)$      is the variation process of $M^{(k)}$.
Comparing this identity with \eqref{g5} we deduce: $N_t^k - 2 \int_0^t b_k(s)   dM_s^{(k)}= \langle M^{(k)}
\rangle_t -t $ and so 
 $\langle M^{(k)}
\rangle_t =t$  
 (a continuous local martingale of bounded variation is constant).
 By the  L\'evy martingale characterization of the Wiener process (see Theorem 5.2.12 in \cite{EK}) we get that $M^{(k)}$ is a real Wiener process.

\vskip 0.5 mm       
 \noindent \textit{II Step.} \textsl{   We prove that the previous Wiener processes $W^{(k)} = M^{(k)} $   are independent.}

\vskip 0.5 mm  We fix any   $N \ge 2$ and prove that $W^{(k)}$, $k=1, \ldots, N$ are independent. 
  We will argue similarly  to the first step.
By using  functions $f(x)= x^j x^k$, $j,k \in \{ 1, \ldots N\}$, we get that
$
\langle W^{(j)}, W^{(k)} \rangle_t
$ $ = \delta_{jk} t.
$
 Again by the  L\'evy martingale characterization of the Wiener process (cf. Theorem 3.16 in \cite{KS})
we get that $(W^{(1)}, \ldots, W^{(N)})$ is an $N$-dimensional standard Wiener process.
It follows that $\{W^{(k)} \}_{k =1, \ldots, N}$ are independent real  Wiener processes.
\end{proof}
For the martingale problem for $\L$ in \eqref{ll}  we have  the following  uniqueness  result (we refer to   Corollary 21 in \cite{PrPot}; see also Theorem 4.4.6 in \cite{EK} and Theorem 2.2 in \cite{kunze}).

\begin{theorem} \label{ria} 
  Suppose the following two conditions:

 (i)  for any $x \in H$, there exists a  martingale
solution
  for $(\L, \delta_x)$;

 (ii)  for any $x \in H$, any two  martingale solutions $X$ and $Y$ for $(\L, \delta_x)$ have the same one dimensional marginal laws (i.e., for    $t \ge 0$, the law of $X_t$ is the same as $Y_t$ on ${\cal B}(H)$).   

 Then the
   martingale
   problem for  $\L$ is well-posed. 
\end{theorem}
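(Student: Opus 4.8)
The plan is to deduce full well-posedness from the two hypotheses by the classical Stroock--Varadhan reduction, in the infinite-dimensional form developed in \cite{PrPot} and \cite{EK}. Existence is exactly hypothesis (i), so the entire content is \emph{uniqueness in law}. The strategy is to show that uniqueness of one-dimensional marginals (ii) forces every martingale solution to be Markov with a common, uniquely determined transition kernel, and then to propagate this uniqueness first to all finite-dimensional distributions and finally to the law on $C([0,\infty);H)$.

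First I would fix $x \in H$ and a solution $X$ of the martingale problem for $(\L,\delta_x)$ on $(\Omega,\mathcal F,\P)$. The central step is to establish the Markov property via regular conditional probabilities. Fixing $s \ge 0$, let $\{\P_\omega\}$ be a regular conditional distribution of $\P$ given $\mathcal F_s^X$. By a shift argument (of the type in Sections 4.5--4.6 of \cite{EK}), one checks that for $\P$-a.e.\ $\omega$ the shifted process $t \mapsto X_{s+t}$ under $\P_\omega$ is again a solution of the martingale problem for $(\L,\delta_{X_s(\omega)})$. Here the countable pointwise determinacy of $(\L, D(\L))$ (Hypothesis 17 in \cite{PrPot}, verified in the Remark above) is essential: it allows one to test the martingale identity in \eqref{mart} against only the countable family ${\mathcal H}_0 \subset D(\L)$, so that a single $\P$-null set serves all test functions simultaneously.

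Once the shifted process is known to solve the martingale problem from $X_s(\omega)$, hypothesis (ii) applies: under $\P_\omega$ the one-dimensional law of $X_{s+t}$ depends only on $X_s(\omega)$, not on the particular solution. Hence $\P(X_{s+t}\in \Gamma \mid \mathcal F_s^X)$ is a function $p(t,X_s,\Gamma)$ of $X_s$ alone, so $X$ is Markov and its transition kernel $p(t,\cdot,\cdot)$ is uniquely determined by (ii). An induction on the number of time points, using the Markov property and Chapman--Kolmogorov, then shows that every finite-dimensional distribution $\P(X_{t_1}\in\Gamma_1,\dots,X_{t_n}\in\Gamma_n)$ is computed from the common kernel and the initial point $\delta_x$, hence coincides for any two solutions. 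Since $C([0,\infty);H)$ is Polish and its Borel $\sigma$-algebra is generated by the coordinate maps (trajectories being continuous by definition of a solution), equality of all finite-dimensional distributions yields equality of the laws on $\mathcal B(C([0,\infty);H))$, which is exactly uniqueness in law.

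I expect the regular-conditional-probability step to be the main obstacle: one must verify measurability of $\omega \mapsto \P_\omega$, check that the conditioned shifted process genuinely satisfies the martingale identity for $\P$-a.e.\ $\omega$ (not merely in expectation), and control the exceptional null sets uniformly in the test function. This is precisely where the countable determinacy of $\L$ and the continuity of trajectories intervene, and it is carried out in the cited extensions of \cite{EK} in \cite{PrPot} (Corollary 21), with the analogous mild-solution statement in \cite{kunze}.
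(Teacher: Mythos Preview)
Your proposal is correct and aligns with the paper's treatment: the paper does not give its own proof of this theorem but simply cites Corollary 21 in \cite{PrPot} (with Theorem 4.4.6 in \cite{EK} and Theorem 2.2 in \cite{kunze} as parallel references), and the argument you sketch is precisely the Stroock--Varadhan mechanism underlying that corollary, including the essential use of the countable pointwise determinacy of $(\L, D(\L))$ noted in the Remark preceding the theorem.
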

Throughout  Section 5 we will apply the previous result and also the next localization principle for $\L$ (cf.  Theorem 26 in \cite{PrPot}). 
\begin{theorem} \label{uni1}  Suppose   
    that for any $x \in
H$ there exists a martingale solution  for  $(\L, \delta_x)$.
   Suppose that
 there exists
a family $\{ U_j\}_{j \in J}$ of open sets $U_j \subset H$  with $\cup_{j \in J} U_j = H$  and  linear operators $\L_j$ with the same domain of $\L$, i.e., 
 $ \L_j: D(\L) \subset C_b (H) \to C_b(H)$, $j \in J $ such that

i)  for any $j \in J$, the martingale problem for $\L_j$ is well-posed.

ii) for any $j \in J$, $f \in D(\L)$, we have
$
 \L_j  f(x) = \L  f(x),\;\; x \in U_j.
$
\\ 
Then the martingale problem for $\L$ is well-posed.  
  \end{theorem}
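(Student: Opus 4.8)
The plan is to prove well-posedness of the martingale problem for $\L$. Since existence of a martingale solution for $(\L,\delta_x)$ is assumed for every $x$, only uniqueness in law has to be established (recall that, by Theorem \ref{ria}, uniqueness of the one-dimensional marginals would already suffice); I would in fact prove the stronger statement that any two solutions share the full law on $C([0,\infty);H)$, by localizing through exit times.

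First I would isolate the two structural properties of martingale problems on the Polish space $C([0,\infty);H)$ that drive the localization, both standard in the Ethier--Kurtz framework used in \cite{PrPot}. (a) A \emph{restart property}: if $X$ solves the martingale problem for $(\L,\delta_x)$ and $\tau$ is an a.s.\ finite stopping time, then, via a regular conditional probability distribution of the shifted path $(X_{\tau+\cdot})$ given $\mathcal F^X_\tau$, for $\P$-a.e.\ $\omega$ this conditional law solves the martingale problem for $(\L,\delta_{X_\tau(\omega)})$; conversely such families of conditional solutions concatenate back into a solution. (b) A \emph{stopping lemma}: well-posedness of the problem for $\L_j$ forces uniqueness of the law of a solution \emph{stopped at the first exit time from} $U_j$. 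Granting these, hypothesis (ii) gives that a solution $X$ of the problem for $\L$, stopped at its exit time $\tau$ from $U_j$, solves the stopped problem for $\L_j$, since $\L f(X_s)=\L_j f(X_s)$ for $s<\tau$; hence by (i) the joint law of $(X_{\cdot\wedge\tau},\tau)$, and in particular of the exit position $X_\tau$, is uniquely determined by $j$ and the starting point.

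Next I would run the Stroock--Varadhan iteration. Given $x\in H$, choose $j_0$ with $x\in U_{j_0}$, let $\tau_1$ be the exit time from $U_{j_0}$, and fix the law of $(X_{\cdot\wedge\tau_1},X_{\tau_1})$ by the previous step. Conditioning on $\mathcal F^X_{\tau_1}$ and invoking the restart property, the post-$\tau_1$ evolution again solves the problem for $\L$ started from $X_{\tau_1}$; iterating produces $0=\tau_0\le\tau_1\le\tau_2\le\cdots$ together with stopped path segments whose joint law is inductively determined. Using separability of $H$ I would first pass to a countable subfamily $\{U_j\}$, so that the choice of localizing index at each step can be made by a fixed Borel rule, keeping the whole construction measurable for the conditional-distribution arguments. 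The law of $X$ on $C([0,\infty);H)$ is then pinned down \emph{provided} $\tau_n\uparrow\infty$ $\P$-a.s.

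Establishing non-accumulation of the exit times is the step I expect to be the main obstacle, and the one place where infinite dimensionality must be handled with care. I would argue pathwise: for a.e.\ $\omega$ and each fixed $T$ the image $K=\{X_t(\omega):t\in[0,T]\}$ is a \emph{compact} subset of $H$, being the continuous image of $[0,T]$, so the open cover $\{U_j\}$ of $K$ admits a Lebesgue number $\delta>0$ with $\{z\in K:|z-y|_H<\delta\}\subset U_{j(y)}$ for a suitable index $j(y)$, each $y\in K$. Since the trajectory is uniformly continuous on $[0,T]$ there is $\eta(\omega)>0$ with $|X_t-X_s|_H<\delta$ whenever $|t-s|<\eta$; consequently, starting from $X_{\tau_n}\in K$ the path cannot leave the chosen $U_{j(X_{\tau_n})}$ before time $\tau_n+\eta$, so $\tau_{n+1}-\tau_n\ge\eta$ as long as $\tau_n\le T$. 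Hence at most finitely many exits occur in $[0,T]$, giving $\tau_n\uparrow\infty$. This determines the law of $X$ on $C([0,\infty);H)$ uniquely; as a martingale solution exists for every starting point, the martingale problem for $\L$ is well-posed.
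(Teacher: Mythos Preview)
The paper does not prove Theorem~\ref{uni1} at all: it merely cites Theorem~26 in \cite{PrPot} (which in turn adapts the Stroock--Varadhan/Ethier--Kurtz localization argument to the present setting). Your proposal reconstructs exactly this classical argument, so in spirit you are aligned with what underlies the paper's citation.

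There is, however, a genuine inconsistency in your non-accumulation step. For the inductive law-determination you fix \emph{once and for all} a Borel selection rule $y\mapsto j(y)$ on $H$ (necessarily independent of $\omega$) that picks some $U_{j(y)}\ni y$, and $\tau_{n+1}$ is the exit time from $U_{j(X_{\tau_n})}$ for \emph{that} rule. In the non-accumulation paragraph you silently switch to a different, $\omega$-dependent choice: the index $j(y)$ there is the one guaranteed by the Lebesgue number of the cover on the compact trajectory $K=X([0,T],\omega)$, i.e.\ an index with $B(y,\delta(\omega))\cap K\subset U_{j(y)}$. These two selections need not coincide, and with an arbitrary Borel rule nothing prevents $d(X_{\tau_n},\partial U_{j(X_{\tau_n})})\to 0$, so the lower bound $\tau_{n+1}-\tau_n\ge\eta$ does not follow. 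The standard fix is to build the Lebesgue-number idea into the rule itself: set $\rho(y)=\sup_j d(y,U_j^c)$ (lower semicontinuous, strictly positive since $\{U_j\}$ covers $H$) and take $j(y)=\min\{j:\,d(y,U_j^c)>\tfrac12\rho(y)\}$, which is Borel. Then on $K$ one has $\rho\ge\delta>0$, hence $d(X_{\tau_n},\partial U_{j(X_{\tau_n})})>\delta/2$, and uniform continuity of the path gives the uniform lower bound on $\tau_{n+1}-\tau_n$ you claimed. With this adjustment your outline is correct and matches the method behind the cited reference.
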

  
 In {  Sections 6 and 7}  we   treat {\sl possibly unbounded} $F$; we will prove uniqueness 
by truncating $F$ 
% (i.e., we will multiply $F$ by some cut-off function)  and 
 and using uniqueness  for the martingale problem up to a stopping time.   According to  Section 4.6 of \cite{EK} this leads to the  concept of {\sl stopped martigale problem} for  $\cal L$ which we define now.  
  
\smallskip   
Let us fix an open set $U \subset H$ and consider the Kolmogorov operator $\L$ in \eqref{ll} with $F \in C_b(H,H)$. 
%We define the stopped martingale problem for $\L$ on $U$.

 Let $x \in H$.  A  stochastic process $Y = (Y_t)_{t \ge 0}$ with values in $H$
defined on some probability space  $ (\Omega, {\cal F}, \P)$
with continuous paths 
is a  solution of the stopped martingale problem for $(\L, \delta_x, U)$ if  $Y_0 =x$, $\P$-a.s.  and the
 following conditions hold:

 (i) $Y_t = Y_{t \wedge \tau}$, $t \ge 0,$  $\P$-a.s, where
\begin{equation} \label{ta1}
 \begin{array}{l}
\tau = \tau^Y_U= \inf \{ t \ge 0 \; : \; Y_t \not \in U  \}
 \end{array}
 \end{equation}   
($\tau = + \infty$ if the set is empty;  this exit time $\tau$ is an
${\cal F}_t^Y$-stopping time);

 (ii) for any $f \in D(\L) =  C^2_{cil}(H)$, 
%\begin{equation}\label{mart1}
%\begin{array}{l}
  $  \ds  f(Y_t) - \int_0^{t \wedge \tau}
    \L f(Y_s) ds, \;\;\; t \ge 0, \;\; \text{is a ${\cal F}_t^Y$-martingale.}$
%    \end{array}
%\end{equation}
 
 A key result
 says, roughly speaking,  
 % the converse, i.e.
 that if the (global) martingale problem for an operator  is well-posed then also the
 stopped martingale problem for such operator is well-posed for any choice
  of the open set $U$ and for any initial condition $x$ (we refer to  Theorem 22 in \cite{PrPot}; see   also the beginning of Section A.3 for a comparison  between this result and  Theorem 4.6.1 in \cite{EK}).   
   We state this  result for the operator $\L$ in \eqref{ll}.
 %  takes the following form. 
   \begin{theorem} \label{key} Suppose  that
 the  martingale problem for $\L$ is well-posed.

  Then also the
 stopped martingale problem for $(\L, \delta_x, U)$ is well-posed for any  $x \in H$ and for any open set $U$ of $H$. In particular uniqueness in law holds for the
 stopped martingale problem  for $(\L, \delta_x, U)$, for any $x \in H$ and   $U$ open set in $H$.  
 \end{theorem}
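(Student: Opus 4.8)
The plan is to transfer well-posedness from the global martingale problem for $\L$ to the stopped one, in the two standard directions, following the piecing-out scheme of \cite{SV79} and its infinite-dimensional refinement in \cite{PrPot} (cf. Theorem 4.6.1 in \cite{EK}). For \textbf{existence}, I would use that global well-posedness provides, for the fixed $x\in H$, a solution $X$ of the martingale problem for $(\L,\delta_x)$; since $X$ has continuous paths and $U$ is open, $\tau=\tau^X_U$ is an $({\cal F}^X_t)$-stopping time (cf. \eqref{ta1}). Setting $Y_t=X_{t\wedge\tau}$, the freezing condition $Y_t=Y_{t\wedge\tau}$ holds by construction, and for every $f\in D(\L)$, using $s\wedge\tau=s$ on $\{s\le\tau\}$,
$$ f(Y_t)-\int_0^{t\wedge\tau}\L f(Y_s)\,ds = f(X_{t\wedge\tau})-\int_0^{t\wedge\tau}\L f(X_s)\,ds = M_{t\wedge\tau}(f), $$
with $M_t(f)$ the martingale of \eqref{mart}. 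Optional stopping shows $M_{\cdot\wedge\tau}(f)$ is an $({\cal F}^Y_t)$-martingale, so $Y$ solves the stopped problem for $(\L,\delta_x,U)$.

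The \textbf{uniqueness} part is the heart of the matter. Let $Y$ be any solution of the stopped problem for $(\L,\delta_x,U)$ with exit time $\tau=\tau^Y_U$; I would show its law is forced by completing $Y$ into a genuine global solution via a restart at $\tau$. Using the existence half of global well-posedness together with a regular conditional distribution (equivalently, a measurable selection $y\mapsto\P_y$ of global solution laws), which is legitimate because $\L$ is countably pointwise determined, i.e.\ satisfies Hypothesis~17 of \cite{PrPot} as noted in the remark following \eqref{ll}, I would build on an enlarged space a process $X$ with $X_t=Y_t$ for $t\le\tau$ and such that, conditionally on ${\cal F}^Y_\tau$, the shifted process $(X_{\tau+s})_{s\ge0}$ is a global solution started at $Y_\tau$. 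One then checks that $X$ solves the global martingale problem for $(\L,\delta_x)$: splitting $f(X_t)-\int_0^t\L f(X_s)\,ds$ at $t\wedge\tau$, the part on $[0,\tau]$ is a martingale because $Y$ solves the stopped problem, the part on $[\tau,\infty)$ is a martingale by the restart property, and the two glue together by the tower property and optional sampling at $\tau$. This concatenation is exactly the strong-Markov lemma underlying Theorem~22 of \cite{PrPot}.

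To \textbf{conclude}, the concatenated $X$ is a global solution for $(\L,\delta_x)$ whose exit time from $U$ equals $\tau$: it coincides with $Y$ on $[0,\tau]$, so $X_t\in U$ for $t<\tau$, while $Y_\tau\in\partial U$ (hence $Y_\tau\notin U$, as $U$ is open). Therefore $Y_t=X_{t\wedge\tau^X_U}$. By well-posedness of the global problem the law of $X$ on $C([0,\infty);H)$ depends only on $x$, and since $Y$ is a measurable image of $X$ (stopping at $\tau^X_U$), the law of $Y$ depends only on $x$ and $U$. This gives uniqueness in law for the stopped problem, and combined with the existence part, its well-posedness.

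The step I expect to be the main obstacle is the restart/gluing construction in infinite dimensions: establishing the strong Markov property of global solutions and the measurable dependence $y\mapsto\P_y$ of their laws, which must be set up so that the glued martingale property survives at the random time $\tau$. In the present framework this rests on $\L$ being countably pointwise determined and on $C([0,\infty);H)$ being Polish; once these are secured the argument is the abstract one of \cite{PrPot}, so in practice the only thing left to verify directly is that $\L$ in \eqref{ll} meets the hypotheses of Theorem~22 there, which the remark after \eqref{ll} already provides.
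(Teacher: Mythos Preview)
The paper does not give its own proof of this theorem: it is stated as a black box with the sentence ``we refer to Theorem 22 in \cite{PrPot}; see also the beginning of Section A.3 for a comparison between this result and Theorem 4.6.1 in \cite{EK}.'' Your proposal is a correct outline of the standard Stroock--Varadhan piecing-out argument that underlies precisely that cited result, so there is no divergence in approach to compare; you have simply unpacked what the paper leaves as a citation, and correctly identified that the only nontrivial point is the measurable selection/gluing step, which is handled by the countable-pointwise-determined property noted after \eqref{ll}.
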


In order to apply Theorems \ref{ria} and \ref{uni1} we need 
 existence  of regular solutions for   related Kolmogorov equations
 and  some convergence results. This will be done in the next section.

\subsection{On the Kolmogorov
%nonlinear
equation for $\L$ when $\| F- z \|_0 < 1/4 $   }

Here we study  the Kolmogorov equation   
\begin{equation}
\label{e12} \lambda u-L u-\langle (-A)^{1/2}Du, F   \rangle=f,
\end{equation}
where $\lambda>0$, $f\in C_b^2 (H)$ and $F\in C_b(H,H)$ (cf. \eqref{ll};  $L$ is  the Ornstein-Uhlenbeck operator).  
 We assume that  there exists $z \in H$ such that 
 \begin{equation}\label{111}
 %\begin{array}{l}
 \sup_{x \in H}|F(x) - z|_H < 1/4. 
 % \end{array}    
 \end{equation}
We will prove  regularity and convergence   results  for   solutions.
 Note that to study \eqref{e12} we cannot proceed as in Proposition 5 of \cite{DFPR}   because in general $\| (-A)^{1/2} Du \|_0 \not \to 0$ 
 %does not tend to $0$ 
 as $\lambda \to \infty$.  
 
 We will rewrite the equation as 
\begin{equation}
\label{eee} \lambda u(x)-L u(x)-\langle (-A)^{1/2}Du(x), z   \rangle=f(x) + \langle (-A)^{1/2}Du(x),   F(x)  -z \rangle.
\end{equation}
 Let us introduce the Banach space $E = \{ v \in C^1_b(H), \;\; Dv(x)  \in D((-A)^{1/2}),\; x \in H , \;\; (-A)^{1/2} Dv \in B_b(H,H)\}$  endowed with the norm
$$ 
\| v\|_E = \| v\|_0 +  \|  (-A)^{1/2} Dv\|_0, \;\;\; v \in E.
$$
We first prove  
\begin{lemma} \label{stima1} 
 %There exists 
% Let $\eta \in (0,1/3)$;class
 % possibly depending on $A$,  such that, 
 For any $z \in H$,  $F \in C_b(H,H)$ which  verify 
 % \begin{equation*}
$\| F- z \|_0 < 1/4,$
%\end{equation*}
for any $\lambda  \ge 1 $, $g \in B_b(H)$,  there exists  a unique solution $u= u^{(z)}$ in $E$ to the integral equation
\begin{gather*}
u^{(z)} (x) =  \int_0^{\infty} e^{-\lambda t }  P_{t}^{(z  )} [g + \langle   [  F   - z]  , (-A)^{1/2}  Du^{(z)}    \rangle ] (x)dt,\;\;\; x \in H
\end{gather*}
 (we drop the dependence of $u^{(z)}$ from $\lambda$).
 Moreover   
\begin{gather*}
\| (-A)^{1/2}  Du^{(z)} \|_0 \le  12 \| g\|_0,\;\;\;  \| u^{(z)}\|_0 \le  4  \| g\|_0.
\end{gather*}
\end{lemma}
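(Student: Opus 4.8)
The plan is to realise $u^{(z)}$ as the fixed point of an affine contraction on the Banach space $E$, with the sharp bound of Theorem \ref{ss13} as the key input. I would first record that $(E,\|\cdot\|_E)$ is indeed complete. If $(v_n)\subset E$ is Cauchy, then since $\langle Ax,x\rangle\le-\omega|x|_H^2$ forces $\lambda_k\ge\omega$, the operator $(-A)^{-1/2}$ is bounded and $\|Dv\|_0\le\omega^{-1/2}\|(-A)^{1/2}Dv\|_0$; hence $(v_n)$ is Cauchy in $C^1_b(H)$ and converges there to some $v$, while $(-A)^{1/2}Dv_n\to w$ in $B_b(H,H)$. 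As $(-A)^{1/2}$ is closed and $Dv_n(x)\to Dv(x)$ with $(-A)^{1/2}Dv_n(x)\to w(x)$ pointwise, closedness yields $Dv(x)\in D((-A)^{1/2})$ and $(-A)^{1/2}Dv=w$, so $v\in E$ and $v_n\to v$ in $E$.

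Next I would define $\mathcal T:E\to E$ by
\[
\mathcal T(v)(x) = \int_0^\infty e^{-\lambda t}\, P_t^{(z)}\Big[\, g + \langle F-z, (-A)^{1/2}Dv\rangle\,\Big](x)\,dt,\qquad x\in H.
\]
For $v\in E$ the integrand $g_v := g + \langle F-z,(-A)^{1/2}Dv\rangle$ is Borel and bounded, since $F-z\in C_b(H,H)$ and $(-A)^{1/2}Dv\in B_b(H,H)$; thus Remark \ref{ss} gives $\mathcal T(v)\in C^1_b(H)$ and Theorem \ref{ss13}(i) gives $D\mathcal T(v)(x)\in D((-A)^{1/2})$, $(-A)^{1/2}D\mathcal T(v)\in B_b(H,H)$, and $\|(-A)^{1/2}D\mathcal T(v)\|_0\le\frac{\pi}{\sqrt2}\|g_v\|_0$, so $\mathcal T(E)\subset E$. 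Because $\mathcal T$ is affine, $\mathcal T(v_1)-\mathcal T(v_2)$ is again of the form $\int_0^\infty e^{-\lambda t}P_t^{(z)}[\langle F-z,(-A)^{1/2}(Dv_1-Dv_2)\rangle]\,dt$. Combining the pointwise Cauchy--Schwarz bound $|\langle F(x)-z,(-A)^{1/2}(Dv_1-Dv_2)(x)\rangle|\le\|F-z\|_0\,\|(-A)^{1/2}(Dv_1-Dv_2)\|_0$ with Theorem \ref{ss13} (for the $\|(-A)^{1/2}D\cdot\|_0$ part) and with the Markov bound $\|\int_0^\infty e^{-\lambda t}P_t^{(z)}h\,dt\|_0\le\frac1\lambda\|h\|_0$ (for the $\|\cdot\|_0$ part) gives
\[
\|\mathcal T(v_1)-\mathcal T(v_2)\|_E \le \Big(\tfrac1\lambda+\tfrac{\pi}{\sqrt2}\Big)\|F-z\|_0\,\|(-A)^{1/2}(Dv_1-Dv_2)\|_0 \le \tfrac14\Big(1+\tfrac{\pi}{\sqrt2}\Big)\|v_1-v_2\|_E,
\]
where $\lambda\ge1$ and $\|F-z\|_0<1/4$ were used. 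Since $\frac14(1+\frac{\pi}{\sqrt2})<1$, $\mathcal T$ is a contraction and the Banach fixed point theorem delivers the unique $u^{(z)}\in E$ solving the integral equation.

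For the a priori bounds I would feed the fixed point back into the same two estimates. Writing $M=\|(-A)^{1/2}Du^{(z)}\|_0$ and $\delta=\|F-z\|_0<1/4$, Theorem \ref{ss13} gives $M\le\frac{\pi}{\sqrt2}(\|g\|_0+\delta M)$, hence $M\le\frac{\pi/\sqrt2}{1-(\pi/\sqrt2)\delta}\|g\|_0\le 12\|g\|_0$ (the constant is in fact below $5$, since $(\pi/\sqrt2)\delta<\pi/(4\sqrt2)<0.56$). Then the Markov bound with $\lambda\ge1$ yields $\|u^{(z)}\|_0\le\frac1\lambda(\|g\|_0+\delta M)\le\|g\|_0+\tfrac14 M\le 4\|g\|_0$.

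I expect the only genuinely non-routine points to be the completeness of $E$ (handled via the closedness of $(-A)^{1/2}$) and, above all, the bookkeeping of constants: the threshold $\|F-z\|_0<1/4$ is calibrated precisely so that $\frac14(1+\frac{\pi}{\sqrt2})$ is a contraction factor and so that $1-(\pi/\sqrt2)\|F-z\|_0$ stays bounded away from $0$ in the a priori estimate. Here the sharp value $\pi/\sqrt2$ from Theorem \ref{ss13} is essential; a non-optimal constant could destroy both the contraction and the bounds. Everything else reduces to the mapping and contraction properties of $\mathcal T$ assembled from Remark \ref{ss} and Theorem \ref{ss13}.
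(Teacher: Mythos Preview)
Your proof is correct and follows essentially the same approach as the paper: you define the same affine operator on $E$, use Theorem \ref{ss13} to verify it maps $E$ to $E$ and is a contraction under $\|F-z\|_0<1/4$, and then bootstrap the fixed point to obtain the stated bounds. You additionally supply the (routine) verification that $E$ is complete and you track the constants more carefully, but the core argument and its reliance on the sharp $\pi/\sqrt2$ estimate of Theorem \ref{ss13} are identical.
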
 
 \begin{proof}
  We define $T: E \to E$, $Tu(x)=  \int_0^{\infty} e^{-\lambda t }   P_{t}^{( z  )} [g + \langle   [   F   - z]  , (-A)^{1/2}  Du    \rangle ] (x)dt,$ $u \in E$, $ x \in H$. Note that if $u \in E$ then $g + \langle   [  F   - z]  , (-A)^{1/2}  Du    \rangle  $ $\in B_b(H)$ and so by {    Theorem \ref{ss13}, } $Tu \in E$.
    We prove that $T$ is a strict contraction. Since 
 \begin{gather*}
Tu(x) - Tv(x) =  \int_0^{\infty} e^{-\lambda t }   P_{t}^{ (z  )} (\langle   [  F   - z]  , (-A)^{1/2}  [Du - Dv]    \rangle )  (x)dt, 
\end{gather*}
 we find 
 %\begin{gather*}  
$\| Tu - Tv  \|_0 \le \frac{1}{ 4 \lambda} \|u-v \|_E \le  \frac{1}{4} \|u-v \|_E$
%\end{gather*}
  and by Theorem \ref{ss13}, {   since $\frac{\pi} {\sqrt 2} < 3$, }
  \begin{gather*}
\| (-A)^{1/2}  D [Tu]  - (-A)^{1/2}  D [Tv]   \|_0 \le \frac{3}{4}  \|u-v \|_E. 
\end{gather*}
  Hence we have a unique fixed point $u^{(z)}\in E$ which solves the integral equation.   Moreover
\begin{gather*}
\| (-A)^{1/2}  Du^{(z)} \|_0 \le  3 (\| g\|_0 + \frac{1}{4} \| (-A)^{1/2}  Du \|_0)
\end{gather*}
(see Theorem \ref{ss13}) and the assertion follows.
  \end{proof}

Let  $F \in C_b(H,H)$ which    {   verifies}  \eqref{111}.  
 Set, for any $n \ge 1,$ 
\begin{equation}
\label{e29}
 \begin{array}{l}
F_n(x)=\int_H F(e^{\frac1nA}x+y)N(0,
Q_{\frac1n})(dy),\quad x\in H.
\end{array} 
\end{equation}
Then $F_n$ is of $C^\infty$ class and all its derivatives are
bounded. Moreover $\|F_n\|_0\le \|F\|_0$, $n \ge 1$. It is not difficult  to prove  that 
 \begin{equation}
\label{e30} F_n(x)\to F(x),\;\; x \in H, 
\end{equation}
as $n \to \infty$,  and $\| F_n- z \|_0 < 1/4 $, for any $n \ge 1$. 

Recalling  that in \eqref{e12} $f \in C^2_b(H)$ we  consider   classical bounded solutions to    the following finite-dimensional  equations
\begin{equation} 
\label{e3351}
\lambda u _{nm}- L u _{nm}-\langle  (-A)^{1/2} \pi_m F_n \circ \pi_m, Du _{nm}   \rangle= f  \circ \pi_m, \;\; n,m \ge 1,\; \lambda \ge 1.
\end{equation}
where  $\pi_m=\sum_{j=1}^me_j\otimes e_j$.  We write $ f_m = f  \circ \pi_m$, $z_m = \pi_m z $ and
 $F_{nm} = \pi_m F_n  \circ \pi_m$, $A_m =  A \, \pi_m $.  Note that  
 \begin{equation} \label{w55} 
  \begin{array}{l}
  \| F_{nm} -  z_m \|_0 < 1/4,\;\;\; n,m \ge 1.
\end{array}  
  \end{equation}
 We have  $u_{nm} = u_{nm} \circ \pi_m$, 
  $L  u _{nm} = L_m  u _{nm}$  with   $ L_{m} u _{nm} $ $=\frac12\;\mbox{\rm Tr}\;[ D^2 u _{nm} (x)]$ $+\langle A_m x,D u _{nm} (x) \rangle $, $x\in H.$  
  Indeed, for  $\lambda  \ge 1$, $n,m \ge 1$,  equation \eqref{e3351} can be solved by  considering the  associated  equation in $\R^m$ which is like
%a Kolmogorov equation  like  
\begin{equation} \label{w33}
\lambda v(y)- \frac{1}{2} \triangle v(y) - \langle  B y  , Dv(y)    \rangle -  \langle 
G(y) , Dv(y)    \rangle= g(y),\;\;\; y \in \R^m,
\end{equation}
where $B$ is a given $m \times m$ real matrix and $g,G$ are regular and bounded functions 
 (to this purpose one can use, for instance,  the Schauder estimates proved in \cite{DL}).

Thus, for any $m,n \ge 1$, there exist classical cylindrical functions $u_{nm} \in C^2_b (H)$ which solve 
\eqref{e3351}. Such functions are the unique bounded classical solutions; however in order to prove uniqueness for SPDE \eqref{sde} it is  important  to show existence of classical solutions.
    We can rewrite  \eqref{e3351} as 
 \begin{gather*}
\lambda u _{nm}(x)- L u _{nm}(x)-\langle  (-A)^{1/2}  z_m , Du _{nm} (x)  \rangle
\\    = f  \circ \pi_m(x) + \langle  (-A )^{1/2}  [  \pi_m F_n  \circ \pi_m (x) - z_m]  , Du _{nm}(x)   \rangle,\;\;\; x \in H,
\end{gather*}
and so we obtain the following finite-dimensional representation formula:
 \begin{equation}\label{344}  
u_{nm} (x) =  \int_0^{\infty} e^{-\lambda t }  {    P_{t}^{(z_m)} } [f_m 
+ \langle   [  \pi_m F_n  \circ \pi_m - z_m]  , (-A)^{1/2}  Du _{nm}   \rangle ] (x)dt,\;\;\; x \in H. 
\end{equation}  
Note that, since $f_m = f \circ \pi_m$,    
\begin{gather*}
  P_{t}^{( z)} f_m  (x) =
 P_{t}^{( z_m)} f_m  (x)   =\, \int_{H} f(e^{t A_m } x + \pi_m y + (-A_m)^{-1/2}[z_m - e^{tA_m} z_m] )  \; { N} \big (0 ,   Q_t )\, (dy).
\end{gather*}
By  Lemma \ref{stima1}  we have the bounds   
 \begin{gather} \label{2ee}
 \|  (-A)^{1/2}  Du _{nm} \|_0 \le 12 \| f\|_0,\;\;\;\; 
 \|    u _{nm} \|_0 \le 4 \| f\|_0 .
\end{gather}
  Now let us introduce, for $x \in H,$ $m \ge 1$, $\lambda \ge 1$, the solution $u_m = u_m^{(z)} \in E$ to 
 \begin{gather} \label{sqq}
 u_{m} (x) =  \int_0^{\infty} e^{-\lambda t }   P_{t}^{( z  )} [f_m + \langle   [  \pi_m F   \circ \pi_m - z_m]  , (-A)^{1/2}  Du _{m}   \rangle ] (x)dt,\; x \in H. 
 \end{gather}
 By Lemma \ref{stima1}, we know that  
 \begin{equation}\label{e66}
\| (-A)^{1/2}  Du_m \|_0 \le  12 \| f\|_0.
\end{equation}
  \begin{lemma} \label{stima2} Let $\lambda \ge 1$,  $z \in H$, $f \in C^2_b (H)$  and  consider  classical bounded solutions $u_{nm}$ of equation \eqref{e3351} when 
  $F \in C_b(H,H)$ verifies \eqref{111}.
   We have, for any $x \in H$, $m \ge 1$, 
\begin{align} \label{conv1}
\lim_{n\to \infty}u_{nm}(x) =u_m(x) \;\; \text{and} 
\;\;\; {    \sup_{n,m \ge 1}  } \| u_{nm}  \|_{0} =
C_{} < \infty,
 \end{align}
\end{lemma}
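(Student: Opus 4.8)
The plan is to read off the uniform bound from the estimates already proved and to reduce the pointwise convergence to a Neumann-series argument driven by the contraction of Lemma \ref{stima1} and the weak-continuity statement Theorem \ref{ss13}(ii). The bound $\sup_{n,m\ge1}\|u_{nm}\|_0\le 4\|f\|_0$ is precisely the second inequality in \eqref{2ee}, so I simply take $C=4\|f\|_0$. For the convergence, fix $m$ and set $w_{nm}=u_{nm}-u_m$. Since the fixed-point maps in \eqref{344} and \eqref{sqq} preserve the closed subspace of $E$ of functions depending only on the first $m$ coordinates, both $u_{nm}$ and $u_m$ are cylindrical, and on the functions that appear one may replace $P_t^{(z)}$ by $P_t^{(z_m)}$; thus the two representations are driven by the same semigroup and the same source $f_m$. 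Subtracting them and splitting the drift difference as $\langle\pi_m F_n\circ\pi_m-z_m,(-A)^{1/2}Dw_{nm}\rangle+\langle\pi_m(F_n-F)\circ\pi_m,(-A)^{1/2}Du_m\rangle$ yields $w_{nm}=T_n^0 w_{nm}+h_n$, where
\[ T_n^0 v=\int_0^\infty e^{-\lambda t}P_t^{(z_m)}\langle\pi_m F_n\circ\pi_m-z_m,(-A)^{1/2}Dv\rangle\,dt,\qquad h_n=\int_0^\infty e^{-\lambda t}P_t^{(z_m)}g_n\,dt, \]
with $g_n=\langle\pi_m(F_n-F)\circ\pi_m,(-A)^{1/2}Du_m\rangle$.

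Exactly as in Lemma \ref{stima1}, and using that the mollification \eqref{e29} does not increase $\|\cdot-z\|_0$, so that $\|\pi_m F_n\circ\pi_m-z_m\|_0\le\|F_n-z\|_0\le\|F-z\|_0<1/4$, together with $\tfrac{\pi}{\sqrt2}<3$ and $\lambda\ge1$, the operator $T_n^0$ is a contraction on $E$ with constant $\kappa\le\|F-z\|_0(\tfrac1\lambda+\tfrac{\pi}{\sqrt2})<1$ that is uniform in $n$ and $m$. Hence $I-T_n^0$ is invertible and $w_{nm}=\sum_{j\ge0}\phi_n^{(j)}$ with $\phi_n^{(j)}=(T_n^0)^j h_n$ and $\|\phi_n^{(j)}\|_E\le\kappa^j\|h_n\|_E$. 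Moreover \eqref{e66} gives $\|g_n\|_0\le 24\|F\|_0\|f\|_0$, hence $\sup_n\|h_n\|_E=:C'<\infty$, and the tails obey $\sum_{j\ge J}\|\phi_n^{(j)}\|_0\le\kappa^J C'/(1-\kappa)$, which is uniform in $n$ and vanishes as $J\to\infty$.

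The main obstacle is that $h_n\to0$ only pointwise — its $E$-norm does not vanish, since $\|g_n\|_0\not\to0$ — so one cannot simply estimate $(I-T_n^0)^{-1}h_n$ in norm and must instead carry pointwise convergence through the Neumann series. I would prove by induction on $j$ that, for each fixed $j$, $\phi_n^{(j)}(x)\to0$ for every $x$ and $\langle(-A)^{1/2}D\phi_n^{(j)}(x),e_k\rangle\to0$ for every $x$ and every $k$, with $E$-norms bounded uniformly in $n$. The base case uses $g_n\to0$ pointwise (from \eqref{e30}) with dominated convergence for $h_n(x)\to0$, and Theorem \ref{ss13}(ii), applied with data $g_n\to0$, for the componentwise vanishing of $(-A)^{1/2}Dh_n$. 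In the inductive step the integrand $\langle\pi_m F_n\circ\pi_m-z_m,(-A)^{1/2}D\phi_n^{(j)}\rangle$ is a finite sum over the first $m$ coordinates whose factors are controlled by the inductive hypothesis and the uniform bound on $F_n$, so it tends to $0$ pointwise while staying uniformly bounded; dominated convergence then gives $\phi_n^{(j+1)}(x)\to0$, and Theorem \ref{ss13}(ii) again gives the componentwise vanishing of $(-A)^{1/2}D\phi_n^{(j+1)}$.

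Finally, given $\varepsilon>0$ I choose $J$ so that the uniform tail bound is $<\varepsilon/2$, and then $n$ so large that $\sum_{j<J}|\phi_n^{(j)}(x)|<\varepsilon/2$ (a fixed finite number of terms, each tending to $0$ by the inductive claim). This shows $w_{nm}(x)\to0$, i.e. $u_{nm}(x)\to u_m(x)$, for every $x\in H$, which together with the uniform bound completes the proof of \eqref{conv1}.
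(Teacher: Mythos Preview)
Your proof is correct, and it takes a somewhat different route from the paper's. Both arguments subtract the two fixed-point representations after observing that $u_m$ is cylindrical in the first $m$ coordinates, so that $P_t^{(z)}$ may be replaced by $P_t^{(z_m)}$. The paper splits the drift difference the other way, as
\[
\langle F_{nm}-F_m,(-A)^{1/2}Du_{nm}\rangle+\langle F_m-z_m,(-A)^{1/2}(Du_{nm}-Du_m)\rangle,
\]
arriving at $w_{nm}=a_n+T_0w_{nm}$ with a \emph{fixed} contraction $T_0$; it then disposes of the $T_0w_{nm}$ term by invoking dominated convergence and Theorem~\ref{ss13}(ii). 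As written, that last step is rather elliptic: Theorem~\ref{ss13}(ii) needs pointwise convergence of the source function as input, which here already involves $(-A)^{1/2}(Du_{nm}-Du_m)$, so some iterative mechanism is implicitly required. Your splitting places the fixed object $Du_m$ in the source $h_n$ and the $n$-dependent drift $F_{nm}-z_m$ in the contraction $T_n^0$, which makes $g_n\to 0$ transparent from~\eqref{e30}; the price is that $T_n^0$ varies with $n$, but you correctly observe that its contraction constant $\kappa\le\|F-z\|_0(\tfrac1\lambda+\tfrac{\pi}{\sqrt2})<1$ is uniform in $n,m$ via~\eqref{w55}. Your Neumann-series induction---propagating pointwise convergence and componentwise vanishing of $(-A)^{1/2}D\phi_n^{(j)}$ through each iterate by Theorem~\ref{ss13}(ii), with a geometric tail cutoff uniform in $n$---is exactly the device that converts pointwise convergence of the data into pointwise convergence of the fixed point. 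The same iteration would close the paper's argument with its own splitting; what your proof adds is making this step explicit.
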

\begin{proof} We only need to  prove the first assertion. Let us fix $m \ge 1$.
\begin{gather*}
u_{nm}(x) - u_m(x)
 = \int_0^{\infty} e^{-\lambda t }   P_{t}^{( z )} \Big (\langle   [  \pi_m F_n   \circ \pi_m  -    \pi_m F   \circ \pi_m  ]
, (-A)^{1/2}  Du _{nm}   \rangle
\\
+ \langle   [  \pi_m F   \circ \pi_m - z_m]  , (-A)^{1/2}  Du _{nm} - (-A)^{1/2}  Du _{m}   \rangle \Big)(x)dt.
\end{gather*}
 Using the uniform bound on  $\| (-A)^{1/2}  Du _{nm} \|_0$ and the fact that 
    $(\pi_m F_n   \circ \pi_m  -    \pi_m F   \circ \pi_m  )$ is uniformly bounded and converges pointwise to zero as $n \to \infty$ we obtain  
\begin{gather*}
\big | \int_0^{\infty} e^{-\lambda t }   P_{t}^{(z )} \big( \langle   [  \pi_m F_n   \circ \pi_m  -    \pi_m F   \circ \pi_m  ]
, (-A)^{1/2}  Du _{nm}   \rangle \big) (x) \big | dt \to 0,\;\; \text{as}\, n \to \infty.
\end{gather*}    
 It remains to prove that
 \begin{gather*}
\lim_{n \to \infty}\big | \int_0^{\infty} e^{-\lambda t }   P_{t}^{(z )}  \big(
 \langle   [  \pi_m F   \circ \pi_m - z_m]  , 
 (-A)^{1/2}  Du _{nm} - (-A)^{1/2}  Du _{m}   \rangle \big) (x)dt \big | =0. 
\end{gather*}
Using the bound  \eqref{e66} and Theorem \ref{ss13} (see also  \eqref{w12})    we can apply the dominated convergence theorem and obtain the assertion.    
\end{proof}

\begin{lemma} \label{stima3} Let $\lambda \ge 1$, $z \in H$,  $f \in C^2_b (H)$  and  consider  $u_{m}$ given in \eqref{sqq} with $F$ verifying \eqref{111}.  We have, for any $x \in H$, 
\begin{align} \label{conv13}
\lim_{m\to \infty}u_{m}(x) =u(x) \;\; \text{and}  
\;\;\; \sup_{m \ge 1} \| u_{m}  \|_{0} < \infty,
 \end{align} 
\end{lemma}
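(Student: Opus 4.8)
The plan is to recast both $u_m$ and the target $u=u^{(z)}$ (the solution of \eqref{eee}, equivalently the fixed point furnished by Lemma \ref{stima1} with $g=f$) as fixed points of the same contraction, and to pass to the limit $m\to\infty$ using the weak-convergence statement of Theorem \ref{ss13}(ii). Write $R^{(z)}(\lambda)h=\int_0^\infty e^{-\lambda t}P_t^{(z)}h\,dt$, set $G:=F-z$ and $G_m:=\pi_m F\circ\pi_m-z_m$, and note $\|G\|_0<1/4$ and $\|G_m\|_0\le\|G\|_0<1/4$ for every $m$; since $\pi_m z\to z$, $\pi_m x\to x$ and $F$ is continuous, $G_m(x)\to G(x)$ strongly in $H$ for each $x$. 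Equation \eqref{sqq} is exactly the fixed-point equation of Lemma \ref{stima1} with $F$ replaced by $\tilde F_m:=G_m+z\in C_b(H,H)$ (so $\tilde F_m-z=G_m$) and $g=f_m$; hence $u_m\in E$ exists, is unique, and $\|u_m\|_0\le 4\|f_m\|_0\le 4\|f\|_0$, $\|(-A)^{1/2}Du_m\|_0\le 12\|f\|_0$. This already yields $\sup_m\|u_m\|_0<\infty$, the second assertion.

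First I would reduce the convergence $u_m\to u$ to a scalar convergence of the coupling terms. Put $\Phi_m:=\langle G_m,(-A)^{1/2}Du_m\rangle\in B_b(H)$ and $\Phi:=\langle G,(-A)^{1/2}Du\rangle$, so that $u_m=R^{(z)}(\lambda)[f_m+\Phi_m]$ and $u=R^{(z)}(\lambda)[f+\Phi]$, and
\begin{equation*}
u_m(x)-u(x)=\int_0^\infty e^{-\lambda t}P_t^{(z)}\big[(f_m-f)+(\Phi_m-\Phi)\big](x)\,dt .
\end{equation*}
Since $f_m\to f$ pointwise with $\|f_m\|_0\le\|f\|_0$, it suffices to prove $\Phi_m\to\Phi$ pointwise with $\sup_m\|\Phi_m\|_0<\infty$: the integrand then tends to $0$ for each $t$ (dominated convergence inside the Gaussian integral defining $P_t^{(z)}$) and is dominated by a constant multiple of $e^{-\lambda t}$, so two further applications of dominated convergence give $u_m(x)\to u(x)$.

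The core of the argument, and the main obstacle, is the pointwise convergence $\Phi_m\to\Phi$, which is genuinely circular: $\Phi_m$ is built from $(-A)^{1/2}Du_m$, which itself solves a fixed-point equation involving $\Phi_m$, and the data $f_m+\Phi_m$ converge only pointwise, not in $\|\cdot\|_0$, so the sup-norm contraction of Lemma \ref{stima1} cannot transfer convergence. To break the circularity I would introduce the linear maps $K_m,K:B_b(H)\to B_b(H)$, $K_m\psi:=\langle G_m,(-A)^{1/2}DR^{(z)}(\lambda)\psi\rangle$ and $K\psi:=\langle G,(-A)^{1/2}DR^{(z)}(\lambda)\psi\rangle$. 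By Theorem \ref{ss13}(i), $\|K_m\|\le\frac{\pi}{\sqrt2}\|G_m\|_0=:q<\frac34<1$ uniformly in $m$, and the relation $\Phi_m=K_m[f_m+\Phi_m]$ gives the Neumann expansion $\Phi_m=\sum_{j\ge1}K_m^{\,j}f_m$, with the analogous $\Phi=\sum_{j\ge1}K^{\,j}f$ for the limit.

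The key technical step is the pointwise stability of $K_m$: if $\eta_m\to\eta$ pointwise with $\sup_m\|\eta_m\|_0<\infty$, then $K_m\eta_m\to K\eta$ pointwise and $\|K_m\eta_m\|_0\le q\sup_m\|\eta_m\|_0$. This is precisely where Theorem \ref{ss13}(ii) is used: with $v_m:=R^{(z)}(\lambda)\eta_m$, part (ii) gives $\langle(-A)^{1/2}Dv_m(x),h\rangle\to\langle(-A)^{1/2}Dv(x),h\rangle$ for every $h$, i.e. $(-A)^{1/2}Dv_m(x)\rightharpoonup(-A)^{1/2}Dv(x)$ weakly in $H$ with norms bounded by $\frac{\pi}{\sqrt2}\sup_m\|\eta_m\|_0$; combined with the strong convergence $G_m(x)\to G(x)$ this yields $\langle G_m(x),(-A)^{1/2}Dv_m(x)\rangle\to\langle G(x),(-A)^{1/2}Dv(x)\rangle$ by the usual strong-times-weak pairing. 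Iterating along $\eta_m^{(0)}=f_m\to f$, $\eta_m^{(j+1)}=K_m\eta_m^{(j)}$, I obtain $K_m^{\,j}f_m\to K^{\,j}f$ pointwise for each fixed $j$, with $\|K_m^{\,j}f_m\|_0\le q^{\,j}\|f\|_0$. These bounds are summable and uniform in $m$, so dominated convergence for the series gives $\Phi_m=\sum_{j\ge1}K_m^{\,j}f_m\to\sum_{j\ge1}K^{\,j}f=\Phi$ pointwise, closing the argument. (As a consistency check, the same scheme run with $F_n\to F$ in place of $\pi_mF\circ\pi_m\to F$ reproduces the convergence in Lemma \ref{stima2}.)
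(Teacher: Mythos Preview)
Your argument is correct. The paper works with the same ingredients --- the representation $u_m=R^{(z)}(\lambda)[f_m+\Phi_m]$, the uniform bound $\|(-A)^{1/2}Du_m\|_0\le 12\|f\|_0$, and the pointwise convergence statement \eqref{s1144}/\eqref{w12} --- but organizes things differently: it splits $u_m-u$ into a ``data'' piece involving $(f_m-f)+\langle G_m-G,(-A)^{1/2}Du_m\rangle$ and a ``coupling'' piece $\langle G,(-A)^{1/2}(Du_m-Du)\rangle$, and for the latter simply appeals to Theorem~\ref{ss13} and \eqref{w12} together with dominated convergence. As you point out, a direct appeal to \eqref{w12} at that stage presupposes the pointwise convergence $f_m+\Phi_m\to f+\Phi$, which is precisely what is being proved; your Neumann-series expansion $\Phi_m=\sum_{j\ge 1}K_m^{\,j}f_m$, combined with the pointwise stability $K_m\eta_m\to K\eta$ (the weak convergence from \eqref{s1144} paired with the strong convergence $G_m(x)\to G(x)$), is a clean way to make this step rigorous by iterating the underlying contraction explicitly. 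An equivalent variant that stays closer to the paper's decomposition is to observe that $w_m:=u_m-u$ is the unique $E$-fixed point of $v\mapsto R^{(z)}(\lambda)\big[(f_m-f)+\langle G_m-G,(-A)^{1/2}Du_m\rangle+\langle G,(-A)^{1/2}Dv\rangle\big]$ and to apply the same series argument to $w_m$.
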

\begin{proof} The second bound is clear by Lemma \ref{stima1}.
 We  prove the first assertion.  
\begin{gather*}  
\begin{array}{l}
 u_{m}(x) - u(x)
  = \int_0^{\infty} e^{-\lambda t }   P_{t}^{(z)}  \big (f_m - f
 + \langle     (\pi_m F   \circ \pi_m  - z_m) -    (  F -z)    
, (-A)^{1/2}  Du _{m}   \rangle \big )(x) dt 
\\
+ \int_0^{\infty} e^{-\lambda t }   P_{t}^{(z)}  \big (   \langle      F    - z  , (-A)^{1/2}  Du _{m} - (-A)^{1/2}  Du _{}   \rangle \big)(x)dt.
\end{array}
\end{gather*}   
 Using the uniform bound on  $\| (-A)^{1/2}  Du _{m} \|_0$, the fact that 
    $([\pi_m F   \circ \pi_m - z_m] - [    F - z]     )$ and $(f_m - f)$ are uniformly bounded and both converge pointwise to 0 as $m \to \infty$  we obtain
\begin{gather*}
 \int_0^{\infty} e^{-\lambda t }   P_{t}^{(z)} \big(  
 f_m - f
 + \langle     (\pi_m F   \circ     \pi_m  - z_m) -    (  F -z)   
 , (-A)^{1/2}  Du _{m}   \rangle \big) (x) dt \to 0,
\end{gather*} 
 $x \in H,$
$\text{as}\, m \to \infty.$  It remains to prove that  
 \begin{gather*}
\lim_{m \to \infty} \,  \int_0^{\infty} e^{-\lambda t }   P_{t}^{ (z )}  \big(
  \langle      F    - z  , (-A)^{1/2}  Du _{m} - (-A)^{1/2}  Du _{}   \rangle \big) (x)dt  =0,\;\; x\in H.
\end{gather*}
Using the bounds  \eqref{e66} and Theorem \ref{ss13} (see also  \eqref{w12})    we can apply the dominated convergence theorem and obtain the assertion.
 \end{proof}

\subsection{ Weak uniqueness  
%nonlinear
 when $\| F- z \|_0 < 1/4 $   }

Here we will apply the  regularity results of the previous section to obtain  
 \begin{lemma}\label{loc}   Let $x \in H$ and consider the  SPDE \eqref{sde}. 
   If there exists   $z \in H$  such that \eqref{111} holds 
 then we have uniqueness in law for \eqref{sde}. 
\end{lemma}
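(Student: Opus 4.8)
The plan is to transfer the question to the martingale problem and then exploit the Kolmogorov equation studied in Section 5.2. By Proposition \ref{ser} a weak mild solution of \eqref{sde} is the same object as a solution of the martingale problem for $(\L,\delta_x)$, so it suffices to prove well-posedness of the latter. Weak existence is already granted by the first part of Theorem \ref{base}, hence by Theorem \ref{ria} I only need to show that any two martingale solutions $X$ and $Y$ starting at $x$ have the same one-dimensional marginals. I would obtain this by proving that the resolvent value $\E\int_0^\infty e^{-\lambda s}f(X_s)\,ds$ does not depend on the solution, for every $f\in C^2_b(H)$ and every $\lambda\ge 1$.

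First I would use the classical cylindrical solutions $u_{nm}\in C^2_b(H)$ of the finite-dimensional equations \eqref{e3351}; recall $\lambda u_{nm}-\L_{nm}u_{nm}=f_m$ with $\L_{nm}v=Lv+\langle F_{nm},(-A)^{1/2}Dv\rangle$ and $F_{nm}=\pi_mF_n\circ\pi_m$. Since $\L u_{nm}=\L_{nm}u_{nm}+\langle F-F_{nm},(-A)^{1/2}Du_{nm}\rangle$, we get the pointwise identity $\L u_{nm}-\lambda u_{nm}=-f_m+\langle F-F_{nm},(-A)^{1/2}Du_{nm}\rangle$. Each $u_{nm}$ is bounded, cylindrical and has bounded derivatives, but it is not compactly supported; so I would first justify that it may be used in the martingale relation \eqref{mart} by truncating the associated $\tilde u_{nm}:\R^m\to\R$ with cut-offs $\eta(\cdot/R)$ and letting $R\to\infty$, the bounded derivatives allowing dominated convergence. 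Applying the martingale property to $e^{-\lambda s}u_{nm}(X_s)$ and letting the horizon tend to $+\infty$ (here $\lambda\ge1$ and $\|u_{nm}\|_0$ is bounded, cf. \eqref{2ee}, so the boundary term vanishes) yields
\[
u_{nm}(x)=\E\int_0^\infty e^{-\lambda s}\Big[f_m(X_s)-\langle F(X_s)-F_{nm}(X_s),(-A)^{1/2}Du_{nm}(X_s)\rangle\Big]\,ds.
\]

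Then I would pass to the limit, first $n\to\infty$ and then $m\to\infty$. On the left, Lemma \ref{stima2} gives $u_{nm}(x)\to u_m(x)$ and Lemma \ref{stima3} gives $u_m(x)\to u(x)$, where $u=u^{(z)}$ is the solution furnished by Lemma \ref{stima1}, which depends only on $f,\lambda,z$ and not on the solution. On the right, the crucial point is that $F-F_{nm}$ is uniformly bounded and tends to $0$ pointwise, while $\|(-A)^{1/2}Du_{nm}\|_0\le 12\|f\|_0$ is uniform by \eqref{2ee}; this is exactly where the new regularity estimate is used. Hence the drift term is dominated and its time-and-expectation integral vanishes in the limit, leaving
\[
u(x)=\E\int_0^\infty e^{-\lambda s}f(X_s)\,ds .
\]
Since the right-hand side for $X$ and for $Y$ equals the same $u(x)$ for every $\lambda\ge1$, and since $s\mapsto\E f(X_s)$ is bounded and (by path-continuity and dominated convergence) continuous, uniqueness of the Laplace transform forces $\E f(X_s)=\E f(Y_s)$ for all $s\ge0$ and all $f\in C^2_b(H)$. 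As $C^2_b(H)$ is measure-determining, the one-dimensional marginals of $X$ and $Y$ coincide; Theorem \ref{ria} then gives well-posedness of the martingale problem for $\L$, and Proposition \ref{ser} translates this into uniqueness in law for \eqref{sde}.

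The main obstacle I expect is twofold, and both difficulties are the very reason Section 5.2 was set up. Analytically, the substitution of the non-compactly-supported cylindrical functions $u_{nm}$ into the martingale identity must be made rigorous by truncation, and the limit in the term $\langle F-F_{nm},(-A)^{1/2}Du_{nm}\rangle$ can only be controlled because of the uniform bound on $\|(-A)^{1/2}Du_{nm}\|_0$ coming from Theorem \ref{ss13} and Lemma \ref{stima1}; without the optimal $(-A)^{1/2}$-regularity this passage would break down. The role of hypothesis \eqref{111} is precisely to make the fixed-point argument of Lemma \ref{stima1} a contraction, so that $u$ exists, is solution-independent, and carries the needed bounds, which is what makes the whole scheme close.
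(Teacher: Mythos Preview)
Your proposal is correct and follows essentially the same route as the paper: reduce to the martingale problem via Proposition~\ref{ser} and Theorem~\ref{ria}, test against the cylindrical solutions $u_{nm}$ of \eqref{e3351}, exploit the uniform bound \eqref{2ee} on $\|(-A)^{1/2}Du_{nm}\|_0$ together with Lemmas~\ref{stima2} and~\ref{stima3} to pass to the limit, and conclude by uniqueness of the Laplace transform. The only technical difference is that the paper bypasses your truncation step by applying the finite-dimensional It\^o formula directly to $u_{nm}(X_{t,m})=u_{nm}(X_t)$, using that the projection $X_{t,m}=\pi_m X_t$ satisfies the explicit $\R^m$-valued SDE \eqref{maga}; this is slightly cleaner than extending \eqref{mart} to non-compactly-supported cylindrical functions, but the two devices are equivalent.
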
 
\begin{proof} By Section 4,  for any $x \in H$, there exists a weak mild solution starting at $x \in H$. Equivalently, by Proposition \ref{ser}, for any $x \in H$, 
 there exists a  solution to the martingale problem for $({\cal L}, \delta_x)$.

We will prove   that given  two weak  mild solutions $X$ and $Y$ which both solve \eqref{sde} and start at $x$ we have that the law of $X_t$ coincides with the law of $Y_t$ on ${\mathcal B}(H)$, for any $t \ge 0$. By Theorem \ref{ria} we will deduce that $X$ a $Y$ have the same law on ${\mathcal B}(C([0, \infty);H)$.
  
Let us fix $x \in H$ and let $X = (X_t)  $ be a weak mild solution starting at $x \in H$.  We proceed in two steps.   First we prove useful formulas for finite-dimensional approximations of $X_t$ and then we pass to the limit obtaining a basic  identity for $X.$
\\ 
{\it Step 1. Some useful formulas for finite-dimensional approximations of $X_t$. }

For any $m\in\mathbb N$ we set $X_{t, m}:=\pi_m X_t,$
where    $\pi_m=\sum_{j=1}^me_j\otimes e_j$ (cf. formula \eqref{pp1}). We have
 \begin{equation*} 
 X_{t, m}=e^{tA} \pi_m  x +\int_{0}^{t}e^{\left(  t-s\right)  A} (-A_m)^{1/2}F^{}(X(s))
ds+\int_{0}^{t} \pi_m e^{\left(  t-s\right)  A}dW_{s},\;\;\; t \ge 0,
\end{equation*}
where  $A_m=A\pi_m $. Writing $\pi_m W_t = \sum_{k=1}^m W_t^{(k)} e_k$ it follows that 
\begin{equation} \label{maga} 
X_{t, m}
= \pi_mx+\int_0^tA_mX_{s}ds+\int_0^t  (-A_m)^{1/2}F(X_{s})ds+\pi_m W_t.
 \end{equation}
Let $f \in C^2_b (H)$.  As in \eqref{e3351} and \eqref{344}  we denote by $u _{nm }$ the classical solution of the equation
\begin{equation}
\label{e334s}
\lambda u _{nm}- L u _{nm}-\langle  (-A)^{1/2} \pi_m F_n \circ \pi_m, Du _{nm}   \rangle= f  \circ \pi_m,  \;\; \lambda \ge 1.
\end{equation} 
   Applying a finite-dimensional It\^o's formula to $u_{nm}(X_{t, m})
= u^{}_{nm}(X_{t})$ yields
\begin{equation}
\label{e35}
\begin{array}{lll}
du_{nm}(X_{t, m})=\frac12\;\mbox{\rm Tr}\;[D^2u _{n m}(X_{t, m})]dt
\\ \\ 
+\langle Du _{n m}(X_{t, m}), A_m X_t+ (-A_m)^{1/2}  F(X_t) \rangle dt
+\langle Du_{nm}(X_{t, m}), \pi_md W_t  \rangle.
\end{array}
\end{equation}
On the other hand, by \eqref{e334s} we have   
$$
\begin{array}{l}
\ds \lambda u _{nm}(X_{t, m})-\frac12\;\mbox{\rm Tr}\;[D^2u _{nm}(X_{t, m})]
\\ \ds
-\langle Du _{nm}(X_{t, m}), A_m X_{t, m}+ (-A_m)^{1/2}   F_n(X_{t, m}) \rangle= f (X_{t, m}). 
\end{array}
$$
 Taking into account \eqref{e35} and the fact that $u_{nm}  (\pi_m y) = u_{nm}  (y), $ $
y \in H,$ $\; n,m \ge 1$,
yields
   \begin{gather*}
u _{nm}(X_{t})  - u _{nm}(x) 
=\lambda \int_0^t  u _{nm}(X_{s})ds -
\int_0^t
f (X_{s, m})ds \\
+ \int_0^t \langle  (-A)^{1/2}  Du _{nm}(X_{s}),  (F(X_s)-F_n(X_{s, m}) )
\rangle ds + \int_0^t \langle Du _{nm}(X_{s}), \pi_m dW_s \rangle,  
\end{gather*}            
 $t \ge 0.$  
  By \eqref{2ee}   we deduce that   $(\int_0^t \langle Du _{nm}(X_{s}), \pi_m  dW_s \rangle)_{t \ge 0}$ is a martingale. Hence  
 % Now we can apply the expectation thanks to  the results of the previous %section (note that  $(\int_0^t \langle Du _{nm}(X_{s}), \pi_m  dW_s % %\rangle)$ is a martingale). We find  
  \begin{gather} \label{sino}     
\E [u _{nm}(X_{t})]  - u _{nm}(x)
= \lambda \int_0^t  \E [u _{nm}(X_{s})]ds -
\int_0^t
\E [f (X_{s, m})]ds 
\\ \nonumber 
+ \int_0^t \E [ \langle  (-A)^{1/2}  Du_{nm} (X_{s}),  (F(X_s)-F_n(X_{s, m}) )
\rangle] ds.    
\end{gather} 
{\it Step 2. Passing to the limit in \eqref{sino} as $n, m \to \infty$.}

We apply the convergence   results  of Lemmas \ref{stima2} and \ref{stima3}.   To this purpose note the pointwise convergence 
$$
 \begin{array}{l}
 \pi_m  F_n \circ \pi_m   \to F
\end{array}
$$
first as $n \to \infty$ and then as $m \to \infty$ (according to the convergence used in the  previous section). Moreover $\sup_{n,m \ge 1} \|\pi_m  F_n \circ \pi_m  \|_0$ $\le \| F\|_0$ and  $u_{m}  (\pi_m y) = u_{m}  (y),$ $
y \in H,\;$ $  m \ge 1$. 
  Let us fix $m \ge 1$.  First we can pass to the limit as $n \to \infty$ in   \eqref{sino} by the Lebesgue convergence theorem and   get  
\begin{gather*}
 \begin{array}{l} \ds
\E [u _{m}(X_{t})]  - u _{m}(x)
  = \lambda \int_0^t  \E [u _{m}(X_{s})]ds -
\int_0^t
\E f (X_{s,m})ds 
\\ \ds
+ \int_0^t \E [\langle (-A)^{1/2} Du _{m}(X_{s}),   (F(X_s)-F\circ \pi_m (X_{s}) )
\rangle ]ds.  
 \end{array} 
\end{gather*}
Then, using also Lemma \ref{stima3}, we pass to   the limit as $m \to \infty$ and
   arrive at     
 $$
  \begin{array}{l}
  \E [u _{}(X_{t})]  - u _{}(x)
=  \lambda \int_0^t \E [u _{}(X_{s})]ds -
\int_0^t
\E [f (X_{s})]ds.  
\end{array} 
$$
Integrating both sides over $[0, \infty)$ with respect to $e^{\lambda t} dt$ and using the Fubini theorem we arrive at the basic identity
%By the Fubini theorem, since $\lambda \int_{0}^{\infty}  e^{- \lambda t} %\int_0^t  \E [u _{}(X_{s})]ds dt = \int_0^{\infty} e^{- \lambda t}  \E [u  %_{}(X_{t})]dt$,  we get 
%the identity 
%\begin{gather*}
%\int_0^{\infty} e^{- \lambda t}  \E u _{}(X_{t})dt    - \frac{1}{\lambda} u %_{}%(x)
%=    \int_{0}^{\infty}  e^{- \lambda s} \int_0^t  \E [u _{}(X_{s})]ds  - 
%\frac{1}{\lambda} \int_{0}^{\infty}  e^{- \lambda s}  
%\E [f (X_{s})]ds.  
%\end{gather*}
%It follows the identity
$$
 \begin{array}{l}
    u _{}(x)
=     \int_{0}^{\infty}  e^{- \lambda s}  
\E [f (X_{s})]ds,  \; \lambda \ge 1.  
\end{array} 
$$ 
 Now if  $Y$ is another weak mild  solution 
    starting at $x$ and defined on $(\tilde
\Omega,$ $ \tilde {\mathcal F},
 (\tilde {\mathcal F}_{t}), \tilde \P) $.
  We obtain, for any $f \in C^2_b(H)$, 
$$
  \int_{0}^{\infty}  e^{- \lambda s}  
 \E [f (X_{s})]ds =   \int_{0}^{\infty}  e^{- \lambda s}  
\tilde \E [f (Y_{s})]ds, \;\; \lambda \ge 1.
$$
By the  uniqueness of the Laplace transform and using an approximation argument we find that 
$ \E [g (X_{s})] = \tilde \E [g (Y_{s})]$, for any $g \in C_b (H)$, $s \ge 0$.
 Applying Proposition \ref{ser} and Theorem \ref{ria} we find that $X$ and $Y$ have the same law on  ${\cal B}(C([0, \infty); H))$. 
\end{proof}

\subsection {Weak uniqueness when  $F \in C_b(H,H)$ }

 Here we prove uniqueness using the localization principle (cf. Theorem \ref{uni1}  and  Lemma \ref{loc}).

\begin{lemma}\label{loc1}   Let $x \in H$ and consider the  SPDE \eqref{sde}. 
   If $F \in C_b(H,H)$
  then we have uniqueness in law for \eqref{sde}.
\end{lemma}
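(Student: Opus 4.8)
The plan is to obtain uniqueness from the localization principle, Theorem \ref{uni1}, using Lemma \ref{loc} as the building block on each piece of a suitable open cover of $H$. First I would record that the standing hypothesis of Theorem \ref{uni1} is satisfied: since $F \in C_b(H,H)$ has, trivially, at most linear growth, the existence part of Theorem \ref{base} (proved in Section 4) together with Proposition \ref{ser} (i) provides, for every $x \in H$, a solution of the martingale problem for $(\L, \delta_x)$.

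Next I would construct the localizing data. Fix $x_0 \in H$. By continuity of $F$ one can choose $r = r(x_0) > 0$ so small that $\sup_{|x-x_0|_H \le r} |F(x) - F(x_0)|_H < 1/4$; set $z_{x_0} = F(x_0)$ and $U_{x_0} = \{ x \in H : |x-x_0|_H < r/2 \}$. Pick a continuous cutoff $\phi_{x_0} : H \to [0,1]$ (for instance a function of $|x-x_0|_H$) with $\phi_{x_0} \equiv 1$ on $U_{x_0}$ and $\phi_{x_0} \equiv 0$ outside $\{ |x-x_0|_H < r \}$, and define
$$
F_{x_0}(x) = z_{x_0} + \phi_{x_0}(x)\,(F(x) - z_{x_0}), \qquad x \in H.
$$
Then $F_{x_0} \in C_b(H,H)$, it coincides with $F$ on $U_{x_0}$, and $|F_{x_0}(x) - z_{x_0}|_H \le \phi_{x_0}(x)\,|F(x)-z_{x_0}|_H < 1/4$ for every $x \in H$ (the bound $1/4$ holds on the support of $\phi_{x_0}$, and $F_{x_0}\equiv z_{x_0}$ off it), so $\| F_{x_0} - z_{x_0}\|_0 < 1/4$. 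The family $\{U_{x_0}\}_{x_0 \in H}$ is an open cover of $H$.

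Now let $\L_{x_0}$ be the operator \eqref{ll} with $F$ replaced by $F_{x_0}$, keeping the domain $D(\L) = C^2_{cil}(H)$. For $f \in D(\L)$ and $x \in U_{x_0}$ one has $F_{x_0}(x) = F(x)$, hence $\L_{x_0} f(x) = \L f(x)$, which is condition (ii) of Theorem \ref{uni1}. Since $\| F_{x_0} - z_{x_0}\|_0 < 1/4$, Lemma \ref{loc} applies to \eqref{sde} with drift $F_{x_0}$ and gives uniqueness in law; combined with weak existence (Section 4) and the equivalence of Proposition \ref{ser}, this is exactly the well-posedness of the martingale problem for $\L_{x_0}$, i.e.\ condition (i). Theorem \ref{uni1} then yields well-posedness of the martingale problem for $\L$, and a final application of Proposition \ref{ser} and Theorem \ref{ria} translates this back into uniqueness in law for \eqref{sde}. (The applicability of Theorem \ref{uni1} also rests on $(\L, D(\L))$ being countably pointwise determined, which has already been checked in the remark following \eqref{ll}.)

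The only genuinely delicate point is the construction of the $F_{x_0}$: one must ensure simultaneously that each $F_{x_0}$ is globally continuous and bounded, that it coincides with $F$ on an open neighbourhood of $x_0$, and that its oscillation from the constant $z_{x_0}$ remains below $1/4$ everywhere on $H$. The radial cutoff above achieves all three, the key being that shrinking $r$ forces $F$ to stay within distance $1/4$ of $z_{x_0}$ on the whole support of $\phi_{x_0}$, while outside that support the modified drift is the constant $z_{x_0}$, whose oscillation is zero.
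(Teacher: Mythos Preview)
Your proof is correct and follows essentially the same route as the paper: both verify the existence hypothesis via Section~4 and Proposition~\ref{ser}, construct around each point $x_0$ a radial cutoff that interpolates between $F$ and the constant $F(x_0)$ so that the modified drift satisfies $\|F_{x_0}-F(x_0)\|_0<1/4$, invoke Lemma~\ref{loc} for each localized operator, and conclude by Theorem~\ref{uni1}. The only cosmetic differences are that the paper passes to a countable subcover (unnecessary, since Theorem~\ref{uni1} allows an arbitrary index set) and that your final appeal to Theorem~\ref{ria} is redundant---well-posedness of the martingale problem from Theorem~\ref{uni1} together with Proposition~\ref{ser} already yields uniqueness in law.
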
 
\begin{proof} By Proposition \ref{ser} it is enough to show that 
 the martingale problem for $\L$  is well-posed (cf. \eqref{ll}).  
 By  Section 4,  for any $x \in H$, there exists a solution to the martingale problem for $(\L, \delta_x)$.

 In order to apply Theorem \ref{uni1}  we proceed into  two  steps.
In the first step we construct a suitable covering of $H$;  in the second step we define  suitable operators ${\L}_j$ according to  Lemma \ref{loc} such that 
 the martingale problem associated to each ${\L}_j$ is well-posed.     

 \smallskip
\noindent
{\it I Step.} There exists
     a  countable set of points $(x_j) \subset H$,
$j \ge 1$, and numbers $r_j >0$   with the following properties:

\smallskip
(i) the open balls $U_j= B(x_j, \frac{ r_j}{2})= \{ x \in H \, :\, |x- x_j|_H < r_j/2 \}$ 
  form a covering for $H$;

(ii)   we have:
$
\| F(x) - F(x_j) \| < 1/4, \;\;\;  x \in B(x_j, r_j).
$
\\
%In order to  construct the  covering,
Using   the continuity of $F$: for any $x$ we  find $r(x)>0$ such that 
$$
 \begin{array}{l}
|F(y) - F(x)|_H < 1/4, \;\;\;\;  y \in B(x, r(x)).
 \end{array}
 $$   
 We have a covering  $\{ U_{x}\}_{x \in H}$ with $U_x = B(x, \frac{r(x)}{2})$.
Since $H$ is a separable Hilbert space we can choose a countable subcovering $(U_j)_{j \ge 1}, $ with $U_j = B(x_j, \frac{r(x_j)}{2})$ $= B(x_j, \frac{r_j}{2})$.

\smallskip
\noindent
\textit{II Step.} We construct $\L_j$ in order to apply the localization principle.
  
   Let us consider the previous covering $(B(x_j, r_j/2))$.   We take  $\rho \in C_0^{\infty}(\R_+)$, 
  $0 \le \rho \le 1$, $\rho(s)=1$, $0 \le s  \le 1$, $\rho(s) =0$ for $s \ge 2$.
   Define 
   $$
   \rho_j (x) = \rho \big ( 4\,  r_j^{-2}\, |x- x_j|^2_H \big), \;\; x \in H.     
 $$ 
Now  $\rho_{j} =1$ in $B(x_j, \frac{r_j}{2})$ and  $\rho_j =0$ outside $B(x_j,   
 \, r_j )$. Set
 $
 F_j(x) :=  \rho_{j}(x) F(x)  +  (1 - \rho_{j}(x))F(x_{j} ),$ $ x\in H,$ so that
   \begin{equation*}
 \sup_{x \in H}| F_j(x) - F(x_j) |_H  =  \sup_{x \in B(x_j, r_j)} \, | F(x) - F(x_j) |_H < 1/4 
 \end{equation*}
and  $F_j(x) = F(x)$, $x \in B(x_j, \frac{r_j}{2}) = U_j$. Define  
$D(\L_j) = C^2_{cil}(H)$, $j \ge 1$, and   
$$
\L f_j (x) = \frac{1}{2} Tr(D^2 f(x)) + \langle x, ADf(x) \rangle +
\langle (-A)^{1/2} F_j(x), Df(x) \rangle,\;\; f \in C^2_{cil}(H),\; x \in H.
$$
  We have 
 $\L_j f(x) =  {\L} f(x),\;\; x \in U_j,\;\; f \in C^2_{cil}(H)$ and 
  the martingale problem  for each $\L_j$,
  is well-posed by Lemma  \ref{loc} (with $F= F_j$ an\ $z = F(x_j)$).  By Theorem \ref{uni1} we find the assertion.   
   \end{proof}

\section{Proof of weak  uniqueness of Theorem \ref{base}
%when $F$ has at most   linear growth
}   

% LEMMA ??

%Let us fix the initial condition $x \in H$ in \eqref{sde}.
Here 
%{\it we finish the  proof of the uniqueness part of Theorem \ref{forse},} i.e.,  
we  
prove uniqueness in law for \eqref{sde} assuming that $F: H \to H$ 
 is continuous and has at most linear growth, i.e., it verifies \eqref{lin1}.
 To this purpose  we will use Lemma \ref{loc1} and Theorem  \ref{key}.  
% we will truncate $F$ and 
%   show uniqueness  for the martingale problem up to a stopping time.  

\vskip 1 mm 
Let $X= (X_t)_{t \ge 0}$  be a mild solution of \eqref{sde} starting at $x \in H$ (under the assumption \eqref{lin1}) defined on some filtered probability space $(
\Omega,$ $ {\cal F},
 ({\cal F}_{t}), \P) $ on which it is defined a
 cylindrical ${\cal F}_{t}$-Wiener process $W$; see Section 4. For a cylindrical function $f \in C^2_{cil}(H) $ in general $\L f$ (see \eqref{ll}) is not a bounded function on $H$ because  $F$ can be unbounded. However
 we  know by a finite-dimensional It\^o's formula that 
 \begin{equation} \label{ffu}
  \begin{array}{l}
   M_t(f) =  f(X_t) - \int_0^t \L f(X_s)ds = f(x) + \int_0^t Df(X_s)dW_s 
 \end{array} 
 \end{equation}
 is still   a continuous square integrable ${\cal F}_t$-martingale. Note that we can apply It\^o's formula  because there exists $m \ge 1$ such that  $f(x) = f(\pi_m x)$, $x \in H$, and so  $f(X_t) = f(\pi_m X_t)$ (cf. formula  \eqref{maga}). 
 
\vskip 0.5 mm   
 Now let us consider  $B(0,n) = \{x \in H \, :\, |x|_H <n \}$ and define  {\sl continuous and bounded functions} $F_n : H \to H$ such that
$
F_n (y) = F(y),$ $ y \in B(0,n), 
$ $n \ge 1.  $   

\vskip 0.5 mm 
 To this purpose one can take $\eta \in C_0^{\infty}(\R)$ such that $0 \le \eta(s) \le 1$, $s \in \R$,  $\eta(s)=1$ for $|s| \le 1$ and  $\eta(s) =0 $ for $|s|\ge 2,$ and set
$
F_n(y) = F(y)\,  \eta \big(\frac{ |y|_H}{n}\big), $ $ y \in H.
$
Define  
$$ 
 \begin{array}{l}
\L_n f (y) = \frac{1}{2} Tr(D^2 f(y)) + \langle y, A Df(y) \rangle +
\langle F_n (y),  (-A)^{1/2} Df(y) \rangle, \;\; f \in C^2_{cil}(H), \, y \in H.
\end{array}  
$$   
Let us introduce the exit time   $\tau_n^X = \inf \{ t \ge 0 \, : \, |X_t|_H \ge n \}$ ($\tau_n^X = + \infty$ if the set is empty; cf. \eqref{ta1}) for each $n \ge 1$.
 It is an ${\cal F}_t$-stopping time (cf. Proposition II.1.5 in \cite{EK}).
By the optional sampling theorem (cf. Theorem II.2.13 in \cite{EK})  we know that 
\begin{gather*}
M_{t \wedge \tau_n^X}(f)=  f( X_{t \wedge \tau_n^X}) - \int_0^{t \wedge \tau_n^X} \L f(X_s)ds
= f(X_{t \wedge \tau_n^X}) - \int_0^{t \wedge \tau_n^X} \L_n f(X_{s   \wedge \tau_n^X})ds, \;\; t\ge 0,
\end{gather*}
 is a  martingale with respect to the filtration $({\mathcal F}_{t \wedge \tau_n^X})_{t \ge 0}$; note that the process $(X_{t \wedge \tau_n^X})_{t \ge 0}$ is adapted with respect to $\big ( {\mathcal F}_{t \wedge \tau_n^X} \big )$ (see Proposition II.1.4 in \cite{EK}).

%\smallskip 
Thus  $(X_{ t\wedge \tau_n^X  })_{t \ge 0} $ is a solution to  the  {\sl stopped martingale problem for $(\L_n, \delta_x, B(0,n)$).}
 By Lemma \ref{loc1}   {\sl the martingale problem for each $\L_n$ is well-posed because $F_n \in C_b(H,H)$}.   By Theorem \ref{key}  also the   stopped martingale problem for   $(\L_n, \delta_x, B(0,n)$) is well-posed, $n \ge 1$. 

%\smallskip 
\vskip 0.5 mm
Let  $Y$ be another  mild solution starting at $x \in H$. Then $(Y_{ t\wedge \tau_n^Y  })_{t \ge 0} $ also solves the stopped martingale problem for $(\L_n, \delta_x, B(0,n))$. 
  By weak uniqueness of the stopped martingale problem it follows that, for any $n \ge 1$,  $(X_{ t\wedge \tau_n^X  })_{t \ge 0} $ and $(Y_{ t\wedge \tau_n^Y  })_{t \ge 0} $ have the same law. Now it is not difficult to prove that  $X$ and $Y$ have the same law on ${\cal B}(C([0,\infty);  H))$ and this finishes the proof.   

 { 
\section{An extension to locally bounded functions $F: H\to H$}  
 %By adapting  the previous proof 
 
 Assuming weak existence for \eqref{sde} one can obtain the following extension of  Theorem
  \ref{base}.

  \begin{theorem}
\label{extension} 
 Let us consider \eqref{sde} under Hypothesis \ref{d1} and  fix  $x \in H$. Assume   that 
 
 % \noindent  H1) $F: H \to H$ is continuous and locally bounded (i.e., $F$ is %bounded on bounded sets of $H$);
 
  H1) $F: H \to H$ is continuous and  bounded on bounded sets of $H$;  
 
 \vskip 1mm
 
 H2)  there exists a weak mild solution $(X_t)_{t \ge 0 }$ of \eqref{sde}
 %defined on some filtered probability space and 
 starting at $x \in H$.   
 
\vskip 0,5 mm 
 Under  the previous assumptions weak uniqueness  holds, i.e.,  all  weak mild solutions starting at  $x \in H$ have the same law on ${\cal B}(C([0, \infty); H)$.      
\end{theorem}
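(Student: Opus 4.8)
The plan is to follow closely the truncation and localization scheme already carried out in Section 6 for the proof of weak uniqueness in Theorem \ref{base}, observing that the linear growth assumption \eqref{lin1} entered that proof only through the construction of a weak mild solution in Section 4, which is now supplied directly by hypothesis H2; the uniqueness argument itself uses nothing beyond continuity and local boundedness of $F$. Fix $x \in H$ and let $X$ and $Y$ be any two weak mild solutions starting at $x$ (by H2 the set of such solutions is nonempty). For $n \ge 1$ set $F_n(y) = F(y)\,\eta(|y|_H/n)$, where $\eta \in C_0^{\infty}(\R)$ is the cutoff with $0 \le \eta \le 1$, $\eta = 1$ on $[-1,1]$ and $\eta = 0$ outside $[-2,2]$. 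Since the support of $y \mapsto \eta(|y|_H/n)$ lies in $\{|y|_H \le 2n\}$ and $F$ is bounded on bounded sets by H1, we get $\sup_{y \in H}|F_n(y)|_H = \sup_{|y|_H \le 2n}|F(y)|_H < \infty$; with the continuity of $F$ this gives $F_n \in C_b(H,H)$. We write $\L_n$ for the operator \eqref{ll} associated with $F_n$.

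The only point requiring care, now that $F$ may be unbounded, is that the martingale identity \eqref{ffu} and the stopped martingale problem still make sense. For $f \in C^2_{cil}(H)$ the derivative $Df$ is bounded with compact support, so $\int_0^t Df(X_s)\,dW_s$ is a genuine square integrable martingale; moreover, since $X$ has continuous $H$-valued paths the image $\{X_s : s \le t\}$ is compact, whence $\sup_{s \le t}|F(X_s)|_H < \infty$ $\P$-a.s.\ by continuity of $F$, and the drift $\int_0^t \L f(X_s)\,ds$ is finite a.s. The finite-dimensional It\^o formula applied to $f(\pi_m X_t) = f(X_t)$ (cf.\ \eqref{maga}) therefore yields exactly \eqref{ffu}, so $M_t(f)$ is a continuous square integrable $\mathcal F_t$-martingale. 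Introducing the exit times $\tau_n^X = \inf\{t \ge 0 : |X_t|_H \ge n\}$ and $\tau_n^Y = \inf\{t \ge 0 : |Y_t|_H \ge n\}$ (cf.\ \eqref{ta1}) and applying the optional sampling theorem exactly as in Section 6, we obtain that $(X_{t \wedge \tau_n^X})_{t \ge 0}$ and $(Y_{t \wedge \tau_n^Y})_{t \ge 0}$ both solve the stopped martingale problem for $(\L_n, \delta_x, B(0,n))$, using here that $F = F_n$ on $B(0,n)$.

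Since $F_n \in C_b(H,H)$, Lemma \ref{loc1} gives well-posedness of the global martingale problem for $\L_n$, and Theorem \ref{key} then yields well-posedness of the stopped martingale problem for $(\L_n, \delta_x, B(0,n))$. Consequently $(X_{t \wedge \tau_n^X})_{t \ge 0}$ and $(Y_{t \wedge \tau_n^Y})_{t \ge 0}$ have the same law for every $n \ge 1$. Because $X$ and $Y$ have continuous paths, $\tau_n^X \to \infty$ and $\tau_n^Y \to \infty$ $\P$-a.s., so letting $n \to \infty$ we conclude that $X$ and $Y$ have the same law on ${\cal B}(C([0,\infty);H))$, which is the assertion. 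The main obstacle is precisely the verification in the second paragraph: with $F$ only locally bounded the generator $\L f$ need not be globally bounded, so the martingale property of $M_t(f)$ cannot be read off directly and must instead be recovered from the stochastic-integral representation \eqref{ffu} together with the path-continuity bound on $|F(X_s)|_H$; once this is in place, the localization machinery of Section 6 applies unchanged.
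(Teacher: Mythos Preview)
Your proof is correct and follows essentially the same approach as the paper's: truncate $F$ via $F_n(y)=F(y)\eta(|y|_H/n)$, use the It\^o representation \eqref{ffu} to see that $M_t(f)$ is a square-integrable martingale, pass to the stopped martingale problem for $(\L_n,\delta_x,B(0,n))$ by optional sampling, invoke Lemma~\ref{loc1} and Theorem~\ref{key} for its well-posedness, and let $n\to\infty$. One minor wording point: $Df$ is bounded but does not have compact support in $H$ (its support is a cylinder over the compact support of $\tilde f$); only boundedness is needed, and you use only that.
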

  \begin{proof} The proof is similar to the one of Section 6.
  %We follow the arguments of Section 6 with the same notations.  
 % Theorem \ref{key}.
   We give
 %  give a sketch of proof 
    some details  
   for the sake of completeness.  
   Let $X= (X_t)_{t \ge 0}$  be a mild solution of \eqref{sde} starting at $x \in H$ (defined on some filtered probability space $(
\Omega,$ $ {\cal F},
 ({\cal F}_{t}), \P) $).
  %on which it is defined a
 % cylindrical ${\cal F}_{t}$-Wiener process $W$). 
  %Following the notation of Section 6,
 We   have that $M_t(f)$ in \eqref{ffu} is   a continuous square integrable ${\cal F}_t$-martingale, for any 
   $f \in C^2_{cil}(H)$.
    %Considering balls  $B(0,n) \subset H$, $n \ge 1$,  and 
    Using  the continuity and the local boundedness of $F$, we obtain that  the functions $
F_n(y) = F(y)\,  \eta \big(\frac{ |y|_H}{n}\big), $ $ y \in H,
$ are continuous and bounded from $H$ into $H.$
 
 By the optional sampling theorem 
  we find that  $(X_{ t\wedge \tau_n^X  })_{t \ge 0} $ is a solution to  the  { stopped martingale problem for $(\L_n, \delta_x, B(0,n)$).}
  Using Lemma \ref{loc1} and Theorem  \ref{key}     we know that  the   stopped martingale problem for   $(\L_n, \delta_x, B(0,n)$) is well-posed, $n \ge 1$.  Proceeding as in the final part  of Section 6 we obtain the assertion.
 % uniqueness result. 
\end{proof}

\subsection{Singular perturbations of classical stochastic Burgers equations}  

Here we show that  Theorem \ref{extension} can be applied to  SPDEs \eqref{sde} in cases when  $F$ grows more than linearly. As an example we consider   
  \begin{gather} \label{bur}
d u (t, \xi)=   \frac{\partial^2}{\partial   \xi^2}  u(t, \xi)dt +   \, { h( }u(t, \xi) )\cdot  g \big (\,  \big |  u(t, \cdot ) \big |_{H^1_0 } \big) dt     
\nonumber 
+ \frac{1}{2} \frac{\partial }{\partial \xi} \big ( u^2(t, \xi) \big ) dt +
\, \sum_{k \ge 1 }  
\frac{1}{k}   dW_t^{(k)}  {e_k(\xi)},  
% d W_t(\xi), 
\\  \;\; u(0, \xi) = u_0(\xi), \;\;\; \xi \in (0,\pi),
\end{gather}
 $u(t,0) = u(t,\pi)=0$, $t >0$, $u_0 \in H^1_0(0,\pi)$;  {\sl $g: \R \to \R $ is continuous and $h : \R \to \R$ is  a $C^1$-function. Moreover to get existence of solutions we require  } 
 %the following bound on $g$ and the derivative $h'$:   } 
 %and such that there exists $C>0$ such that $|g(r)| \le C (1+ |r|)$, $r \in %\R$ 
 %(we have fixed $T>0$).
 \begin{equation}\label{er3}
 \sup_{s \in \R} |h'(s)| \, \cdot  \sup_{s \in \R} |g(s) | \le 1. 
\end{equation}
For instance, we can consider $  h( u(t, \xi) )\cdot  g \big (\,  \big |  u(t, \cdot ) \big |_{H^1_0 } \big) = u(t, \xi) \cdot  $ $\big(\sqrt{ |  u(t, \cdot ) 
|_{H^1_0 } } \,\,\,  \wedge 1 \big)$.

  Recall that $
 e_k (\xi) = \sqrt{2/\pi} \,  \sin (k \xi), $ $ \xi \in [0, \pi],\;\; k \ge 1
 $ (cf. Section 2; note that in \eqref{bur} the  noise  is ``more regular'' than  the one in \eqref{bur0}).

\vskip 1mm

 We first  establish existence of  mild solutions with values in $H^1_0(0,\pi)$ when $g=0$ (see Proposition \ref{dap}).  This is needed in other to show that the classical Burgers equation can be considered in the form \eqref{sde}
  with  a suitable $F = F_0 : H^1_0(0,\pi) \to H^1_0(0,\pi) $ continuous and locally bounded (see  \eqref{mq144}).
  To this purpose we follow the approach in  Chapter 14 of \cite{ergodicity}.
  %Note that the noise will be ``colored noise'' in $L^2(0,%%\pi)$, cf. ??? dove (in contrast with the example in %% %\eqref{bur})
 
 Then to get  well-posedness of \eqref{bur} (see Proposition \ref{well1}) we will apply the Girsanov theorem using   an exponential estimate proved in \cite{burgers}. Such  Girsanov theorem  provides existence of weak solutions (cf. Remark \ref{che}). Uniqueness in law is  obtained direclty using Theorem  \ref{extension}.

 \vskip 1mm 
 We need to review    basic facts about fractional powers of the  
operator  
$A = \frac{d^2}{d  \xi^2}$ with Dirichlet boundary conditions, i.e., $D(A) = H^2(0, \pi) \cap H^1_0 (0, \pi)$ (cf. Section 2).  The
eigenfunctions are
 $
 e_k (\xi),$ $  k \ge 1,
 $ with 
  eigenvalues $- k^2$ (we set
  $\lambda_k =  - k^2 $). 
 For $v \in L^2(0, \pi)$ we write $v_k = \langle v,  e_k\rangle$ $= \int_0^{\pi} v(x) e_k (x) dx $, $k \ge 1$.

We introduce  for $ s >0 $  the Hilbert spaces 
  \begin{equation}\label{s335}
   \begin{array}{l}
{\mathcal H}_{s} = D((-A)^s) = \big \{ u \in L^2(0, \pi) \, : \, \sum_{k \ge 1} \lambda_k^{2s} \,  u_k^2 =\sum_{k \ge 1} k^{4s} \,  u_k^2  < \infty \big \}.
%;\;\;\; {\mathcal H}_0 = L^2(0, \pi).
\end{array} 
\end{equation}
Moreover, for any $u \in {\mathcal H}_s$, $(-A)^s u = $ $
 \sum_{k \ge 1} k^{2s} \,  u_k e_k  $. 
 He also set ${\mathcal H}_0 = L^2(0, \pi)$.
  We have $\langle  u,v \rangle_{{\mathcal H}_s} = \sum_{k \ge 1} k^{4s} \,  u_k v_k$  (note that $|u|_{L^2} \le |(-A)^s u|_{L^2}= |u|_{{\mathcal H}_s}$, $u \in {\mathcal H}_s $, $s >0$).  

 \vskip 1mm  If $u \in H^1_0 (0,\pi)   $,  $|u|_{H^1_0(0,\pi) } = |u'|_{L^2(0,\pi) }$, where $u' $ is  the weak derivative of $u$.   We have   
\begin{gather}\label{h1o}  
{\mathcal H}_{1/2} =  H^1_0(0,\pi) \;\; \text{with equivalence of norms;  }
\\ 
%   
 % ${\mathcal H}_{1/2} =  H^1_0(0,\pi)$ 
%(where $H^1_0$ is the Sobolev space of all absolutely %continuous functions with vanishes at $0$ and $\pi$ and %having square integral derivative)  
%with equivalence of norms. We also have the  embedding  
\label{sob1}   
{\mathcal H}_{1/8} \subset L^4(0,\pi)
\end{gather}
(with continuous inclusion, i.e., there exists $C>0$ such that $|u|_{L^4} \le C |u|_{{\mathcal H}_{1/8}}$, $u \in {\mathcal H}_{1/8}$). 
 Assertion  \eqref{sob1} follows by a classical 
 Sobolev embedding theorem (cf. Theorem 6.16 and Remark 6.17 in \cite{hairer}). We only note that if $u \in {\mathcal H}_{1/8}$ one  can consider the odd extension $\tilde u$ of $u$ to $(- \pi , \pi)$; it is easy to check that $\tilde u$ belongs to the space $ H^{1/4}(-\pi, \pi) $ considered in \cite{hairer}. 
 %?? $\tilde u = \sum_{k \in {\mathbb Z} } \tilde u_k  e^{i %k x}$  with  $\sum_{k \in {\mathbb Z} }
% |k|^{1/2} \, |\tilde u_k|^2 < \infty$. 
%We only  make a comment. If $u \in {\mathcal H}_{1/8}$ we can consider the odd extension $\tilde u$ of $u$ to $(- \pi , \pi)$. It is easy to check that ?? $\tilde u = \sum_{k \in {\mathbb Z} } \tilde u_k  e^{i k x}$  with  $\sum_{k \in {\mathbb Z} }
% |k|^{1/2} \, |\tilde u_k|^2 < \infty$. ??? Applying a classical Sobolev embedding (see, for instance Kress  ???  we obtain that $\tilde u \in L^4(- \pi, \pi)$ and so  
%  $u \in  L^4(0, \pi)$.  
 %assertion follows easily. 
 
 \vskip 1mm 
 We also have with  continuous inclusion (cf. Lemma 6.13 in \cite{hairer})
 \begin{equation}\label{sob2}
{\mathcal H}_{s} \subset \{ u \in C([0, \pi]),\; u(0)=u(\pi)=0\}, \;\;\; s > 1/4.
\end{equation}
%Hence when $s \in   s \in (1/4, 1)   $, ${\mathcal H}_s$ can be %identified with the space $d$  
%To this purpose let $u \in {\mathcal H}_{s}$. We write 
%$\sum_{k \ge 1}|u_k| = $ $\sum_{k \ge 1} |u_k| k^{2s} \, %k^{-2s} $. Applying the Cauchy-Schwartz inequality we find %that  $\sum_{k \ge 1}|u_k| < \infty $ and so $|u|_{\infty} %\le C |u|_{{\mathcal H}_{s}}$.
Now let us consider the linear bounded operator $T : {\mathcal H}_{1/2} \to {\mathcal H}_{1/2}$, $T u = (-A)^{-1/2} \partial_{\xi} u $, $u \in {\mathcal H}_{1/2}$;  $T$ can be extended to a linear and bounded operator   $T : {\mathcal H}_0 = L^2(0, \pi)\to {\mathcal H}_0$, see  Section 2.0.1. 
By interpolation  it follows that 
\begin{equation}\label{inter}
 T =(-A)^{-1/2} \partial_{\xi} \;\; \text{is bounded linear operator from    $\; {\mathcal H}_{s}$ into $ {\mathcal H}_s$, $s \in [0,1/2]$}.
\end{equation}
Indeed by Theorem 4.36 in \cite{interpola} we know that 
  ${\mathcal H}_{s/2} $ can be identified with the real interpolation space $({\mathcal H}_0, {\mathcal H}_{1/2})_{s,2}$, $s \in (0, 1)$. Applying Theorem 1.6 in \cite{interpola}   we deduce \eqref{inter}.   

\vskip 1mm Let $T>0$.  For $g \in C([0,T]; {\mathcal H}_0)$ we define $(Sg)(t) =  \int_{0}^{t} e^{(  t-s)  A} g  (s) ds$, $t \in [0,T]$. 
%It is easy to
One can prove that 
 $Sg \in C([0,T]; {\mathcal H}_s)$, for any  $s \in [0,1)$. More precisely, 
\begin{equation}\label{bou1}
S \; \text{is a bounded linear operator from $C([0,T]; {\mathcal H}_0)$ into $C([0,T]; {\mathcal H}_s)$, $s \in [0,1)$.}
\end{equation}
 This result can be also deduced from Proposition 5.9 in \cite{DZ} with $\alpha =1$, $E_1 = {\mathcal H}_s$ and $E_2 = {\mathcal H}_0$. We only remark that, for any $p>1$, $L^p (0,T; {\mathcal H}_0) \subset C([0,T]; {\mathcal H}_0)$ (with continuous inclusion) and  $|(-A)^{s} e^{tA} x|_{{\mathcal H}_0}$ $=| e^{tA} x|_{{\mathcal H}_s} \le \frac{C}{t^{s}} |x|_{{\mathcal H}_0}$ (see Proposition 4.37 in \cite{hairer}).

\vskip 1mm      
In the next proposition, assertion (i) extends  a result   of \cite{ergodicity} which actually shows the existence of a mild solution to the stochastic Burgers equations with continuous path in ${\mathcal H}_s$, $s \in (0,1/4)$.        
 Assertion (ii) is proved in \cite{burgers}.
 \begin{proposition} \label{dap}
 Let us consider \eqref{bur} with $g=0$.  Then the following assertions hold:
 
 i) for any $u_0 \in {\mathcal H}_{1/2}$ there exists a pathwise unique mild solution $Y = (Y_t) = (Y_t)_{t \ge 0 }$ with continuous paths in ${\mathcal H}_{1/2}$. 
 
 ii)  The following estimate holds, for any $T>0,$
 \begin{equation}\label{esp}
 \E \Big [ \exp \Big ( {\frac{1}{2} \int_0^T \big |Y_s \big |_{{\mathcal H}_{1/2}}^2 ds} \Big )\Big]  < \infty .
\end{equation}
 \end{proposition}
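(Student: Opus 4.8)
The plan is to reduce \eqref{bur} with $g=0$ to the classical stochastic Burgers equation in mild form and to obtain its $\mathcal{H}_{1/2}$-valued solution by \emph{bootstrapping} the low-regularity solution already available in the literature. Writing $F_0(u)=\frac12 T(u^2)$ with $T=(-A)^{-1/2}\partial_\xi$ (cf. \eqref{inter}), the equation reads $Y_t=e^{tA}u_0+\int_0^t(-A)^{1/2}e^{(t-s)A}F_0(Y_s)\,ds+W_A(t)$, where $W_A(t)=\sum_{k\ge1}\frac1k\int_0^te^{-(t-s)k^2}\,dW^{(k)}_s\,e_k$ is the stochastic convolution; note $\frac12\partial_\xi(u^2)=(-A)^{1/2}F_0(u)$, so this is of the form \eqref{mqq} with $F=F_0$. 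For assertion (ii) I would simply invoke \cite{burgers}, where the exponential estimate \eqref{esp} is proved; thus the work is concentrated in (i).

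For (i), first I would record the two ``free'' terms. A direct computation gives $\mathbb E|W_A(t)|_{\mathcal{H}_\beta}^2=\frac12\sum_{k\ge1}k^{4\beta-4}(1-e^{-2tk^2})$, which is finite for every $\beta<3/4$; together with the factorization method (cf. Chapter~5 of \cite{DZ}) this yields a version of $W_A$ with continuous paths in $\mathcal{H}_{1/2}$. Since $u_0\in\mathcal{H}_{1/2}=D((-A)^{1/2})$, the term $t\mapsto e^{tA}u_0$ also lies in $C([0,T];\mathcal{H}_{1/2})$ (strong continuity of $(e^{tA})$ on $\mathcal{H}_{1/2}$). By the results of \cite{ergodicity} (Chapter~14), for each $u_0\in\mathcal{H}_{1/2}\subset\mathcal{H}_{s_0}$ there is a global mild solution $Y$ with continuous paths in $\mathcal{H}_{s_0}$, $s_0\in(0,1/4)$, and pathwise uniqueness holds: setting $v=Y-W_A$ reduces the equation to a pathwise (deterministic) integral equation with locally Lipschitz, quadratic nonlinearity. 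I would fix $s_0\in[1/8,1/4)$; since any $\mathcal{H}_{1/2}$-valued mild solution is in particular $\mathcal{H}_{s_0}$-valued, both existence of at most one solution and its uniqueness are inherited.

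It remains to upgrade the regularity of the nonlinear convolution $\mathcal N(Y)(t)=\int_0^t(-A)^{1/2}e^{(t-s)A}F_0(Y_s)\,ds$ from $\mathcal{H}_{s_0}$ to $\mathcal{H}_{1/2}$, and this is the main obstacle: the naive bound places all the smoothing on the semigroup and produces the non-integrable singularity $(t-s)^{-1}$ at the endpoint. I would proceed in two steps. Using the embedding \eqref{sob1} and $T:\mathcal{H}_0\to\mathcal{H}_0$, from $Y\in C([0,T];\mathcal{H}_{s_0})$ with $s_0\ge1/8$ one gets $|\mathcal N(Y)(t)|_{\mathcal{H}_\gamma}\le C\int_0^t(t-s)^{-(\gamma+1/2)}|Y_s|_{L^4}^2\,ds\le C'\,t^{1/2-\gamma}\sup_s|Y_s|_{\mathcal{H}_{s_0}}^2$ for every $\gamma<1/2$, whence $Y\in C([0,T];\mathcal{H}_\gamma)$ for any $\gamma\in(1/4,1/2)$. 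Fixing such a $\gamma$, I would then exploit that $\mathcal{H}_\gamma$ is a multiplicative algebra (since $2\gamma>1/2$) compatible with the Dirichlet condition $u(0)=u(\pi)=0$, so that $|Y_s^2|_{\mathcal{H}_\gamma}\le C|Y_s|_{\mathcal{H}_\gamma}^2$; combining this with $T:\mathcal{H}_\gamma\to\mathcal{H}_\gamma$ from \eqref{inter} and the factorization $(-A)e^{(t-s)A}=(-A)^{1-\gamma}e^{(t-s)A}(-A)^\gamma$ gives $|\mathcal N(Y)(t)|_{\mathcal{H}_{1/2}}\le C\int_0^t(t-s)^{-(1-\gamma)}|Y_s|_{\mathcal{H}_\gamma}^2\,ds$, now integrable since $1-\gamma<3/4$.

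Finally, a standard dominated-convergence argument applied to these same convolution integrals yields continuity of $t\mapsto\mathcal N(Y)(t)$ with values in $\mathcal{H}_{1/2}$, so that $Y=e^{\cdot A}u_0+\mathcal N(Y)+W_A\in C([0,T];\mathcal{H}_{1/2})$ for every $T>0$. This proves (i), the pathwise uniqueness being already secured by the reduction to the deterministic equation for $v$. The crux of the whole argument is the second bootstrap step, where keeping $T(Y^2)\in\mathcal{H}_\gamma$ (via the algebra property valid only for $\gamma>1/4$) is exactly what allows one to borrow the missing $1-\gamma<3/4$ derivatives from $(-A)^{1-\gamma}e^{(t-s)A}$ while retaining an integrable kernel.
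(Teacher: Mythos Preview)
Your proposal is correct and follows essentially the same route as the paper: you invoke the low-regularity solution from \cite{ergodicity}, then perform the identical two-step bootstrap (first $\mathcal H_{1/8}\hookrightarrow L^4$ to reach $\mathcal H_\gamma$ with $\gamma\in(1/4,1/2)$, then the algebra property of $\mathcal H_\gamma$ together with $T:\mathcal H_\gamma\to\mathcal H_\gamma$ to reach $\mathcal H_{1/2}$), and you cite \cite{burgers} for (ii). The only cosmetic differences are that the paper phrases the convolution bounds via the operator $S$ and \eqref{bou1} rather than direct kernel estimates, and it spells out the algebra property of $\mathcal H_\gamma$ through the Sobolev--Slobodeckij characterization \cite{grisvard}, which you take as known.
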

\begin{proof} 
 According to \cite{ergodicity} and  \cite{burgers}, setting   $u(t, \cdot ) = Y_t$ we write \eqref{bur}  with $g=0$ as 
\begin{equation}\label{mil3}
 \begin{array}{l}
  Y_t = e^{tA} u_0 \, + \, \frac{1}{2}\int_{0}^{t} e^{(  t-s)  A} \, \partial_{\xi}  (
Y_{s}^2)
ds
+\int_{0}^{t}e^{(  t-s)  A} \sqrt{C} dW_{s},\;\; t \ge 0,
\end{array} 
\end{equation}
where $W_t = \sum_{k \ge 1 }  
   W_t^{(k)} e_k $ is a cylindrical Wiener process on ${\mathcal H}_0 = L^2(0,\pi)$ and 
   $C= (-A)^{-1}: $ ${\mathcal H}_0 \to {\mathcal H}_0 $ is  symmetric, non-negative
and of trace class,
%with eigenfunctions $(e_k)$, 
 $C e_k = \frac{1}{k^2} e_k$, $k \ge 1$.  
  
 \vskip 1mm 
\noindent {\bf (i)} In Theorem 14.2.4 of \cite{ergodicity} (see also the references therein) it is proved that, for any $T>0,$ there exists a pathwise unique solution $Y$ to \eqref{mil3} on $[0,T]$ such that, $\P$-a.s., $Y \in C([0,T]; {\mathcal H}_0) \cap 
L^2(0,T; {\mathcal H}_{1/2})$ (i.e., $\P$-a.s, the paths of $Y$ are continuous  with values in ${\mathcal H}_0$ and square-integrable with values in ${\mathcal H}_{1/2}$); 
such  result  holds even  if we replace $C $ by identity $I$. 
 By a standard argument based on the pathwise uniqueness, we get a solution $Y$ defined on $[0, \infty)$ which verifies  $Y \in C([0,\infty); {\mathcal H}_0) \cap 
L^2_{loc}(0, \infty; {\mathcal H}_{1/2})$, $\P$-a.s.. 

Let us fix $T>0$. To prove our assertion, we will show that
\begin{equation}\label{s119}
 Y \in C([0,T]; {\mathcal H}_{1/2}), \;\;\; \P\text{-a.s.}.
\end{equation}
Note that the stochastic convolution $W_A(t)= \int_{0}^{t}e^{\left(  t-s\right)  A} \sqrt{C} dW_{s}$ has a modification with continuous paths with values in ${\mathcal H}_{1/2}$
 %% correzione  
(to this purpose one can use Theorem 5.11 in \cite{DZ}).  
  Moreover in  Lemma 14.2.1 of \cite{DZ1} it is proved that the operator $R$,
\begin{equation}\label{dz1}
\begin{array}{l}
(Rv)(t) = \int_{0}^{t} e^{(  t-s)  A} \, \partial_{\xi}  
v(s)ds ,\;\;\; t \in [0,T],\;   v \in C([0,T]; {\mathcal H}_{1/2}),
\end{array} 
\end{equation}
can be extended to a linear and bounded operator from $C([0,T]; L^1(0, \pi))$ 
 into $C([0,T]; {\mathcal H}_s)$, $s \in (0, 1/4)$. Since the mapping: $ h \mapsto h^2$ is continuous from $C([0,T]; {\mathcal H}_0)$ into $C([0,T]; L^1(0, \pi))$, we obtain that 
\begin{gather*}
 \begin{array}{l}
u \mapsto R (u^2) \;\; \text{is continuous from $C([0,T]; {\mathcal H}_0)$ into $C([0,T]; {\mathcal H}_s)$.}
 \end{array}
\end{gather*} 
We deduce from  Lemma 14.2.1 of \cite{DZ1} that the solution $Y \in C([0,T]; {\mathcal H}_{s}),   \P$-a.s., $s \in (0,1/4)$. To get more spatial regularity for $Y$  we proceed in two steps.

\vskip 1mm 
\noindent {\it I Step.}  We show that, $ \P$-a.s,  $Y \in C([0,T]; {\mathcal H}_{s})$, $s \in (0,1/2)$.

 Let us fix $s = 1/8$.  By \eqref{sob1} we know that the mapping: $ h \mapsto h^2$ is continuous from $C([0,T]; {\mathcal H}_{1/8})$ into $C([0,T]; {\mathcal H}_0)$. Moreover, using \eqref{inter} we can write, for $w \in C([0,T]; {\mathcal H}_0), $
\begin{gather*}
(Rw)(t) = \int_{0}^{t} e^{(  t-s)  A} \, \partial_{\xi}  
w(s) = \int_{0}^{t} e^{(  t-s)  A} \, (-A)^{1/2}  \, [(-A)^{-1/2} \partial_{\xi}]  
w(s) 
ds, \; t \in [0,T]. 
\end{gather*}
Note that    $[(-A)^{-1/2} \partial_{\xi}]  
w  \in C([0,T]; {\mathcal H}_0)$. 
 By \eqref{bou1} we know that, for any $\epsilon \in (0,1)$,  
 $$ t \mapsto (-A)^{1- \epsilon}\int_{0}^{t} e^{(  t-s)  A}  \, [(-A)^{-1/2} \partial_{\xi}]  
w(s)  ds$$ belongs to $C([0,T]; {\mathcal H}_0)$. Hence 
$$
(-A)^{s} Rw \in C([0,T]; {\mathcal H}_0),\;\; s \in (0, 1/2),\; i.e.,\;  
Rw \in C([0,T]; {\mathcal H}_s), \;\; s \in (0, 1/2).
$$ 
%the mapping:
%$$
%\displaystyle{ 
%  (Rw)(t) = \int_{0}^{t} e^{(  t-s)  A} \, (-A)^{1/2}  \, [(-A)^{-1/2} %\partial_{\xi}] w(s) 
%ds \; \; \text{belongs to } \, C([0,T]; {\mathcal H}_s),\; s \in (0, 1/2). 
%}    
%$$
  Using this fact we easily  obtain  that,
  $ \P$-a.s,  $Y \in C([0,T]; {\mathcal H}_{s})$, $s \in (0,1/2)$.
 
\vskip 1mm 

\noindent  {\it II Step.}  We show that $Y \in C([0,T]; {\mathcal H}_{1/2}),  $ $\P$-a.s..  
 
\vskip 1mm 
 Let us fix $s \in (1/4, 1/2)$ and  recall \eqref{sob2}. According to \cite{grisvard} the space  ${\mathcal H}_s$ can be identified with $\{ u \in W^{2s,2}(0, \pi) \, :\,  
 u(0)=u(\pi)=0\}$, where 
 % the Sobolev-Slobodeckij  space $W^{s,2}(0, \pi)$ is defined as follows
\begin{equation*}\label{gris}
 \begin{array}{l}
W^{2s,2}(0, \pi) = \big \{ u \in {\mathcal H}_0 \, :\, [u]_{W^{2s,2}(0, \pi)}^2 
= \int_{0}^{\pi}  \int_{0}^{\pi}  |u(x) - u(y)|^2 \, |x-y|^{-1 - 4s} \, dx dy < \infty  \big \}
\end{array} 
\end{equation*}
 is  
 a Sobolev-Slobodeckij  space; the norm $|u|_{W^{2s,2}(0, \pi)} = |u|_{{\mathcal H}_0} $ $+ [u]_{W^{2s,2}(0, \pi)}$  is equivalent to $|u|_{{\mathcal H}_s}$
 (see also Theorem 3.2.3 in \cite{analityc}, taking into account that 
 ${\mathcal H}_s $ can be identified with the real interpolation space $({\mathcal H}_0, D(A) )_{s,2} $     by Theorem 4.36 in \cite{interpola}). 
 
 Using the previous characterization and \eqref{sob2} it is easy to prove that if  $u \in {\mathcal H}_s$ and $v \in {\mathcal H}_s$ then the pointwise product $u v \in {\mathcal H}_s $. Indeed we have 
 % if  $u \in W^{2s,2}(0, \pi)$ then $u^2 \in W^{2s,2}(0, \pi) $. 
  % Indeed since $W^{2s,2}(0, \pi) \subset C([0, \pi])$ and so 
  $$
   |u(x) v (x) - u(y) v (y)| \le \| u\|_{0} \, |v(x) - v(y)|
   +  \| v\|_{0} \, |u(x) - u(y)|, \;\; x,y \in [0, \pi],
 $$
 and so  $[u v]_{W^{2s,2}(0, \pi)} \le c |u|_{W^{2s,2}(0, \pi)}
  \, |v|_{W^{2s,2}(0, \pi)}
  \le c' |u|_{{\mathcal H}_s}\, |v|_{{\mathcal H}_s}$. It follows that 
   $|uv|_{{\mathcal H}_s} \le C |u|_{{\mathcal H}_s}\, |v|_{{\mathcal H}_s}$. 
 % In particular we have   $ |u^2|_{{\mathcal H}_s} \le C |u|^2_{{\mathcal H}_s}$.
   
 Let now $u  \in C([0,T]; {\mathcal H}_{s})$.  Using that $|u^{2}(t) - u^2(r)|_{{\mathcal H}_s} \le |u(t) - u(r)|_{{\mathcal H}_s} 
  |u^{}(t) + u(r)|_{{\mathcal H}_s} $ $\le  $ $2 |u|_{C([0,T]; {\mathcal H}_{s})} |u(t) - u(r)|_{{\mathcal H}_s}$, $t, r \in [0,T]$, we see that 
    the mapping: 
\begin{equation}
   u \mapsto u^2 \;\; \text{  is continuous from $C([0,T]; {\mathcal H}_{s})$ into $C([0,T]; {\mathcal H}_{s})$}.
\end{equation}
 Hence, 
taking into account I Step, to get the assertion it is enough to prove that 
\begin{gather} \label{da3}
R \eta \in C([0,T]; {\mathcal H}_{1/2}) \;\; \text{if} \; \eta \in C([0,T]; {\mathcal H}_{s}) , \;\; s \in (1/4, 1/2).
\end{gather}
This would imply   $R (\eta^2)  \in C([0,T]; {\mathcal H}_{1/2}) $  $\text{if} \; \eta \in C([0,T]; {\mathcal H}_{s})$ and so  $Y \in C([0,T]; {\mathcal H}_{1/2}),  $ $\P$-a.s..  
   Let us fix $\eta \in C([0,T]; {\mathcal H}_{s}).$
 Using \eqref{inter} we can write 
\begin{gather*}
(R\eta)(t) = \int_{0}^{t} e^{(  t-s)  A} \, \partial_{\xi}  
\eta (s) = \int_{0}^{t} e^{(  t-s)  A} \, (-A)^{1/2}  \, [(-A)^{-1/2} \partial_{\xi}]  
\eta (s) 
ds, \; t \in [0,T], 
\end{gather*}
 where $ [(-A)^{-1/2} \partial_{\xi}]  
\eta   \in C([0,T]; {\mathcal H}_s)$. Hence $
\theta(r)=  (-A)^s [(-A)^{-1/2} \partial_{\xi}]  
\eta(r) \in C([0,T]; {\mathcal H}_0)$. Writing
$$
(R\eta)(t) = \int_{0}^{t} e^{(  t-r)  A} \, (-A)^{1/2 -s}  \, \theta (r) dr, \;\; t \in [0,T],
$$
 and using 
 \eqref{bou1}, we find that $(-A)^{1/2} R \eta \in C([0,T]; {\mathcal H}_0)$
and this shows \eqref{da3}.
 
\vskip 2 mm \noindent {\bf (ii)  } A similar  estimate is proved in  Propositions 2.2 and 2.3 in \cite{burgers}. However in \cite{burgers} equation \eqref{mil3} is considered in $L^2(0,1)$ (instead of $ L^2(0, \pi)$); the authors prove 
that $\E \big [ e^{\epsilon  \int_0^T  |Y_s  |_{H^1_0(0,1)}^2 \,  ds}\, \big] $ $ < \infty$ if $\epsilon \le \epsilon_0 = \pi^2 / 2 \|C \|$ (using the operator   norm  $\|C \|$ of $C$).

The condition $\epsilon \le \epsilon_0$  is used in the proof of Proposition 2.2 in order to get the inequality  $- |x|^2_{H^1_0} + 2 \epsilon |\sqrt{C} x|_{L^2}^2 \le 0$, $x \in H^1_0$. In our case   $\epsilon_0 =1/2$ since $\| C\|=1$.
 % The proof is complete.  
 \end{proof}
In the remaining part   we consider  
$$
\H = H^1_0 (0, \pi) = {\mathcal H}_{1/2}
$$  
as the reference Hilbert space and  study the SPDE \eqref{bur} 
 in $\H$.  
 
 We will  consider  the following restriction of $A:$ 
 \begin{equation} \label{aa2}
  \A = \frac{d^2}{d  \xi^2} \;\; \text{ with   $D(\A) = \big \{ u \in H^3(0, \pi) \; : \; u, \frac{d^2 u}{d  \xi^2}  \in  H^1_0 (0, \pi) \big \}$;} \;\;\; \A : D(\A) \subset \H \to \H.  
\end{equation}
Eigenfunctions of $\A$ are $\tilde e_k(\xi)= \sqrt{2/\pi} \, \frac{1}{k}  \sin (k \xi) =  
 \frac{1}{k} e_k(\xi)
$ with  eigenvalues  $- k^2$, $k \ge 1$.   

It is clear that $\A$ verifies Hypothesis  \ref{d1} when  $H = \H$. Moreover
 $(\frac{e_k}{k}) = (\tilde e_k)$ forms  an orthonormal basis in $\H$. 
 The noise in \eqref{bur} will be indicated by $\W$; it  is a 
   cylindrical Wiener process on $\H$:
   \begin{equation}\label{noise}
   \begin{array}{l} 
\W_t(\xi) = \sum_{k \ge 1 }  \frac{1}{k}
   W_t^{(k)}  {e_k(\xi)} = \sum_{k \ge 1 }   
   W_t^{(k)}  {\tilde e_k(\xi)},\;\;\; t \ge 0,\; \xi \in [0,\pi]. 
\end{array} 
\end{equation} 
Let $D_0$ be the space of infinitely differentiable functions vanishing in a 
neighborhood of $0$ and $\pi$. Such functions are dense in $\H$. The operator
\begin{equation}\label{s33}
(-\A )^{1/2} \partial_{\xi} : D_0 \to \H \; \text{can be extended to a bounded linear operator from $\H$ into $\H$}. 
\end{equation}
To check this fact we consider $y \in D_0$ and $x \in \H$. Define $x_N = \sum_{k = 1 }^N   
   x_k  {\tilde e_k}$, with $x_k = \langle x, \tilde e_k \rangle_{\H}$, $N\ge 1$.
  Using that $(- \A )^{1/2}$ is self-adjoint on $\H$ and integrating by parts we find 
 %(we use inner product in $ \H = H_0^1(0, \pi)$ and )
\begin{gather*} 
\begin{array}{l}
 \langle (-\A )^{-1/2}  \partial_{\xi} \, y , x_N \rangle_{\H} = \langle   \partial_{\xi} y , (- \A)^{-1/2} x_N \rangle_{\H} =  
  \langle \partial_{\xi}^2    y , \partial_{\xi} \, \sum_{k = 1 }^N   
   \frac{x_k}{k}  {\tilde e_k} \rangle_{L^2(0,\pi)}    
  \\ \\
=   - \langle \partial_{\xi}    y , \partial_{\xi}^2 \, \sum_{k = 1 }^N   
   \frac{x_k}{k}  {\tilde e_k} \rangle_{L^2(0,\pi)}
   %= 
  %  - \langle \partial_{\xi}     y , \, \partial_{\xi}  \sum_{k = 1 }^N   
  %   {x_k} \frac{ \cos (k \, \cdot)}{k}  \rangle_{L^2(0,\pi)}
    =
     \langle \partial_{\xi}     y ,   \sum_{k = 1 }^N   
    {x_k} { \sin (k \, \cdot)}  \rangle_{L^2(0,\pi)}. 
    \end{array}
  \end{gather*}   
  Hence $|\langle (-\A )^{-1/2}  \partial_{\xi} \, y , x_N \rangle_{\H}\, |$
  $\le |y|_{\H} \, $ $ (\sum_{k = 1 }^N   
    {x_k}^2)^{1/2}$ $\le |y|_{\H} \,|x|_{\H} $ 
   and 
  we  get the assertion. 
   Let us introduce, for any $x \in \H$,  
   \begin{equation} \label{f02}
   \begin{array}{l}
   F_0 (x) =  \frac{1}{2}  {\A}^{-1/2} \partial_{\xi} [x^2].
   \end{array}
   \end{equation}
   Since the mapping $x \mapsto x^2$ is continuous and locally bounded from  $\H$ into $\H$ (recall that $|x^2|_{\H} = 2 |x \, \partial_{\xi} x|_{L^2(0,\pi)}$)
it is clear that 
\begin{equation} \label{f0}
  F_0: \H \to \H  \;\; \text{is continuous and locally bounded.}
\end{equation}
The mild solution $Y $ of Proposition \ref{dap} with paths in $C([0,\infty); \H)$ verifies, $\P$-a.s.,
\begin{equation}
  \label{mq144}
Y_{t}=e^{t \A }x +\int_{0}^{t}(- \A)^{1/2}e^{(  t-s)  \A}F_0  (
Y_{s})ds + \int_{0}^{t}e^{(  t-s)  \A}d \W_{s};\;\; t \ge 0, 
\end{equation}
 where $\A$ is defined in \eqref{aa2} and we have set $u_0=x \in \H$.
  
  \smallskip 
  {\sl We consider the following  SPDE which includes 
    \eqref{bur} as a special case:}
 %we consider the following  SPDE 
 %in the  mild form:
 %more general form of equation  \eqref{bur}: 
 % with $f$ and $g$   
 %in the abstract form
%   which  we write as
 \begin{equation}
  \label{cheef}
X_t = e^{t \A }x +\int_{0}^{t}(- \A)^{1/2}e^{(  t-s)  \A}F_0  (
X_{s})ds
 + \int_{0}^{t} e^{(  t-s)  \A} B  (
X_{s})ds 
+
\int_{0}^{t}e^{(  t-s)  \A}d \W_{s},
 \end{equation}
$t \ge 0 $. Here  
\begin{equation}\label{nonl}
B: \H \to \H \; \; \text{is continuous and } \;\;  |B(x)|_{\H} \le c_0 +  |x|_{\H}, \;\; x \in \H,
\end{equation}
 for some $c_0 \ge 0.$ In \eqref{bur} we have 
 $B(x) = h(x)   g(| x|_{\H})$, $x \in \H$, and  \eqref{nonl} holds with $c_0 = 0$
 (we only note that  $|B(x)|_{\H}^2 \le \| g\|_{0}^2 \, \int_0^{\pi} |h '(x(\xi)) \cdot\,  \frac{dx}{d \xi}|^2 d \xi $ $\le \| g\|_{0}^2 \, \| h'\|_{0}^2 \, |x|_{\H}^2$ $\le  |x|_{\H}^2$, $x \in \H$).
% by \eqref{er3} we get \eqref{nonl}  with $c_0=0$). 
%In  \eqref{chee}  $\A$ verifies Hypothesis   \ref{d1} with $H = \H$. %Moreover  
%$$
 % The next result in particular 
%  shows  weak well-posedness for the  SPDE   \eqref{cheef}.
  %  in the form \eqref{cheef} assuming \eqref{nonl}. 
  % Thus in particular it covers the initial SPDE \eqref{bur}. 
  
  Condition \eqref{er3} is used to guarantee the bound on $B$ in \eqref{nonl}. This bound is   used to check the  Novikov condition \eqref{novikov} and prove the existence part in the following result.
%  We believe that, changing the proof of (i), avoiding the Girsanov theorem, % %one should be possible to assume only that $B$ grows at most linearly.
\begin{proposition} \label{well1} 
 Let us consider \eqref{cheef} on    
 %a real and separable Hilbert space  
 $\H= H^1_0(0, \pi)$ with 
  $\A$ given in \eqref{aa2} and the cylindrical Wiener process $\W$ on $\H$ given in \eqref{noise} ($(W^{(k)})_{k \ge 1
  }$ are independent real Wiener processes). 
   % $\W$ Moreover the cylindrical Wiener
  % verifies Hypothesis   \ref{d1} with $H = \H$. 
% Let $(\tilde e_k)$ be the  orthonormal basis of eigenfunctions of $\A^{-1}$ %and 
% $\W_t = \sum_{k \ge 1}  W_t^{(k)}  \tilde e_k$ be a cylindrical Wiener %process in $\H$ 
  Let  $F_0$ as  in \eqref{f02} and  suppose that   $B: \H \to \H$   verifies  \eqref{nonl}. 
   Then the following assertions hold.
  
 i) For any $x \in \H$,  there exists a weak mild solution $(X_t)_{t \ge 0 }$.
 %defined on some filtered probability space.    
 
ii)  Weak uniqueness holds for \eqref{cheef} for any $x \in \H.$
 \end{proposition}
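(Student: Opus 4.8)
The plan is to obtain both assertions from what is already known for the drift-free equation, combined with the abstract principle of Theorem~\ref{extension}. For existence I would start from Proposition~\ref{dap}(i): equation \eqref{mq144}, which is \eqref{cheef} with $B\equiv 0$, has a pathwise unique mild solution $Y=(Y_t)_{t\ge 0}$ with continuous paths in $\H$ on the space carrying the cylindrical Wiener process $\W$. The additional zero-order drift is then produced by a Girsanov shift. Fixing $T>0$ and setting $\psi_s=B(Y_s)$ (an $\H$-valued, continuous, adapted process, with $\int_0^T|\psi_s|^2_{\H}\,ds<\infty$ a.s. since $Y$ has continuous paths and $B$ has linear growth), I would introduce the exponential local martingale
\begin{equation*}
\rho_t=\exp\Big(\int_0^t\langle B(Y_s),d\W_s\rangle_{\H}-\tfrac12\int_0^t|B(Y_s)|^2_{\H}\,ds\Big),\qquad t\in[0,T].
\end{equation*}
Once $\rho$ is known to be a genuine martingale, under $d\tilde\P=\rho_T\,d\P$ the process $\tilde\W_t=\W_t-\int_0^tB(Y_s)\,ds$ is a cylindrical Wiener process on $\H$; inserting $d\W_s=d\tilde\W_s+B(Y_s)\,ds$ into the mild identity \eqref{mq144} turns it exactly into \eqref{cheef} for $X=Y$ driven by $\tilde\W$. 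Thus $(Y,\tilde\W)$ is a weak mild solution of \eqref{cheef} on $[0,T]$, and letting $T\to\infty$ together with the consistency of the resulting laws gives a solution on $[0,\infty)$, establishing (i).

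The main obstacle is precisely to verify that $\rho$ is a true martingale, i.e.\ the Novikov condition $\E[\exp(\tfrac12\int_0^T|B(Y_s)|^2_{\H}\,ds)]<\infty$; this is where the growth bound \eqref{nonl} and the exponential estimate \eqref{esp} are used, and the matching of the constant $\tfrac12$ is the delicate point. In the situation of \eqref{bur} one has $c_0=0$ and $|B(x)|_{\H}\le|x|_{\H}$, so $\tfrac12\int_0^T|B(Y_s)|^2_{\H}\,ds\le\tfrac12\int_0^T|Y_s|^2_{\H}\,ds$ and Novikov follows at once from \eqref{esp}, the exponent $\tfrac12$ in \eqref{esp} being exactly what is required. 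For a general $B$ obeying \eqref{nonl} with $c_0>0$ I would reduce to a bounded shift by localizing along the exit times $\sigma_N=\inf\{t\ge 0:|Y_t|_{\H}\ge N\}$: on $[0,\sigma_N]$ the shift $B(Y_{\cdot})$ is bounded by $c_0+N$, so Novikov holds trivially, and since $Y$ has globally continuous $\H$-valued paths one has $\sigma_N\uparrow\infty$, allowing the stopped weak solutions to be pieced together into a solution on $[0,\infty)$.

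For the weak uniqueness (ii) I would recast \eqref{cheef} as an instance of the abstract equation \eqref{sde}. Writing the zero-order term as $e^{(t-s)\A}B(X_s)=(-\A)^{1/2}e^{(t-s)\A}(-\A)^{-1/2}B(X_s)$, equation \eqref{cheef} acquires the mild form \eqref{mqq} with $A=\A$, $W=\W$ and drift coefficient
\begin{equation*}
F:=F_0+(-\A)^{-1/2}B .
\end{equation*}
Since $\A$ verifies Hypothesis~\ref{d1} on $\H$, it remains to check the hypotheses of Theorem~\ref{extension}. For H1), $F_0$ is continuous and bounded on bounded sets by \eqref{f0}, while $(-\A)^{-1/2}\in{\cal L}(\H)$ (its eigenvalues are $1/k\le 1$) and $B$ is continuous with at most linear growth, so $(-\A)^{-1/2}B$ is continuous and bounded on bounded sets as well; hence $F$ is continuous and bounded on bounded sets of $\H$. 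Hypothesis H2) is supplied by part (i). Theorem~\ref{extension} then yields that all weak mild solutions of \eqref{cheef} starting at a fixed $x\in\H$ share the same law on ${\cal B}(C([0,\infty);\H))$, which is exactly assertion (ii).
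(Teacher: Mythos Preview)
Your argument follows the paper's exactly: Girsanov from the Burgers solution $Y$ of Proposition~\ref{dap} (with the exponential estimate \eqref{esp} supplying Novikov and Kolmogorov extension passing to $[0,\infty)$) for (i), then rewriting \eqref{cheef} via $F=F_0+(-\A)^{-1/2}B$ and invoking Theorem~\ref{extension} for (ii). The only variation is that the paper asserts the Novikov bound \eqref{novikov} directly for all $c_0\ge 0$, whereas you single out $c_0>0$ and propose localization along the exit times of $Y$; your identification of the delicate constant $\tfrac12$ is well placed, and otherwise the two proofs coincide.
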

\begin{proof} {\bf i)} Let us fix $x \in \H$. We will use the Girsanov theorem  as in Appendix A.1 of \cite{DFPR}, using the  reference Hilbert space $\H$.
%(see also Remark 3.7, page 303, in \cite{KS}).   

 Let $Y = (Y_t)$ be the unique  solution to the Burgers equation  \eqref{mq144} with values in $\H$ and such that  $Y_0 =x$. This is defined on  a  filtered probability space 
 $(
\Omega, {\mathcal F},$ $
 ({\mathcal F}_{t}), \P )$  
  on which it is defined the
cylindrical Wiener process $\W$ on $\H$.
 Set 
\begin{gather*}
b(s) = B(Y_s),\;\; s \ge 0,
\end{gather*}
and note that $|b(s)|_{\H} \le c_0  +   |Y_s|_{\H}$, $s \ge 0$,   by \eqref{nonl}. The process   $(b(s))$ is  progressively measurable  and verifies $\E \int_0^T
|b(s)|_{\H}^2    ds < \infty $, $T>0$ (see \eqref{esp}  and recall that $e^r \ge  1 + r$). Moreover, by \eqref{esp} and \eqref{nonl}  it follows  that, for any $T>0,$ 
\begin{gather} \label{novikov}
\E  \big [ e^{\frac{1}{2} \int_0^T  |b(s)  |_{\H }^2 ds}\big]  \le \, C_T \, 
 \E  \big [ e^{\frac{1}{2} \int_0^T  | Y_s |_{\H }^2 ds}\big] < \infty.
\end{gather} 
%(this the usual  Novikov condition). 
Let ${U_t = \sum_{k \ge 1} \int_0^t \langle b(s) , \tilde e_k\rangle_{\H}  \, dW^{(k)}_s}$, $t  \ge  0$, and   
    fix $T>0.$ By Proposition 17 in \cite{DFPR} we know that $\tilde W^{(k)}_t = W^{(k)}_t - \int_0^t \langle \tilde e_k, b(s)\rangle_{\H} ds$, $t \in [0,T]$, $k \ge 1,$ are independent real Wiener processes on $(
\Omega, {\mathcal F},$ $
 ({\mathcal F}_{t}), \tilde \P )$, where 
  the probability measure 
 $$
 \tilde \P = e^{ U_T \,  
 - \, \frac{1}{2} \int_0^T  |b(s)  |_{\H }^2 ds} \, \cdot  \P
 $$ is equivalent to $\P$ (the quadratic variation process $\langle U\rangle_t$ $= \int_0^t \big |b(s) \big |_{\H }^2 ds$, $t \in [0,T]$). 
 
 Hence  $\tilde \W_t = \sum_{k \ge 1} \tilde W^{(k)}_t \tilde e_k$, $t \in [0,T]$, is a cylindrical Wiener process on $\H$ defined on $(
\Omega, {\mathcal F},$ $  
 ({\mathcal F}_{t}), \tilde \P )$. 
  %Using the process $Y$ and 
  Arguing  
as in Proposition 21 of \cite{DFPR} we obtain that 
\begin{gather*}
 \begin{array}{l}
Y_t = e^{t \A }x +\int_{0}^{t}(- \A)^{1/2}e^{(  t-s)  \A}F_0  (
Y_{s})ds
 + \int_{0}^{t} e^{(  t-s)  \A} B  (
Y_{s})ds 
+
\int_{0}^{t}e^{(  t-s)  \A}d \tilde \W_{s}, \;\; t \in [0,T],
 \end{array}
\end{gather*}
 $\P$-a.s.. 
 Thus $Y$ a mild solution on $[0,T]$ to \eqref{cheef} defined on $(
\Omega, {\mathcal F},$ $
 ({\mathcal F}_{t}), \tilde \P )$. 
 
 Since $T>0$ is arbitrary, using   a standard procedure based on the Kolmogorov extension theorem, one can prove the existence of a weak mild solution $X$ to \eqref{cheef} on  $[0, \infty)$. On this respect, we refer to  Remark 3.7, page 303, in \cite{KS} (cf. the beginning of Section 4).

 \vskip 2mm 
 \noindent {\bf ii)} We use Theorem \ref{extension} with $H = \H$, $A = \A$ and $W = \W$. 
 Indeed,  \eqref{cheef} can be rewritten as
 \begin{gather*}
 \label{chee}
 \begin{array}{l}
X_t = e^{t \A }x +\int_{0}^{t}(- \A)^{1/2}e^{(  t-s)  \A}F  (
X_{s})ds
 +
\int_{0}^{t}e^{(  t-s)  \A}d \W_{s}, \;\;\; t \ge 0,
\end{array}
\end{gather*}
 where
 $
 F(x) = F_0(x) + (-\A )^{-1/2} B(x),$ $x \in \H
 $. The function $F : \H \to \H$ is continuous and locally bounded (cf. \eqref{f0} and \eqref{nonl}).
\end{proof}

\begin{remark} \label{che} {\rm 
%It is not clear how  to deduce  
Assertion (ii) in Proposition \ref{well1} cannot be deduced directly 
%is not clear if one can 
from  the Girsanov theorem
%. If we try to argue 
%arguing 
as in Appendix A.1 of \cite{DFPR}. To this purpose,
%to get weak uniqueness  we need to establish
%an exponential estimate like  
 one should prove that 
%\begin{equation*}\label{33}
 $\E \Big [ e^{\frac{1}{2} \int_0^T  |B(X_s)  |_{{\mathcal H}}^2 ds}\Big]$ $  < \infty, $
%\end{equation*}
for any weak mild solution $X$  to \eqref{cheef} starting at  $x \in \H$.  A sufficient condition would be   $\E \Big [ e^{\frac{1}{2} \int_0^T  |X_s  |_{{\mathcal H}}^2 ds}\Big] < \infty$. 
  It seems that such estimate does not hold under our assumptions. 
 %On the other hand,
 %Note that 
% in contract to 
%\cite{DFPR}
Note that since the nonlinearity of the Burgers equation grows quadratically 
 one cannot follow the proof of Proposition 22  of \cite{DFPR} to derive %\eqref{33}. 
  $\E \Big [ e^{\frac{1}{2} \int_0^T  |X_s  |_{{\mathcal H}}^2 ds}\Big] < \infty$.
 }
  \end{remark}
  }

\newpage
\appendix

%%%%%%%%%%%%%%%%%%%%%%%%%%%%%%%
\section{ A further regularity result  on  the Kolmogorov equation }

When $z=0$ one can show that   $Du$ (see \eqref{s55}) belongs to the so-called  Zygmund ${\cal C}^1$-space (see \cite{CL} and \cite{LR}).
%. It is known that this last result can also be obtained by %interpolation theory  
%(see...).  
 We will provide an alternative proof which  is inspired by Lemma 2.2 in \cite{bass}. 
 The Zygmund regularity will follow by Theorem \ref{sr}, taking into account the estimate 
 \begin{equation*}
%\sup_{x \in H}
\| D^2 P_t^{(z)}\varphi \|_0
  \le
C t^{-1}\|\varphi\|_0,\;\; t >0,\; \varphi \in B_b(H).
\end{equation*}
(see Section 1.2). 
 Let $E$ be a separable Hilbert space.   The {\sl Zygmund space}  ${ \cal C}^1 (H,E)$ is the space of  all continuous    and bounded   function $f: H \to E$, i.e., $f \in C_b (H,E)$,    such that   
\begin{equation}\label{zy} 
[f]_{{\cal C}^1}= \sup_{x,h \in H,\; h \not =0,\; |h|\le 1} \frac{|f(x+h) - 2 f(x) + f (x-h)|_E}{|h|_E } < \infty.
 \end{equation}
This  is a Banach space  endowed with the norm $\| f\|_{{\cal C}^1}
 =  [f]_{{\cal C}^1} + \| f\|_{0}$, $f \in {\cal C}^1 (H,E)$. As usual we set ${\cal C}^1(H) = {\cal C}^1(H, \R)$. 
 %Finally ${\cal C}^2(H) $ is the space of all functions $f \in % 
 % {C}%^1_b(H)$ such that $Df \in {\cal C}^1(H,H)$.

\begin{lemma}\label{es} Let us consider a semigroup of linear contractions 
$(R_t)$, $R_t : C_b (H) \to C_b (H)$, $t\ge 0,$ such that $R_t (C_b(H)) \subset C^{2}_b(H)$, $t>0$, and there exists $C_0 >0$ such that 
\begin{equation}
%\sup_{x \in H}
%| D^2 R_t\varphi(x)|
%  = 
 \| D^2 R_t\varphi \|_0 \le
C_0 t^{-1}\|\varphi\|_0,\;\; t \in (0,1],\; \varphi \in C_b(H).
\end{equation}
 Let $f \in C_b(H)$.  If  there exists a constant $N >0$ such that  
\begin{gather} \label{sw21}
 \sup_{x \in H}| R_tf(x) - f(x) | \, =\,
\|R_tf  - f  \|_0 \le N\,  t^{1/2}, \;\; t \in [0,1].
\end{gather}
Then $f \in {\cal C}^{1}(H)$.
Moreover, $ [f]_{{\cal C}^1}\le  16 (C_0+1)\,  (N + \|f \|_0)$.
\end{lemma}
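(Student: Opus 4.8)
The plan is to reduce everything to a uniform bound on the second difference $\Delta_h^2 f(x):=f(x+h)-2f(x)+f(x-h)$ by a constant times $|h|$, for $0<|h|\le 1$. I would first record the two elementary estimates that drive the whole argument. For any $g\in C_b(H)$ one has the crude bound $|\Delta_h^2 g(x)|\le 4\|g\|_0$; for $g\in C^2_b(H)$ a second-order Taylor expansion with integral remainder gives $|\Delta_h^2 g(x)|\le \|D^2 g\|_0\,|h|^2$. Applied to $g=R_t\varphi$, the smoothing hypothesis yields $|\Delta_h^2 R_t\varphi(x)|\le C_0\, t^{-1}\|\varphi\|_0\,|h|^2$ for $t\in(0,1]$.

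The naive single-scale splitting $\Delta_h^2 f=\Delta_h^2(f-R_t f)+\Delta_h^2 R_t f$ with the choice $t=|h|^2$ produces the bound $4N|h|+C_0\|f\|_0$, whose second summand does not decay with $|h|$; this is precisely the crux of the proof and the reason a more refined decomposition is needed. I would instead use the telescoping decomposition coming from the strong continuity $R_{2^{-n}}f\to f$ (guaranteed by \eqref{sw21}), writing $f=R_1 f+\sum_{n\ge 0}\big(R_{2^{-(n+1)}}f-R_{2^{-n}}f\big)$. By the semigroup law $R_{2^{-n}}=R_{2^{-(n+1)}}\circ R_{2^{-(n+1)}}$ each increment becomes $R_{2^{-(n+1)}}f-R_{2^{-n}}f=R_{2^{-(n+1)}}g_n$ with $g_n:=f-R_{2^{-(n+1)}}f$, and $\|g_n\|_0\le N\,2^{-(n+1)/2}$ by \eqref{sw21}.

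Applying $\Delta_h^2$ termwise (legitimate since the telescoping series converges uniformly), I would estimate $\Delta_h^2 R_{2^{-(n+1)}}g_n$ in two ways. The contraction together with the crude bound gives $|\Delta_h^2 R_{2^{-(n+1)}}g_n|\le 4\|g_n\|_0\le 4N\,2^{-(n+1)/2}$, which is effective for large $n$ (small scales); the $C^2$-bound gives $|\Delta_h^2 R_{2^{-(n+1)}}g_n|\le C_0\,2^{n+1}\|g_n\|_0\,|h|^2\le C_0 N\,2^{(n+1)/2}\,|h|^2$, which is effective for small $n$. I would then split the series at the crossover index $n\approx\log_2(1/|h|^2)$, where the two per-term bounds balance, and sum each half as a geometric series dominated by its largest term; both halves come out $O(N|h|)$ because $2^{-(n+1)/2}\approx|h|$ at the crossover. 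The base term is controlled by $|\Delta_h^2 R_1 f|\le C_0\|f\|_0\,|h|^2\le C_0\|f\|_0\,|h|$, using $|h|\le 1$.

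Collecting the three contributions gives $|\Delta_h^2 f(x)|\le C\,(C_0+1)(N+\|f\|_0)\,|h|$ uniformly in $x$ and in $0<|h|\le 1$, whence $f\in{\cal C}^1(H)$; tracking the geometric-series constants (factors of the form $(1-2^{-1/2})^{-1}$) yields the stated bound $[f]_{{\cal C}^1}\le 16\,(C_0+1)(N+\|f\|_0)$. The only genuine obstacle is the one flagged above: a single comparison scale cannot eliminate the $C_0\|f\|_0$ term, and it is the dyadic telescoping combined with the two-sided per-term estimate and the crossover split that restores the correct homogeneity in $|h|$.
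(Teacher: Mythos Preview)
Your argument is correct. Both your proof and the paper's rely on the same two ingredients (the $D^2$ smoothing estimate and the approximation hypothesis \eqref{sw21}) combined via a dyadic telescoping and the semigroup identity, but the organization differs. The paper first establishes the intermediate estimate $\|D^2 R_t f\|_0\le 16\,C_0(N+\|f\|_0)\,t^{-1/2}$ for $t\in(0,1]$ by telescoping $D^2\hat R_t f$ (with $\hat R_t:=e^{-t}R_t$, the damping ensuring $D^2\hat R_{t2^k}f\to0$) along the increasing scales $t2^k$, $k\to\infty$; once this improved derivative bound is in hand, the single-scale split $f=(f-R_tf)+R_tf$ at $t=|h|^2$, which you rightly flagged as insufficient on its own, now succeeds. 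Your approach bypasses this intermediate result: you telescope $f$ itself from scale $1$ down to $0$ and estimate $\Delta_h^2$ of each increment by whichever of the two per-term bounds is sharper, splitting the sum at the crossover scale. The paper's route has the advantage of isolating $\|D^2 R_t f\|_0=O(t^{-1/2})$ as a statement of independent interest; yours is slightly more direct and avoids introducing the auxiliary damped semigroup $\hat R_t$.
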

\begin{proof} {\it I Step.} {\sl We introduce the semigroup $(\hat R_t)$,
 $
 \hat R_t = e^{-t} R_t,\;\;\; t \ge 0.
 $ and 
 prove that  }
 \begin{gather} \label{sw2}
 \| D^2 \hat R_t f \|_0 \le \frac{4 C_0\, (N + \|f \|_0) }{\sqrt{t}},\;\; t \in (0,1/2].
\end{gather}
 First note that 
 $|\hat R_t f(x)  - R_t f(x) + R_t f(x) - f(x) | $ $\le |1- e^{-t}|\| R_t f \|_0  + N t^{1/2}$, $t \in [0,1]$, and so (using also that $\|\hat R_tf  - f  \|_0 \le 2 \| f\|_0$)
 \begin{gather*}
\|\hat R_tf  - f  \|_0 \le \, (N + 2 \| f\|_0) \,  t^{1/2}, \;\; t \ge 0.
\end{gather*}
Let now $\varphi \in  C_b(H)$, $t>1$. We can write $D^2 \hat 
R_t \varphi = D^2 \hat R_1 \hat R_{t-1} \varphi$ and so 
\begin{gather*}
\| D^2 \hat R_t \varphi \|_0 \le C_0 e^{-t} \|\varphi\|_0,\;\; t>1.
\end{gather*}
It follows that,  for any $\varphi \in  C_b(H)$, 
\begin{equation}
\label{e611}  \| D^2 \hat R_t \varphi \|_0 \le
C_0 t^{-1}\|\varphi\|_0,\;\; t >0.
\end{equation}
Now by \eqref{e611} we obtain \eqref{sw2} as follows.

%using an  argument similar to the one used  in the proof of 

Let $x \in H$. Let  $k \ge 0$ be an integer and fix $t \in (0,1/2]$.  Using the semigroup law and \eqref{e611},  we write
\begin{gather*} 
 \|D^2 \hat R_{t 2^{k+1}}f(x) -  D^2 \hat R_{t 2^{k}} f(x ) \|_{\cal L}  
=
\| D^2 \hat R_{t 2^{k} + t 2^{k}}f(x) -  D^2 \hat R_{t 2^{k}} f(x ) \|_{\cal L}   
\\
= \|D^2 \hat R_{t 2^{k}}
 [ \hat R_{t 2^k} f -   f](x )\|_{\cal L}  
 \\
 \le \frac{C_0}{t 2^{k}} \|  \hat R_{t 2^k} f -   f\|_0 \le \frac{C_0 (N + 2 \| f\|_0)\, t^{1/2} {2^{k/2}} }{ 2^{k}t} 
  = \frac{C_0(N + 2 \| f\|_0)}{t^{1/2} \,{2^{k/2}} }.
\end{gather*}
Now $D^2 \hat R_{t 2^{N+1}}f(x) - D^2 \hat R_{t }f(x) =  \sum_{k = 0}^{N}
[ D^2 \hat R_{t 2^{k+1}} f(x)  - D^2 \hat R_{t 2^{k}} f(x ) ]$.
 Since we know by \eqref{e611} that $\lim_{N \to \infty} D^2 \hat R_{t 2^{N+1}}f(x) =0$, for any $x \in H$, we obtain
\begin{gather*}
  D^2 \hat R_{t} f(x) =
   \sum_{k \ge 0}
[D^2 \hat R_{t 2^{k}} f(x ) - D^2 \hat R_{t 2^{k+1}} f(x)   ], \;\; x \in H,
\end{gather*}
and we deduce \eqref{sw2} since
$ 
\sup_{x \in H}\| D^2 \hat R_{t} f(x) \|_{\cal L}  $ $ \le \frac{C_0(N + 2 \| f\|_0)}{t^{1/2} } \sum_{k \ge 0} \frac{1}{2^{k/2} }.
$
 Formula \eqref{sw2} implies, for any $t \in (0,1]$, 
\begin{gather} \label{sw}
 \| D^2 R_t f \|_0 \le \, 16 C_0 (N + \|f \|_0 ) \, t^{-1/2},\;\;\;\;  t \in (0,1].
\end{gather}
 {\it II Step. } {\sl Let us  check that $f \in {\cal C}^1 (H)$ using \eqref{sw}. } 
 \\
 Fix $h \in H$ with $|h|_H \le 1$ and  set $t = |h|^2_H$. We write
 $
f $ $= [f- R_t f] + R_t f$ $  = l_t + g_t.  
$
Since $ \| l_t\|_0 = \|f- R_t f \|_0 \le N |h|_H$ we consider $g_t$. 
Setting 
$$
\triangle_h f(x) =  f(x+h) - 2 f(x) + f (x-h),
$$
we get $
 \triangle_h f (x) = \triangle_h l_t(x) +  \triangle_h g_t(x)
$
 and $\| \triangle_h l_t \|_0 \le 4 N |h|_H.$ By the Taylor formula and \eqref{sw} we find 
\begin{gather*}
 | \triangle_h g_t(x) |_E \le \| D^2 R_t f \|_0 \, |h|^2_H \le \frac{16 C_0 
(N + \|f \|_0)
}{|h|_H} |h|^2_H = 16 C_0 (N + \|f \|_0)|h|_H. 
\end{gather*}
Hence $[f]_{{\cal C}^1} \le  16 (C_0+1)\, (N + \|f \|_0)$.  
%Combining the previous estimates we get the assertion.     
\end{proof} 
%  ???
%The previous lemma  holds also for ??? $f \in C^{\theta}_b (H,E)$, $%\theta \in (0,1)$, showing that $\|P_t^{(z)} f  - f  \|_0$ $ \le N\,  %t^{\theta/2},$  $ t \in (0,1],$  implies $f \in C^{\theta}_b(H,K)$. 

Combining the previous lemma and Theorem \ref{sr}  we find that
  $Du^{(z)} \in {\cal C}^1(H,H)$ with a bound on  $[ Du^{(z)} ]_{{\cal C}^1}$ independent of $z$. 
\begin{theorem}\label{d} Let $f \in B_b(H)$ and consider $ Du^{(z)}$ given  in \eqref{s55}. Then with 
 $
  c_1 = 16 [C_1^2 +1]  [ C_2 +1 ] $ ($C_1$ and $C_2$ are the same of  Theorem \ref{sr})    we have 
  \begin{equation} \label{s2}
|Du^{(z)}(x+k) - 2 Du^{(z)}(x) + Du^{(z)}(x-k)|_H \le  c_1 \,   \|f \|_{0}  
 ,\;\; x \in H,\; k \in H,\; |k| \le 1,
\end{equation}
\end{theorem}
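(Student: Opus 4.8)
The plan is to deduce \eqref{s2} from the abstract bootstrap Lemma \ref{es}, applied not to $u^{(z)}$ itself but to its scalar directional derivatives. For a fixed direction $h\in H$ set $g_h:=D_h u^{(z)}=\langle Du^{(z)}(\cdot),h\rangle$. By Remark \ref{ss} we have $u^{(z)}\in C^1_b(H)$, so $g_h\in C_b(H)$, and the semigroup $R_t:=P_t^{(z)}$ is a semigroup of linear contractions with $P_t^{(z)}(C_b(H))\subset C_b^2(H)$ for $t>0$ (Section 1.2). I would run Lemma \ref{es} with this $R_t$ and with the function $f$ there replaced by $g_h$, and then recover the $H$-valued second difference of $Du^{(z)}$ by taking a supremum over unit directions $h$.

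First I would verify the two hypotheses of Lemma \ref{es}. The required $C^2$-smoothing bound $\|D^2 P_t^{(z)}\varphi\|_0\le C_0\,t^{-1}\|\varphi\|_0$ for $\varphi\in C_b(H)$ is exactly the second estimate in \eqref{wdc}, with $C_0=\sqrt2\,C_1^2$ (where $C_1$ is from \eqref{e5}). For the $1/2$-Hölder time-increment hypothesis \eqref{sw21} I would use that $P_t^{(z)}$ commutes with the inner product against the fixed vector $h$, since it is an average over a probability measure; thus $\langle P_t^{(z)}Du^{(z)}(x),h\rangle=P_t^{(z)}g_h(x)$. Hence $\|P_t^{(z)}g_h-g_h\|_0=\sup_{x}\big|\langle P_t^{(z)}Du^{(z)}(x)-Du^{(z)}(x),\,h\rangle\big|\le |h|\,\|P_t^{(z)}Du^{(z)}-Du^{(z)}\|_0\le C_2\,|h|\,t^{1/2}\,\|f\|_0$ by Theorem \ref{sr}, so \eqref{sw21} holds with $N=C_2|h|\,\|f\|_0$.

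With both hypotheses in hand, Lemma \ref{es} yields $[g_h]_{\mathcal{C}^1}\le 16(C_0+1)\big(N+\|g_h\|_0\big)$. For $|h|\le1$ one has $N\le C_2\|f\|_0$ and $\|g_h\|_0\le\|Du^{(z)}\|_0\le\|f\|_0$, so the right-hand side is bounded by $c_1\|f\|_0$ uniformly in $h$. Finally, since $|Du^{(z)}(x+k)-2Du^{(z)}(x)+Du^{(z)}(x-k)|_H=\sup_{|h|\le1}\big|\langle Du^{(z)}(x+k)-2Du^{(z)}(x)+Du^{(z)}(x-k),\,h\rangle\big|$ and the inner product equals $g_h(x+k)-2g_h(x)+g_h(x-k)$, taking $\sup_{|h|\le1}$ of the $\mathcal{C}^1$-seminorm bound for $g_h$ gives $[Du^{(z)}]_{\mathcal{C}^1}\le c_1\|f\|_0$; specializing to $|k|\le1$ yields \eqref{s2}. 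Equivalently, one may observe that the proof of Lemma \ref{es} carries over verbatim to $E$-valued functions and apply it directly to $Du^{(z)}$, using the vector-valued forms of \eqref{wdc} and Theorem \ref{sr}.

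The essential content of the argument is not in this appendix step but in Theorem \ref{sr}: the whole proof rests on the $1/2$-Hölder time regularity $\|P_s^{(z)}Du^{(z)}-Du^{(z)}\|_0\le C_2 s^{1/2}\|f\|_0$, itself the delicate consequence of Lemma \ref{ss1}. Given that estimate, the only real work here is the abstract bootstrap inside Lemma \ref{es}, which converts the $t^{-1}$ blow-up of $\|D^2 P_t^{(z)}\|$ together with the $s^{1/2}$ time modulus into the $t^{-1/2}$ bound matching the Zygmund scaling. The point to handle with care is the passage between the vector-valued estimate of Theorem \ref{sr} and the scalar Lemma \ref{es}, which is exactly why I work with the directional derivatives $g_h$ and take a supremum over $|h|\le1$ at the end.
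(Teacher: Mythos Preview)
Your proposal is correct and follows essentially the same approach as the paper: fix a direction $h$, set $g_h=\langle Du^{(z)}(\cdot),h\rangle$, feed $g_h$ into Lemma \ref{es} with $R_t=P_t^{(z)}$ using the $t^{-1}$ second-derivative bound \eqref{wdc} and the $s^{1/2}$ time modulus from Theorem \ref{sr}, and then take the supremum over $|h|\le 1$. The only minor slip is the claim $\|Du^{(z)}\|_0\le\|f\|_0$, which in general carries a $\lambda$-dependent constant (cf.\ Remark \ref{ss}); the paper's own constant tracking is equally loose here, so this does not affect the validity of the argument.
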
 
\begin{proof} Recall that   by \eqref{wdc}
 %\begin{gather*}
  $  \sup_{x \in H}
| D^2 P_t^{(z)}\varphi(x)|
 = \| D^2 P_t^{(z)}\varphi \|_0 \le \sqrt{2}
C_1^2 \,  t^{-1}\|\varphi\|_0,$ $ t \in (0,1],$ $ \varphi \in C_b(H)$
 %
 %\end{gather*}
  (the estimates holds more generally for any $t>0$).   
  We know by Theorem  \ref{sr}     that   
 \begin{gather*} 
  %\label{23}
% |\langle {P_s^{(z)}}  D u^{(z)}(x), h \rangle   - \langle D u^{(z)}(x ), %h \rangle | \\  = 
|{P_s^{(z)}} (\langle D u^{(z)}(\cdot ), h \rangle)(x)  - \langle D u^{(z)}(x ), h \rangle |  \le C_2 |h| s^{1/2} \|f \|_{0}.  
\end{gather*} 
We obtain by Lemma  \ref{es}, for any $k \in H$ with $|k|_H \le 1$,
\begin{gather*}
|\langle D u^{(z)}(x+k) - 2 Du^{(z)}(x) + Du^{(z)}(x-k), h \rangle | \le c_1  \, |h|_H \|f \|_{0}. 
 \end{gather*}
After taking the supremum over $\{ |h|_H \le 1\}$ we obtain the assertion.
  \end{proof}
  
% VIA ??
%\begin{proposition}\label{modu}
%Let $g \in {\cal C}^1 (H, E)$, where $E$ is a separable  Banach %space.  Consider 
%the modulus of continuity $\omega_g$ of $g$ (i.e., $\omega_g (r) = %%\sup_{|x-y| \le r} |g(x) - g(y)|_E$, $r \ge 0$). Then there exists $C>0$ such that 
%\begin{equation}\label{m1}
% \omega_g(r) \le C r \, |\log(r)|,\;\; r \in [0,1].
%\end{equation}
%\end{proposition} 
%We provide the proof in Appendix.   ????
 
\begin{remark} \label{dai} {\em Let us consider the Ornstein-Uhlenbeck semigroup $(P_t)$ (i.e., $(P_t^{(z)})$ when $z=0$). One may ask   if a kind of  converse of Lemma \ref{es} holds. In other word if  $g$ belongs  to the Zygmund space  $ {\cal C}^{1}(H)$ we may ask if  $g$ verifies
\begin{equation}\label{magari}
\sup_{s \in (0,1)} \,  s^{-1/2}  \| P_s g - g \|_0 \,  < \infty. 
\end{equation}
 Arguing as in the proof of    Lemma 3.6 and Proposition 3.7 of \cite{DL} (considering  $\theta =1/2$ in such results) one can prove that if  $g \in {\cal C}^{1}(H)$ and in addition we have
\begin{equation}\label{ma}
\sup_{s \in (0,1)}\,   s^{-1/2} \|  g(e^{sA} (\cdot) ) - g\|_0  < \infty 
\end{equation}
then \eqref{magari} holds.
We point out that in infinite dimensions  under Hypotheses \ref{base} it is not clear if \eqref{ma} holds when $g$ is replaced by the derivative $Du $  ($Du$ is given in  \eqref{s55} with $z=0$ and $f \in B_b(H)$). 
%It seems that such result cannot be obtained by Theorem \eqref{}
    }
 \end{remark}

 \newpage 
 
 \noindent \textbf{Acknowledgement.}
The author would like to thank the anonymous  referees
 for their
 useful  comments and suggestions.

\end{document}